%  LaTeX support: latex@mdpi.com 
%  For support, please attach all files needed for compiling as well as the log file, and specify your operating system, LaTeX version, and LaTeX editor.
\documentclass{amsart}
\usepackage[a4paper,top=4cm,bottom=4cm,left=2.5cm,right=2.5cm,bindingoffset=5mm]{geometry}

\usepackage{amssymb,bm,amsmath,amsthm}
\usepackage{graphicx,mathtools,mathrsfs,subfig,hyperref}

\def\RR{\mathbb R}

\def\dt{\Delta t}
\def\nt{N_{t}}
\newtheorem{Theorem}{Theorem}
\newtheorem{Proposition}{Proposition}
\newtheorem{Lemma}{Lemma}

% TITOLO SEC, SUBSEC, PARAGRAPH
%\usepackage{titlesec}
%\let\oldparagraph=\paragraph

\makeatletter
\@namedef{subjclassname@2020}{\textup{2020} Mathematics Subject Classification}
\makeatother

% TITOLO E AUTORI

\title{Effects of fractional derivatives in epidemic models}
%\author{\vspace{-0.5cm}}
%\date{\vspace{-1cm}}

\author[C. Balzotti, M. D'Ovidio, A.C. Lai, P. Loreti]{Caterina Balzotti$^1$\email{c.balzotti@iac.cnr.it}, Mirko D'Ovidio$^2$\email{mirko.dovidio@uniroma1.it}, Anna Chiara Lai$^2$\email{annachiara.lai@uniroma1.it}, Paola Loreti$^2$\email{paola.loreti@uniroma1.it}}
% Affiliations / Addresses (Add [1] after \address if there is only one affiliation.)
 
\keywords{Caputo derivative; Caputo-Fabrizio operator; SIS models} 

\subjclass[2020]{26A33; 34A34}
\begin{document}
\maketitle
{\centering{%
\small $^{1}$   Istituto per le Applicazioni del Calcolo,\\ Consiglio Nazionale delle Ricerche, Rome, Italy; }\\
{\small $^{2}$  \small 
Dipartimento %MDPI: we combined three same aff into one, please check.
 di Scienze di Base e Applicate per l'Ingegneria, \\Sapienza Università di {Roma}, Rome, Italy\\}}

\begin{abstract}{We study  epidemic Susceptible-Infected-Susceptible  models in the fractional setting. The novelty is to consider models in which the susceptible and infected populations evolve according to different fractional orders. We study a model based on Caputo derivative, for which we establish existence results of the solutions. Also, we investigate a model based on Caputo-Fabrizio operator, for which we provide existence of solutions and a study of the equilibria. Numerical simulations for both models and a direct numerical comparison are also provided. 
}\end{abstract}

\section{Introduction}
The interest of the scientific community in mathematical modeling for epidemiology has grown considerably in recent years.
 The study of epidemic models began in the early 1900s with the pioneering work of Kermack and McKendrick \cite{KerMcK}. Their idea was to divide the population into groups which distinguish the individuals based on their status with respect to the infection, giving rise to the compartmental modeling for epidemics. The evolution in time of the disease is then described by a system of ordinary differential equations for each considered class.

In this work we focus on the SIS (Susceptible-Infected-Susceptible) model \cite{Hethcote}, describing infections which do not confer immunity to recovery from illness, such as influenza and common cold. 
%We refer to \cite{KerMcK,Hethcote} for the Susceptible-infected-Susceptible (SIS) model. 
Such theory describes compartmental models, where the population is divided into groups depending on the state of individuals, that is with respect to disease, distinguishing infected, susceptible. The use of mathematical models for epidemiology is useful to predict the behavior of an infection and take strategic decisions in emergency situations to limit the spread of the disease which is microscopically modeled by the fractional order of the derivative.

In recent years, the use of fractional derivatives for epidemic models has grown widely. The main advantage of fractional calculus is that it can incorporate memory effects into the model. Moreover, fractional models have an extra degree of freedom compared to classical models, which is particularly useful for fitting real data when available. We refer to \cite{chen2021AMM} for a recent review of fractional epidemic models.

In this paper we consider two fractional SIS models. One model is based on Caputo derivative, for which we establish existence results of the solutions and provide numerical simulations. The novelty is to allow the susceptible and infected population evolve according to different fractional orders. The other model is based on Caputo-Fabrizio operator. Here we let the susceptible population evolve according to the Caputo-Fabrizio fractional operator, whereas the infected population dynamics is based on ordinary differential equation. In this case, we rewrite the system as a system of ordinary differential equations, we study the equilibria and present some simulations. 

More precisely, let $\alpha, \alpha_1, \alpha_2 \in (0,1)$. We consider the initial-value problem for Caputo derivative with different orders
\begin{equation}\label{modello3I}
	\begin{dcases}
		{^C D}^{\alpha_{1}}_{t} S(t) = -\beta\frac{S(t)}{S(t)+I(t)}I(t)+\gamma I(t)\\
		{^C D}^{\alpha_{2}}_{t} I(t) = \beta\frac{S(t)}{S(t)+I(t)}I(t)-\gamma I(t)\\
		S(0)=S_0\\
I(0)=I_0.
	\end{dcases}
\end{equation}
and the initial-value problem for Caputo-Fabrizio operator
\begin{equation}\label{SIS-CF}
\begin{cases}
{^{CF} D}^\alpha_t S= -\left(\frac{\beta}{S+I} S-\gamma\right)I\\
I'= \left(\frac{\beta}{S+I} S-\gamma\right)I\\
S(0)=S_0\\
I(0)=I_0.
\end{cases}
\end{equation}
where $\beta,\gamma >0$, $S_0,I_0\geq 0$.

Here, in the formula \eqref{modello3I} above we denote the Caputo derivative \cite{Caputo} by 
\begin{align*}
{^C D}^{\alpha_i}_t u(t) = \frac{1}{\Gamma(1-\alpha_i)} \int_0^t \frac{d u}{d s}(s)\, (t-s)^{-\alpha_i}\, ds, \quad i=1,2
\end{align*}
where $\Gamma(\cdot)$ is the gamma function defined as
\begin{align*}
\Gamma(z) = \int_0^\infty e^{-s} s^{z-1}\, ds, \quad z>0.
\end{align*}
%Note that the Caputo derivative satisfies 
%\begin{align*}
%{^C D}^{\alpha_i}_t u(t) = J^{1-\alpha_i} \frac{d u}{dt} (t), \quad i=1,2
%\end{align*}
%where
%\begin{align*}
%J^{\alpha_i} f(t) =  \frac{1}{\Gamma(\alpha_i)} \int_0^t (t-\tau)^{\alpha_i -1} f(\tau)\, d\tau , \quad t>0, \quad i=1,2
%\end{align*}
%is the Riemann-Liouville fractional integral of $f \in L^1$. 
The Caputo derivative is well-defined for a function 
\begin{align*}
u \in C(0, T),\, u^\prime(s) (t-s)^{-\alpha_i} \in L^1((0, t)), \quad \forall t \in (0, T), \quad i=1,2.
\end{align*}

The Caputo-Fabrizio operator in \eqref{SIS-CF} is defined by
\begin{align*}
{^{CF}D}^\alpha_t u(t) = \frac{M(\alpha)}{1-\alpha}\int_a^t u'(\tau) e^{-\frac{\alpha}{1-\alpha}(t-\tau)}d\tau
\end{align*}
where $M(\alpha)$ is a non-negative scaling factor satisfying $M(0)=M(1)=1$

The case $\alpha_1=\alpha_2=1$ in \eqref{modello3I} has been extensively studied in the literature. Beginning with the models introduced in \cite{v1,v2,v3} in which the logistic equation is used to model population dynamics, a numerous researchers work in the field.   

The problem to find a solution to the fractional logistic
equation attracted many authors and many works on this topic have been written only providing approximations for that solution. A contribution to this discussion is given in \cite{DovLor18} based on a series representation of the solution which involves Euler’s
numbers. However, this approach is not applicable in this new context since it is based on the following property of the Caputo derivative 
\begin{align*}
\label{propertyCD}
{^C D}^{\alpha}_t u(t) = 0 \quad \textrm{if and only if} \quad \ u(t)=const \quad \forall t \geq 0, \quad \alpha \in (0,1).
\end{align*}
Indeed, our effort is to study the fractional indices of derivation in which $\alpha_1$ may differ from $\alpha_2$. We mainly study existence of solution and we propose numerical test based on the method presented in \cite{giga2019AA} that we illustrate by several pictures. The numerical tests are in agreement with the theory developed in the present paper, and they offer new perspectives (e.g. symmetries emerging in the evolution of the total population $N(t)=I(t)+S(t)$) for future works.

This is not only a challenging problem from the mathematical point of view since it is of interest in many applications in which we may act in different ways in order to slowdown the process.

Recently the Caputo-Fabrizio operator \cite{CapFab} has attracted many researchers. The peculiarity of this fractional operator is the presence of non-singular kernel in contrast with the Caputo derivative in which singular kernel appears in the definition. However, the Caputo-Fabrizio operator can be used to model processes with memory.

Our work is to consider \eqref{SIS-CF} in which we are able to reduce the problem to an ordinary differential equation. Here we obtain the solution of the differential equation and their equilibria. The analysis is concluded by numerical simulations. 
{Finally, in order to point out the effects of the Caputo and the Caputo-Fabrizio differential operators,  we show a direct comparison between the evolutions of the numerical solutions to the system \eqref{modello3I} and the system \eqref{SIS-CF}. }

 We conclude with the plan of the paper. In Section \ref{s2} we introduce the SIS model with fractional Caputo derivatives with different orders. In Section \ref{s21}, we investigate the existence of solutions. To this end, we adopt a constructive approach whose ideas are borrowed from the Carathéodory existence theorem for ordinary differential equations, see \cite[Chapter 2]{ODE}. In Section \ref{sec:CCnum} we complete the study with some numerical simulation. In Section \ref{s3} we address a SIS model with fractional Caputo-Fabrizio operator. In Section \ref{s31} we establish existence of solution by rewriting the system as a system of ordinary differential equations, and we characterize the associated equilibria. Section \ref{s32} is devoted to numerical simulations, also including numerical tests directly comparing the proposed Caputo and Caputo-Fabrizio models. Finally in Section \ref{s4} we draw our conclusions.

\section{Caputo fractional epidemic models}\label{s2}
{
In this section we investigate the properties of the Caputo fractional SIS model with different fractional orders \eqref{modello3I}, which we rewrite here for reader's convenience. 
\begin{equation}\label{modello3}
	\begin{dcases}
		{^C D}^{\alpha_{1}}_{t} S(t) = -\beta\frac{S(t)}{S(t)+I(t)}I(t)+\gamma I(t)\\
		{^C D}^{\alpha_{2}}_{t} I(t) = \beta\frac{S(t)}{S(t)+I(t)}I(t)-\gamma I(t)\\
		S(0)=S_0\\
I(0)=I_0.
	\end{dcases}
\end{equation}
%with $\alpha_1,\alpha_2\in[0,1]$. The use of different fractional orders has been already proposed in \cite{li2019NA}, where the authors study an inverse problem to calibrate the parameters for the dengue fever. The  results obtained fit well the real data, suggesting that mixed order fractional epidemic models are needed in applications.
%
%In this section we investigate the properties of a Caputo fractional SIS model with different fractional orders, i.e.
%\begin{equation}\label{modello3}
%	\begin{dcases}
%		\caputo^{\alpha_{1}}_{t} S(t) = -\beta\frac{S(t)}{S(t)+I(t)}I(t)+\gamma I(t)\\
%		\caputo^{\alpha_{2}}_{t} I(t) = \beta\frac{S(t)}{S(t)+I(t)}I(t)-\gamma I(t).
%	\end{dcases}
%\end{equation}
with $\alpha_1,\alpha_2\in[0,1]$. The particular case of fractional SIS models of Caputo type with $\alpha_{1}=\alpha_{2}$ has been studied in several works, see e.g.\  \cite{balzotti2020FF, elsaka2013MSL, hassouna2018CSF} and references therein. The novelty of our work is the use of different fractional indices. This approach has been already proposed in \cite{li2019NA}, where the authors study an inverse problem to calibrate the parameters for the dengue fever. The  results obtained fit well the real data, suggesting that mixed order fractional epidemic models are needed in applications.	
}
%
%\textcolor{blue}{ANNA: spostare questa parte rossa in intro?}
%\cate
%{The interest of the scientific community in mathematical modeling for epidemiology has grown considerably in recent years. The study of epidemic models began in the early 1900s with the pioneering work of Kermack and McKendrick \cite{KerMcK}. Their idea was to divide the population into groups which distinguish the individuals based on their status with respect to the infection, giving rise to the compartmental modeling for epidemics. The evolution in time of the disease is then described by a system of ordinary differential equations (briefly ODEs) for each considered class. 
%In this work we focus on the SIS (Susceptible-Infected-Susceptible) model \cite{Hethcote}, which describes infections which do not confer immunity to recovery from illness, such as influenza and common cold. 
%
%In recent years, the use of fractional derivatives for epidemic models has grown widely. The main advantage of fractional calculus is that it can incorporate memory effects into the model. Moreover, fractional models have an extra degree of freedom compared to classical models, which is particularly useful for fitting real data when available. We refer to \cite{balzotti2020FF, chen2021AMM, doungmo2015ADE, elsaka2013MSL, hassouna2018CSF} and references therein for examples of fractional epidemic models.}

 Here we establish existence results for the solutions of \eqref{modello3} and we present some related numerical simulations.	
	
We introduce the notation 
$$f(x,y):=	\left(\gamma-\beta \frac{x}{x+y}\right)y$$
so that above system \eqref{modello3} rewrites
	$$\begin{dcases}
		{^C D}^{\alpha_{1}}_{t} S(t) = f(S(t), I(t))\\
		{^C D}^{\alpha_{2}}_{t} I(t) = -f(S(t),I(t))\\
		S(0)=S_0\\
I(0)=I_0.
	\end{dcases}$$
\subsection{Solutions}\label{s21}
The existence of a {solution to \eqref{modello3} in the time interval $[0,T)$ (with $T\leq +\infty$) with (positive) initial data $(S_0,I_0)$} is implied by the existence of a couple of absolutely continuous functions $(S(t),I(t))$ satisfying the Volterra-type fractional integral equation
	\begin{equation}\label{soldef}\begin{split}
	S(t)=S_0+\frac{1}{\Gamma(\alpha_1)}\int_0^{t} f(S(s),I(s))(t-s)^{\alpha_1-1}ds\quad\text{and}\quad \\
	I(t)=I_0-\frac{1}{\Gamma(\alpha_2)}\int_0^{t} f(S(s),I(s))(t-s)^{\alpha_2-1}ds.\end{split}\end{equation}
for all  $t\in(0,T)$. 
%\begin{remark}\label{estimates}
%We remark 
%%some basic monotonicity properties and bounds about the function $f$ that will be useful in the sequel. First of all  note that  
%%$$|f(x,y)|\leq \gamma\left(1+\frac{\beta}{\gamma}\right)|y|= (\beta+\gamma) |y|\qquad \forall x,y>0.$$
%that 
%$$ \qquad f(x,y)>0 \Leftrightarrow \frac{\beta-\gamma}{\gamma}x<y \qquad \forall x,y>0.$$
%%Then we have the following implications 
%%$$\forall x,y>0, \qquad \frac{\beta-\gamma}{\gamma}x<y \Rightarrow  f(x,y)\in (0,(\beta+\gamma)y).$$
%Note that above condition is satisfied by all $x,y>0$ as soon as $\beta\leq \gamma$. In this case, if $(S(t),I(t))$ is a positive solution of \eqref{modello3} then it readily follows from \eqref{soldef} that $S(t)$ is an increasing function and $I(t)$ is a decreasing function. Moreover  $N(t):=S(t)+I(t)$ satisfies
%$$N(t)=S_0+I_0+\int_{0}^tf(S(s),I(s))\left(\frac{(t-s)^{\alpha_1-1}}{\Gamma(\alpha_1)}-\frac{(t-s)^{\alpha_2-1}}{\Gamma(\alpha_2)}\right)ds$$
%Since for every fixed positive $\tau$ the map $\alpha\mapsto \frac{\tau^{\alpha-1}}{\Gamma(\alpha)}$ is increasing in $[0,1]$,  we have that if $\gamma\geq \beta$ then the integrand in above equality is nonnegative as soon as $\alpha_1\geq\alpha_2$. Consequently  $N(t)$ is increasing if $\alpha_1\geq \alpha_2$ and strictly decreasing otherwise. 
%% On the other hand, and this case can only occur when $\beta>\gamma$, 
%%$$\forall x,y>0, \qquad \frac{\beta-\gamma}{\gamma}x\geq y \Rightarrow  f(x,y)\in (-(\gamma+\beta)y,0)\subseteq \left(-M\frac{\beta-\gamma}{\gamma}x,0\right).$$
%\end{remark}
Fix $\alpha_1,\alpha_2\in[0,1]$. In the following we make use of the following function
$$G(t;\alpha_1,\alpha_2):=\max\left\{\frac{t^{\alpha_i}}{\Gamma(\alpha_i+1)}|i=1,2\right\}$$
which satisfies
$$G(t;\alpha_1,\alpha_2)\geq\frac{t^{\alpha_i}}{\Gamma(\alpha_i+1)}= \int_0^t \frac{(t-s)^{\alpha_i-1}}{\Gamma(\alpha_i)}ds,\qquad \forall t>0,\, i=1,2.$$

 In the next two results we establish some invariance properties for the field $f$ in the cases $\gamma\geq \beta$ and $\gamma<\beta$, respectively. 
 
 Note that below we often make use of the symbols $(x_0,y_0)$ to denote couples of positive real numbers and $x(t),y(t)$ to denote continuous real valued functions. This choice is meant to lighten the notation and we point out that,  in the search of solutions of \eqref{modello3}, $(x_0,y_0)$ plays the role of initial data $(S_0,I_0)$ whereas   $x(t)$ and $y(t)$ represent the (approximated) evolution of the susceptible population $S(t)$ and infected population $I(t)$, respectively. 

\begin{Lemma}\label{l1}
Let $\alpha_1,\alpha_2\in[0,1]$ and let $0<\beta\leq \gamma$ and fix $\varepsilon\in(0,1)$. Let $T=T_{\varepsilon,\gamma}$ be the positive real number such that 
$$G(T;\alpha_1,\alpha_2)(1+\varepsilon)\gamma\leq\varepsilon.$$
Fix $x_0,y_0>0$ and define 
$$Q_{x_0,y_0}:=[x_0,x_0+\varepsilon y_0]\times[(1-\varepsilon)y_0,(1+\varepsilon)y_0].$$
Then for any couple of absolutely continuous functions $x,y$ such that $(x(t),y(t))\in Q_{x_0,y_0}$ for all $t\in[0,T]$,  the associated functions
$$\tilde x(t):=x_0+\int_0^t f(x(s),y(s))\frac{(t-s)^{\alpha_1-1}}{\Gamma(\alpha_1)}ds$$
$$\tilde y(t):=y_0-\int_0^t f(x(s),y(s))\frac{(t-s)^{\alpha_2-1}}{\Gamma(\alpha_2)}ds.$$
also satisfy $(\tilde x(t),\tilde y(t))\in Q_{x_0,y_0}$ for all $t\in[0,T]$.
\end{Lemma}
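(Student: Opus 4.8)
The plan is to reduce the statement to two elementary estimates on the vector field $f$ over the rectangle $Q_{x_0,y_0}$. First I would record a two-sided bound on $f$: if $(x,y)\in Q_{x_0,y_0}$ then $x\ge x_0>0$ and $y\ge(1-\varepsilon)y_0>0$ (here $\varepsilon<1$ is used), so $x/(x+y)\in(0,1)$ and hence $\gamma-\beta\,x/(x+y)\in[\gamma-\beta,\gamma]$; since $0<\beta\le\gamma$ this interval is contained in $[0,\gamma]$, so multiplying by $y\in(0,(1+\varepsilon)y_0]$ gives
$$0\le f(x,y)\le (1+\varepsilon)\gamma\, y_0\qquad\text{for all }(x,y)\in Q_{x_0,y_0}.$$
In particular $s\mapsto f(x(s),y(s))$ is bounded on $[0,T]$, and since $\alpha_1,\alpha_2>0$ the kernels $(t-s)^{\alpha_i-1}$ are integrable on $(0,t)$, so $\tilde x$ and $\tilde y$ are well defined and continuous on $[0,T]$.

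Next I would estimate $\tilde x$. The lower bound $\tilde x(t)\ge x_0$ is immediate from $f\ge 0$ and positivity of the kernel. For the upper bound I insert $f(x(s),y(s))\le (1+\varepsilon)\gamma y_0$, use the identity $\int_0^t (t-s)^{\alpha_1-1}/\Gamma(\alpha_1)\,ds=t^{\alpha_1}/\Gamma(\alpha_1+1)$ recalled before the statement, and then the chain of inequalities $t^{\alpha_1}/\Gamma(\alpha_1+1)\le G(t;\alpha_1,\alpha_2)\le G(T;\alpha_1,\alpha_2)$, the last one because $t\mapsto t^{\alpha_i}$ is non-decreasing for $\alpha_i\ge 0$, hence so is $G(\,\cdot\,;\alpha_1,\alpha_2)$. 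This yields
$$\tilde x(t)\le x_0+(1+\varepsilon)\gamma\, y_0\, G(T;\alpha_1,\alpha_2)\le x_0+\varepsilon y_0,$$
the final step being exactly the defining inequality of $T=T_{\varepsilon,\gamma}$. Hence $\tilde x(t)\in[x_0,x_0+\varepsilon y_0]$ for every $t\in[0,T]$.

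The estimate for $\tilde y$ is symmetric. From $f\ge 0$ we get $\tilde y(t)\le y_0\le (1+\varepsilon)y_0$, and subtracting the same bound as above (now with $\alpha_2$ in place of $\alpha_1$) gives $\tilde y(t)\ge y_0-(1+\varepsilon)\gamma y_0\, G(T;\alpha_1,\alpha_2)\ge (1-\varepsilon)y_0$. Therefore $(\tilde x(t),\tilde y(t))\in Q_{x_0,y_0}$ for all $t\in[0,T]$, as claimed.

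As for difficulties, there is essentially no deep obstacle: the argument is a direct consequence of the sign condition $\beta\le\gamma$ (which forces $f\ge 0$, so that $S$ can only increase and $I$ can only decrease) and of the way $T$ was chosen. The only points demanding a little care are checking that $x+y$ stays bounded away from zero on $Q_{x_0,y_0}$, so that $f$ and the Volterra integrals make sense, and the monotonicity of $G(\,\cdot\,;\alpha_1,\alpha_2)$ that permits replacing $t$ by $T$ in the estimates. The genuinely nontrivial input is already built into the statement, namely the choice of the invariant rectangle $Q_{x_0,y_0}$; this lemma is the invariance step that will feed the Carathéodory-type construction of a solution.
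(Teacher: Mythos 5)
Your proof is correct and follows essentially the same route as the paper's: bound $f$ on the rectangle $Q_{x_0,y_0}$ using $0\le\gamma-\beta x/(x+y)\le\gamma$ and $y\le(1+\varepsilon)y_0$, then use $\int_0^t(t-s)^{\alpha_i-1}/\Gamma(\alpha_i)\,ds=t^{\alpha_i}/\Gamma(\alpha_i+1)\le G(T;\alpha_1,\alpha_2)$ and the defining inequality for $T$. Your version is in fact slightly tidier, since you state the two-sided bound on $f$ and the monotonicity of $G(\cdot\,;\alpha_1,\alpha_2)$ explicitly, which the paper leaves implicit.
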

\begin{proof}
Let $(x(t),y(t))\in Q_{x_0,y_0}$ for all $t\in[0,T]$, In particular one has $x(t )>0$ and $y(t)\in ((1-\varepsilon)y_0,(1+\varepsilon)y_0)$ for all $t\in[0,T]$. Then 
$$f(x(s),y(s))=\left(\gamma-\beta \frac{x(s)}{x(s)+y(s)}\right)y(s)<\gamma y(s)\qquad \forall s\in[0,T]$$ 
and this implies
\begin{align*}
\tilde x(t)&=x_0+\int_0^t f(x(s),y(s))\frac{(t-s)^{\alpha_1-1}}{\Gamma(\alpha_1)}ds\leq x_0+\int_0^t \gamma y(s)\frac{(t-s)^{\alpha_1-1}}{\Gamma(\alpha_1)}ds\\
&\leq x_0+ \gamma(1+\varepsilon)y_0\int_0^t \frac{(t-s)^{\alpha_1-1}}{\Gamma(\alpha_1)}ds\leq x_0+ G(t)(1+\varepsilon)\gamma y_0 \leq x_0 +\varepsilon y_0.
\end{align*}
Since $\gamma\geq \beta$ then $f(x,y)>0$ for all $x,y>0$, therefore $\tilde x(t)>x_0>0$ for all $t\in [0,T]$ and this proves that $\tilde x(t)\in(0,x_0+\varepsilon y_0)$ for all $t\in[0,T]$. 
On the other hand,
\begin{align*}
|\tilde y(t)-y_0|&\leq\int_0^t |f(x(s),y(s))|\frac{(t-s)^{\alpha_2-1}}{\Gamma(\alpha_2)}ds< \int_0^t \gamma y(s)\frac{(t-s)^{\alpha_2-1}}{\Gamma(\alpha_2)}ds\\
&\leq \gamma (1+\varepsilon)y_0\int_0^t \frac{(t-s)^{\alpha_2}}{\Gamma(\alpha_2)}ds\leq G(t) (1+\varepsilon)\gamma y_0 \leq \varepsilon y_0
\end{align*}
and this concludes the proof. 
\end{proof}
The next result deals with the case $\beta>\gamma$ and it posits some invariance properties of $f$, similar to those in Lemma \ref{l1}, in a time interval $[0,T]$. The main difference with Lemma \ref{l1} is that in this case the time $T$ depends not only on system parameters ($\beta\leq\gamma,\alpha_1$ and $\alpha_2$) but also on initial data.

\begin{Lemma}\label{l2}
Let $\alpha_1,\alpha_2\in[0,1]$ and $\beta> \gamma>0$. Fix $x_0,y_0>0$ and let  
$\varepsilon=\varepsilon(x_0,y_0):=\min\{\frac{1}{2},\frac{1}{2}\frac{x_0}{y_0}\}$ so that $\varepsilon \in(0,\frac{1}{2}]$ and  
$$ x_0>\frac{1}{2}x_0\geq \varepsilon y_0.$$
Let $T=T_{\beta,\gamma,x_0,y_0}$ be such that 
\begin{equation}\label{T0}G(T;\alpha_1,\alpha_2)(\beta+\gamma)(1+\varepsilon)\leq \varepsilon\end{equation}
and define 
{\small$$Q_{x_0,y_0}:=  [ x_0-\varepsilon y_0, x_0+\varepsilon]\times [(1-\varepsilon)y_0,(1+\varepsilon)y_0]. $$}
Then for any couple of  continuous functions $x(t),y(t)$ such that $(x(t),y(t))\in Q_{x_0,y_0}$ for all $t\in[0,T]$,  the associated functions
$$\tilde x(t):=x_0+\int_0^t f(x(s),y(s))\frac{(t-s)^{\alpha_1-1}}{\Gamma(\alpha_1)}ds$$
$$\tilde y(t):=y_0-\int_0^t f(x(s),y(s))\frac{(t-s)^{\alpha_2-1}}{\Gamma(\alpha_2)}ds.$$
also satisfy $(\tilde x(t),\tilde y(t))\in Q_{x_0,y_0}$ for all $t\in[0,T]$.
\end{Lemma}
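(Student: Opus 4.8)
The plan is to run the same scheme as in the proof of Lemma~\ref{l1}, but replacing the one–sided estimate $f<\gamma y$ by a two–sided bound on $|f|$, which is what the sign change of the field forces when $\beta>\gamma$. Fix a couple of continuous functions $x,y$ with $(x(t),y(t))\in Q_{x_0,y_0}$ for all $t\in[0,T]$. First I would record the two consequences of membership in $Q_{x_0,y_0}$ that make the argument run: since $\varepsilon y_0\le \tfrac12 x_0< x_0$ we get $x(t)\ge x_0-\varepsilon y_0>0$, while $y(t)\in[(1-\varepsilon)y_0,(1+\varepsilon)y_0]\subset(0,(1+\varepsilon)y_0]$; in particular $x(t)+y(t)>0$, so $f(x(t),y(t))$ is well defined along the whole interval, and, being continuous and bounded (see below), the convolution integrals defining $\tilde x,\tilde y$ make sense for $\alpha_1,\alpha_2\in(0,1]$.

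Second, I would establish the uniform bound on the field. For $x,y>0$ one has $\frac{x}{x+y}\in(0,1)$, whence
$$\Bigl|\gamma-\beta\frac{x}{x+y}\Bigr|\le \gamma+\beta\frac{x}{x+y}<\gamma+\beta,$$
and therefore, for all $s\in[0,T]$,
$$|f(x(s),y(s))|=\Bigl|\gamma-\beta\frac{x(s)}{x(s)+y(s)}\Bigr|\,y(s)<(\gamma+\beta)\,y(s)\le(\gamma+\beta)(1+\varepsilon)y_0.$$
This is the substitute for the estimate $f<\gamma y$ used in Lemma~\ref{l1}: since $f$ may now be negative, $\tilde x$ need no longer be increasing, so instead of a one–sided bound I estimate $|\tilde x(t)-x_0|$ and $|\tilde y(t)-y_0|$ from above. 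The constant $\gamma+\beta$ here is exactly the one appearing in the hypothesis \eqref{T0}.

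Third, I would plug this bound into the integral formulas and use the defining inequality of $G$ together with \eqref{T0}: for every $t\in[0,T]$,
\begin{align*}
|\tilde x(t)-x_0|&\le\int_0^t|f(x(s),y(s))|\,\frac{(t-s)^{\alpha_1-1}}{\Gamma(\alpha_1)}\,ds<(\gamma+\beta)(1+\varepsilon)y_0\int_0^t\frac{(t-s)^{\alpha_1-1}}{\Gamma(\alpha_1)}\,ds\\
&\le G(t;\alpha_1,\alpha_2)(\gamma+\beta)(1+\varepsilon)y_0\le\varepsilon y_0,
\end{align*}
and identically, with $\alpha_2$ in place of $\alpha_1$, $|\tilde y(t)-y_0|\le\varepsilon y_0$. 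The first chain gives $x_0-\varepsilon y_0\le\tilde x(t)\le x_0+\varepsilon y_0$, so $\tilde x(t)$ lies in the first factor of $Q_{x_0,y_0}$ (in particular $\tilde x(t)>0$), and the second gives $(1-\varepsilon)y_0\le\tilde y(t)\le(1+\varepsilon)y_0$, so $\tilde y(t)$ lies in the second factor. Hence $(\tilde x(t),\tilde y(t))\in Q_{x_0,y_0}$ for all $t\in[0,T]$, which is the claim.

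The only genuinely new point compared with Lemma~\ref{l1} — and the place I would be most careful — is the positivity of the first component: there $f>0$ gave $\tilde x(t)>x_0>0$ for free, whereas here $\tilde x$ can decrease. This is precisely why $\varepsilon$ is chosen to depend on the data through $\varepsilon\le\tfrac12 x_0/y_0$: it is exactly the margin needed so that the worst downward excursion $\varepsilon y_0$ of $\tilde x$ cannot reach $0$, keeping $x(s)+y(s)$ bounded away from zero and $f$ well defined and uniformly bounded throughout the iteration. Consequently the time $T=T_{\beta,\gamma,x_0,y_0}$ now depends on the initial datum as well, as already anticipated before the statement; the remainder is the routine absolute–value bookkeeping above, with no analytic difficulty.
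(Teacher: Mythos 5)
Your proof is correct and follows essentially the same route as the paper's: the uniform bound $|f(x,y)|\le(\beta+\gamma)y\le(\beta+\gamma)(1+\varepsilon)y_0$ on $Q_{x_0,y_0}$, the convolution estimate via $G$ and \eqref{T0}, and the two-sided control of $|\tilde x-x_0|$ and $|\tilde y-y_0|$. (Both your conclusion and the paper's own proof actually yield $\tilde x(t)\le x_0+\varepsilon y_0$, so the upper endpoint $x_0+\varepsilon$ in the stated definition of $Q_{x_0,y_0}$ appears to be a typo for $x_0+\varepsilon y_0$.)
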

\begin{proof}
Fix $x_0,y_0>0$. Let $x(t),y(t)$ be two continuous functions satisfying $(x(t),y(t))\in Q_{x_0,y_0}$ for all $t\in[0,T]$ with $T=T_{\beta,\gamma,x_0,y_0}$ satisfying \eqref{T0}.
Since 
$$|f(x(s),y(s)|\leq (\beta+\gamma)|y(s)|\qquad \forall s\in[0,T],$$
then one has
\begin{align*}
|\tilde x(t)-x_0|&=|\int_0^tf(x(s),y(s))\frac{(t-s)^{\alpha_1-1}}{\Gamma(\alpha_1)}ds|\leq \int_0^t(\gamma+\beta)|y(s)|\frac{(t-s)^{\alpha_1-1}}{\Gamma(\alpha_1)}ds|\\
&\leq G(T;\alpha_1,\alpha_2)(\beta+\gamma)(1+\varepsilon)y_0\leq \varepsilon y_0. \end{align*}
This implies $\tilde x(t)\in(x_0-\varepsilon y_0,x_0+\varepsilon y_0)$ for all $t\in[0,T]$. 
Arguing as above, one also deduces
\begin{align*}
|\tilde y(t)-y_0|&=|\int_0^tf(x(s),y(s))\frac{(t-s)^{\alpha_2-1}}{\Gamma(\alpha_2)}ds|\leq \varepsilon y_0. \end{align*}
and this implies $\tilde y(t)\in[(1-\varepsilon)y_0,(1+\varepsilon)y_0]$ for all $t\in[0,T]$. 
\end{proof}

We are finally in position to state the main result of the section, namely a global existence for the solutions of \eqref{modello3} in the case $\gamma\geq \beta$ and a local existence result in the case $\beta>\gamma$. 
\begin{Theorem}
Let $\alpha_1,\alpha_2\in[0,1]$ and let $0<\beta\leq \gamma$. Then for every initial data $S_0,I_0>0$ the system \eqref{modello3} admists a positive solution in $(0,+\infty)$.

If otherwise $0<\gamma<\beta$ then for every initial data  $S_0,I_0>0$ the system \eqref{modello3} admists a positive solution in $(0,T]$ where $T=T_{x_0,y_0,\beta,\gamma}>0$ satisfies
\begin{equation}\label{Test}\max\left\{ \frac{T^{\alpha_1}}{\Gamma(\alpha_1)}, \frac{T^{\alpha_2}}{\Gamma(\alpha_2)}\right\}=G(T;\alpha_1,\alpha_2)= \begin{cases}
\frac{1}{3(\beta+\gamma)}&\text{if } S_0\geq I_0;\\
\frac{S_0}{S_0+2I_0}\frac{1}{(\beta+\gamma)}&\text{if } S_0<I_0.
\end{cases}\end{equation}
\end{Theorem}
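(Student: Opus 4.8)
The plan is to realize a positive solution of \eqref{modello3} as a couple of continuous functions $(S,I)$, taking values in the invariant rectangle $Q_{S_0,I_0}$ of Lemma~\ref{l1} (when $\beta\le\gamma$) or Lemma~\ref{l2} (when $\gamma<\beta$) and solving the Volterra system~\eqref{soldef}; by the remarks preceding Lemma~\ref{l1}, together with the fact that the fractional integral of a continuous function is absolutely continuous (a short bootstrap on the H\"older exponent actually upgrades it to Lipschitz regularity), any such couple is a positive solution of~\eqref{modello3}. For the \emph{local} existence I would run the Carath\'eodory-type scheme announced in the Introduction: fix $[0,T]$ as in the relevant lemma, set $\delta_n=T/n$, let $(x_n,y_n)$ equal $(S_0,I_0)$ on $[-\delta_n,0]$, and on each successive sub-interval of length $\delta_n$ in $[0,T]$ let $(x_n(t),y_n(t))$ be the right-hand side of~\eqref{soldef} with $(S(s),I(s))$ replaced by the already-constructed delayed value $(x_n(s-\delta_n),y_n(s-\delta_n))$. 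Since the delayed arguments always lie in the part of $[0,T]$ already built, the very estimates in the proofs of Lemma~\ref{l1} and Lemma~\ref{l2} show by induction that $(x_n(t),y_n(t))\in Q_{S_0,I_0}$ for all $t$ and $n$; hence the $(x_n,y_n)$ are uniformly bounded and, using the uniform modulus of continuity of $g\mapsto\int_0^{\cdot}g(s)(\cdot-s)^{\alpha_i-1}\,ds$ on bounded sets, equicontinuous, so Ascoli--Arzel\`a yields a subsequence converging uniformly to some $(S,I)\in C([0,T];Q_{S_0,I_0})$. Passing to the limit under the integral sign ($f$ being Lipschitz on $Q_{S_0,I_0}$ and the kernels integrable) shows that $(S,I)$ solves~\eqref{soldef} on $[0,T]$. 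One may obtain the same $(S,I)$ by Schauder's theorem applied to the map $(x,y)\mapsto(\tilde x,\tilde y)$ of Lemma~\ref{l1} (or~\ref{l2}) on the closed convex set $C([0,T];Q_{S_0,I_0})$.

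The case $0<\gamma<\beta$ then follows at once, once one notices that the $T$ prescribed by~\eqref{Test} is precisely the local-existence time of Lemma~\ref{l2} for the choice $\varepsilon=\varepsilon(S_0,I_0)=\min\{\tfrac12,\tfrac12 S_0/I_0\}$. Indeed, inserting this $\varepsilon$ into~\eqref{T0} and distinguishing the two cases gives $G(T;\alpha_1,\alpha_2)(\beta+\gamma)\tfrac32=\tfrac12$, i.e.\ $G(T;\alpha_1,\alpha_2)=\tfrac1{3(\beta+\gamma)}$, when $S_0\ge I_0$ (so that $\varepsilon=\tfrac12$), and $G(T;\alpha_1,\alpha_2)(\beta+\gamma)=\tfrac{S_0}{S_0+2I_0}$ when $S_0<I_0$ (so that $\varepsilon=\tfrac{S_0}{2I_0}$); these are exactly the two alternatives in~\eqref{Test}. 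The local step then supplies a positive solution of~\eqref{modello3} on $(0,T]$.

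For the global statement, in the case $0<\beta\le\gamma$ the decisive point is that the local-existence time $T_{\varepsilon,\gamma}$ of Lemma~\ref{l1} does \emph{not} depend on the initial data, so I would iterate. Two a priori bounds keep the iteration alive: since the solution stays in the positive quadrant, where $f\ge0$ (as $\gamma\ge\beta$), one reads off from~\eqref{soldef} that $S(t)\ge S_0>0$ and $I(t)\le I_0$; and, writing $f(S,I)=c(t)I$ with $0\le c(t)\le\gamma$, the Caputo comparison principle (fractional minimum principle applied to $I-I_0E_{\alpha_2}(-\gamma t^{\alpha_2})$) gives $I(t)\ge I_0E_{\alpha_2}(-\gamma t^{\alpha_2})>0$ wherever the solution is defined. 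To extend a solution already built on $[0,t_k]$, observe that because the Caputo operator is nonlocal the restriction of~\eqref{soldef} to $[t_k,t_k+\tau]$ takes the form $S=\phi_S+\tfrac1{\Gamma(\alpha_1)}\int_{t_k}^{\cdot}f(S,I)(\cdot-s)^{\alpha_1-1}ds$ and $I=\phi_I-\tfrac1{\Gamma(\alpha_2)}\int_{t_k}^{\cdot}f(S,I)(\cdot-s)^{\alpha_2-1}ds$, where the known ``memory tails'' $\phi_S,\phi_I$ — integrals of the non-negative field $f$ against kernels with negative exponent — are respectively nonincreasing and nondecreasing in $t$, hence confined to the fixed positive bands $[S_0,S(t_k)]$ and $[I(t_k),I_0]$. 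Repeating the invariance argument of Lemma~\ref{l1} with the constants $S_0,I_0$ replaced by these tails (the estimates survive verbatim since the tails stay in a fixed positive band and since, choosing $\tau=T_{k+1}$ so small that $\gamma I_0\,G(T_{k+1};\alpha_1,\alpha_2)\le\tfrac12 I_0E_{\alpha_2}(-\gamma t_k^{\alpha_2})$, the rectangle $[S_0,S(t_k)+\tfrac12 I_0]\times[\tfrac12 I_0E_{\alpha_2}(-\gamma t_k^{\alpha_2}),I_0]$ is invariant for the corresponding operator) produces, via the local step, a positive solution on $[t_k,t_{k+1}]$ agreeing with the previous one at $t_k$; concatenating and re-running the comparison argument on $[0,t_{k+1}]$ closes the induction. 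A direct estimate shows that the admissible steps $T_{k+1}$ decay no faster than $t_k^{-\alpha_2/\max\{\alpha_1,\alpha_2\}}$, so $\sum_k T_k=+\infty$ and $t_k\to+\infty$, which yields a positive solution on $(0,+\infty)$.

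I expect the memory/continuation step in the case $\beta\le\gamma$ to be the main obstacle: unlike for ODEs, restarting a Caputo solution at a later time is not itself an initial-value problem, so one must carry the past along through the tails $\phi_S,\phi_I$, verify that the invariance estimates of Lemma~\ref{l1} persist when the constant data are replaced by them, and exploit the comparison lower bound on $I$ both to keep the solution positive and to ensure the (shrinking) time-steps still sum to $+\infty$. The remaining ingredients — equicontinuity of the fractional integral, continuity and compactness of the solution operator, the passage to the limit, and the algebra behind~\eqref{Test} — are routine.
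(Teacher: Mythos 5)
Your local-existence argument is essentially the paper's own: the same delayed Carath\'eodory iteration built from \eqref{soldef} with arguments shifted by $T/n$, the same use of Lemmas \ref{l1} and \ref{l2} to trap the iterates in $Q_{S_0,I_0}$, equicontinuity via the modulus of continuity of $t\mapsto t^{\alpha_i}/\Gamma(\alpha_i+1)$, Ascoli--Arzel\`a, and dominated convergence to pass to the limit in the Volterra system; your check that the time in \eqref{Test} is exactly the $T$ of \eqref{T0} for $\varepsilon=\min\{1/2,\,S_0/(2I_0)\}$ is also the intended reading.

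Where you genuinely diverge is the global continuation for $\beta\le\gamma$. The paper dispatches it in one line by re-applying the local result to the ``initial datum'' $(S(T/2),I(T/2))$; as you observe, for a nonlocal operator this is not literally a restart of the same problem, since the concatenated function must solve the Volterra equation with kernel based at $0$, not at $T/2$. Your fix --- rewriting \eqref{soldef} on $[t_k,t_k+\tau]$ with the memory tails $\phi_S,\phi_I$, noting they are monotone and confined to $[S_0,S(t_k)]$ and $[I(t_k),I_0]$ because $f\ge 0$, and re-running the invariance estimate on a rectangle anchored at the tails --- is the honest way to legitimise the iteration, and is more careful than the paper on this point. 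It does, however, lean on two facts you only assert: the Caputo comparison principle giving $I(t)\ge I_0E_{\alpha_2}(-\gamma t^{\alpha_2})$ (needed so that the admissible steps $T_{k+1}$, controlled by $I(t_k)$, are not summable), and the claim that the Lemma \ref{l1} estimates ``survive verbatim'' with time-dependent tails in place of constants. Both are standard and checkable --- and note that any bound $T_{k+1}\ge c\,t_k^{-p}$ with $p>0$ already forces $t_k\to\infty$, so the precise exponent you quote is immaterial --- but they are not in the paper and would have to be proved to close the argument. A minor quibble: the fractional integral of a bounded function is $\alpha$-H\"older, not Lipschitz; absolute continuity of the limit is what is actually needed and is the property to verify.
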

\begin{proof}
In order to have a more light notation, let $x_0:=S_0>0$ and $y_0:=I_0>0$. Also fix $\varepsilon:=1/2$. Define
$$T:=\begin{cases}
T_{\varepsilon,\beta,\gamma} &\text{ as in Lemma \ref{l1} if $\gamma\geq \beta$}\\
T_{x_0,y_0,\beta,\gamma} &\text{ as in \eqref{Test} if $\gamma< \beta$}.
\end{cases}$$
Note that, if $\gamma<\beta$ then $T$ meets the hypothesis of Lemma \ref{l2}.

Consider $Q_{x_0,y_0}$ as in Lemma \ref{l1} if $\gamma \geq \beta$ and $Q_{x_0,y_0}$ as in Lemma \ref{l2} if $\gamma <\beta$. 
We first prove that \eqref{modello3} admits a positive solution in $[0,T]$ -- note that $T$ is independent from $x_0,y_0$ when $\gamma\geq \beta$. To this end, consider the sequence of functions for $n\geq 1$
\begin{equation}\label{tx}
\tilde x_n(t):=\begin{cases}
x_0 &\text{if } {t\in [0,T/n]}\\
x_0 +\int_0^{t-T/n} f(\tilde x(s),\tilde y(s))\frac{(t-s)^{\alpha_1-1}}{\Gamma(\alpha_1)}ds&\text{if } {t\in (T/n,T]};
\end{cases}\end{equation}
\begin{equation}\label{ty}
\tilde y_n(t):=\begin{cases}
y_0 &\text{if } {t\in [0,T/n]}\\
y_0 -\int_0^{t-T/n}  f(\tilde x(s),\tilde y(s))\frac{(t-s)^{\alpha_2-1}}{\Gamma(\alpha_2)}ds&\text{if } {t\in (T/n,T]}.
\end{cases}
\end{equation}
Set for brevity $\tilde z_n(t):=(\tilde x_n(t),\tilde y_n(t))$. 

\emph{Claim 1.} The sequence of functions $\{\tilde z_n(t)\}_{n\geq 1}$ is well defined, equicontinuous and equibounded in $[0,T]$. In particular $\tilde z_n(t)\in Q_{x_0,y_0}$ for all $t\in[0,T]$ and for all $n\geq 1$. \\
\medskip
We prove the claim by showing by induction that for all $n$ and for all $k=1,\dots,n$, $\tilde z_n(t)$ is well defined in $t\in [0,kT/n]$, $\tilde z_n(t)\in Q_{x_0,y_0}$ for all $t\in [0,kT/n]$, and that 
\begin{equation}\label{equi}|z_n(t_1)-z_n(t_2)|\leq 2 \parallel f\parallel_{L^\infty(Q_{x_0,y_0})} \omega(|t_1-t_2|) \qquad \forall t_1,t_2\in[0,kT/n]\end{equation}
where $\omega$ is the modulus of continuity of the function $G(\cdot;\alpha_1,\alpha_2)$. 
If $k=1$, then by the definitions in \eqref{tx} and \eqref{ty} it follows that $z_n(t)=(x_0,y_0)$ for all $t\in[0,T/n]$ and the base of the induction readily follows. 
 Fix $k$ such that $1<k<n$ and assume  that $\tilde z_n(t)$ is well defined on $[0, k T/n]$,  $\tilde z_n(t)\in Q_{x_0,y_0}$ for every $t\in [0,kT/n]$, and that \eqref{equi} holds.
  Then using the second lines of \eqref{tx} and \eqref{ty} we have that the definition of $\tilde z_n(t)$ continuously extends to $[0, (k+1)T/n]$ by setting for $t\in (kT/n,(k+1)T/n]$
$$\tilde z_n(t):=\left(x_0+\int_0^{t-T/n} f(\tilde z_n(s))\frac{(t-s)^{\alpha_1}}{\Gamma(\alpha_1+1)}ds,y_0-\int_0^{t-T/n} f(\tilde z_n(s))\frac{(t-s)^{\alpha_2}}{\Gamma(\alpha_2+1)}ds\right),$$
By  applying Lemma \ref{l1} or Lemma \ref{l2} (according to the cases $\gamma\geq \beta$ and $\gamma<\beta$, respectively)
to $(x(s),y(s))=z(s):=\tilde z_n(s+(k-1)T/n)$  (which satisfies $(x(s),y(s)) \in Q_{x_0,y_0}$ for all $s\in [0,T/n]$ by inductive hypothesis) we have that $\tilde z_n(t)\in Q_{x_0,y_0}$ for all $t\in [k T,(k+1)T/n]$. 
It is left to show that
\begin{equation}\label{equi+}|z_n(t_1)-z_n(t_2)|\leq 2 \parallel f\parallel_{L^\infty(Q_{x_0,y_0})} \omega(|t_1-t_2|) \qquad \forall t_1,t_2\in[0,(k+1)T/n]\end{equation}
For all $t_1,t_2\in[T/n,(k+1)T/n]$, since $\tilde z_n(s)\in Q_{x_0,y_0}$ for all $s\in[0,(k+1)T/n]$, one has
\begin{align*}
|\tilde z_n(t_2)-\tilde z_n(t_1)|&\leq \int_{t_1-T/n}^{t_2-T/n}|f(\tilde z_n(s))|\frac{(t-s)^{\alpha_1}}{\Gamma(\alpha_1+1)}ds+\int_{t_1-T/n}^{t_2-T/n}|\tilde f(z_n(s))|\frac{(t-s)^{\alpha_2}}{\Gamma(\alpha_2+1)}ds\\
&\leq \parallel f\parallel_{L^\infty(Q_{x_0,y_0})}  \int_{t_1-T/n}^{t_2-T/n}\frac{(t-s)^{\alpha_1}}{\Gamma(\alpha_1+1)}+ \frac{(t-s)^{\alpha_2}}{\Gamma(\alpha_2+1)}ds\\
&\leq 2\parallel f\parallel_{L^\infty(Q_{x_0,y_0})} |G(t_2-T/n;\alpha_1,\alpha_2)-G(t_1-T/n;\alpha_1,\alpha_2)|\\
&\leq 2\parallel f\parallel_{L^\infty(Q_{x_0,y_0})}\omega(|t_2-t_1|).\end{align*}
The case in which $t_1,t_2\in[0,T/n]$ is trivial, because $\tilde z_n$ is constant in that interval. It is left to discuss the case in which $t_1\in[0,T/n]$ and $t_2\in[T/n,(k+1)T/n]$. In this case $|t_2-t_1|=t_2-t_1\geq t_2-T/n=|t_2-T/n|$. Then arguing as above one gets
\begin{align*}
|\tilde z_n(t_2)-\tilde z_n(t_1)|&=|\tilde z_n(t_2)-(x_0,y_0)| \leq \int_{0}^{t_2-T/n}|f(\tilde z_n(s))|\frac{(t-s)^{\alpha_1}}{\Gamma(\alpha_1+1)}ds\\
&\leq \int_{0}^{t_2-T/n}|f(\tilde z_n(s))|\frac{(t-s)^{\alpha_1}}{\Gamma(\alpha_1+1)}ds\\
&\leq 2\parallel f\parallel_{L^\infty(Q_{x_0,y_0})}\omega(|t_2-T/n|)\\
&\leq 2\parallel f\parallel_{L^\infty(Q_{x_0,y_0})}\omega(|t_2-t_1|)
.\end{align*}
This concludes the proof of the inductive step and, consequently, of the Claim 1. 

\medskip
Now, by Claim 1 and by Ascoli-Arzela's theorem, there exists a subsequence $\{\tilde z_{n_k}(t)\}$ converging uniformly in $[0,T]$ to a continuous limit function $ z(t)=(S(t),I(t))$ satisfying 
$z(t)\in  {Q_{x_0,y_0}}$ -- recall indeed that $Q_{x_0,y_0}\subset(0,+\infty)\times(0,+\infty)$ is a compact set.  This implies in particular $S(t)>0$ and $I(t)>0$ for all $t\in[0,T]$.  Since $f$ is continuous, then
$$f(\tilde z_{n_k}(t))\to f(z(t))=f(S(t),I(t))\qquad \text{as $k\to+\infty$},\quad\forall t\in[0,T]$$
moreover,
$$|f(\tilde z_{n_k}(t))|\leq \parallel f\parallel_{L^\infty({Q_{x_0,y_0}})}\quad \forall t\in[0,T].$$
Then, by Lebesgue's dominated convergence theorem,  for every fixed $t\in(0,T]$ and for $i=1,2$
$$\lim_{k\to\infty}\int_0^t f(\tilde z_{n_k}(s))\frac{(t-s)^{\alpha_i-1}}{\Gamma(\alpha_i)}ds= \int_0^t f( z(s))\frac{(t-s)^{\alpha_i-1}}{\Gamma(\alpha_i)}ds= \int_0^t f( S(s),I(s))\frac{(t-s)^{\alpha_i-1}}{\Gamma(\alpha_i)}ds$$
Therefore, for all $t\in(0,T]$, choosing $k$ sufficiently large to have $t>T/n_k$, one has $\tilde z_{n_k}(t)=(\tilde x_{n_k}(t),\tilde y_{n_k}(t))$ where $\tilde x_{n_k}$ and $\tilde y_{n_k}$ satisfy
\begin{align*}\lim_{k\to+\infty}\tilde x_{n_k}&=\lim_{k\to+\infty}x_0+\int_0^{t} f(\tilde z_{n_k}(s))\frac{(t-s)^{\alpha_1}}{\Gamma(\alpha_1+1)}ds-\int^t_{t-T/n_k} f(\tilde z_{n_k}(s))\frac{(t-s)^{\alpha_1}}{\Gamma(\alpha_1+1)}ds
\\&
= x_0+\int_0^{t} f(S(s),I(s))\frac{(t-s)^{\alpha_1-1}}{\Gamma(\alpha_1)}ds
\end{align*}
and
\begin{align*}\lim_{k\to+\infty}\tilde y_{n_k}&=\lim_{k\to+\infty} y_0-\int_0^{t} f(\tilde z_{n_k}(s))\frac{(t-s)^{\alpha_2-1}}{\Gamma(\alpha_2)}ds-\int^t_{t-T/n_k} f(\tilde z_{n_k}(s))\frac{(t-s)^{\alpha_2-1}}{\Gamma(\alpha_2)}ds
\\&= y_0-\int_0^{t} f(S(s),I(s))\frac{(t-s)^{\alpha_2-1}}{\Gamma(\alpha_2)}ds.
\end{align*}
On the other hand $(\tilde x_{n_k}(t),\tilde y_{n_k}(t))\to (S(t),I(t))$ as $k\to \infty$ for all $t\in(0,T]$, and recalling $x_0=S_0$ and $y_0=I_0$ one deduces that 
$$S(t)=S_0+\int_0^{t} f(S(s),I(s))\frac{(t-s)^{\alpha_1-1}}{\Gamma(\alpha_1)}ds$$
and
$$I(t)=I_0-\int_0^{t} f(S(s),I(s))\frac{(t-s)^{\alpha_2-1}}{\Gamma(\alpha_2)}ds.$$
Then $(S(t),I(t))$ is the required solution of \eqref{modello3} in $(0,T]$. If $\beta>\gamma$ then we are done, because we proved the local existence of a solution of \eqref{modello3}. If $\beta\leq\gamma$, then we can iteratively extend  $(S(t),I(t))$ to $(0,+\infty)$. For instance by applying the result to the initial datum $(x_0,y_0):=(S(T/2),I(T/2))$ we then obtain a solution defined in $[0,T/2+T]$ and so on.

\end{proof}

\subsection{Numerical simulations}\label{sec:CCnum}
%In this section we are concerned with the SIS model using Caputo-Fabrizio type fractional derivatives. We refer to \cite{higazy2020AEJ, khan2021AIMS, moore2019ADE, ullah2020DCDSS} for some examples of epidemic models based on the Caputo-Fabrizio fractional operator.

%\cate
{The numerical discretization of system \eqref{modello3} follows \cite{giga2019AA}. Let us consider a general equation
\begin{equation}\label{eq:num}
	{^C D}^{\alpha}_tu(t) = f(u(t)).
\end{equation}
and consider a numerical grid which uniformly divides the time interval $[0,T]$ into $\nt$ steps of length $\dt$. We denote by $u^{n}=u(t^{n})$, with $t^{n}=n\dt$.
Let $\alpha\in(0,1)$, then the Caputo derivative can be approximated as 
\begin{equation*}
	{^C D}^{\alpha}_tu^{n}=\frac{1}{\Gamma(2-\alpha)\dt^{\alpha}}\Big(u^{n}-\sum_{j=0}^{n-1}C_{n,j}u^{j} \Big),
\end{equation*}
with 
\begin{align*}
	C_{n,0} = g(n),\qquad C_{n,j} = g(n-j)-g(n-(j-1)) \quad\text{ for $j=1,\dots,n-1$}
\end{align*}
and $g(r) = r^{1-\alpha}-(r-1)^{1-\alpha} \text{ for $r\geq1$}$. The numerical scheme to solve \eqref{eq:num} is then given by
\begin{equation*}\label{giga}
	u^{n+1} = \sum_{j=0}^{n-1}C_{n,j}u^{j}+\Gamma(2-\alpha)\dt^{\alpha}f(u^{n}).
\end{equation*} 
Let us denote by $S^{n}=S(t^{n})$ and $I^{n}=I(t^{n})$. By applying this discretization to system \eqref{modello3} we obtain
\begin{align*}
	S^{n+1} &= \sum_{j=0}^{n-1}C_{n,j}S^{j}+\Gamma(2-\alpha_{1})\dt^{\alpha_{1}}f_{S}(S^{n},I^{n})
\\
	I^{n+1} &= \sum_{j=0}^{n-1}C_{n,j}S^{j}+\Gamma(2-\alpha_{2})\dt^{\alpha_{2}}f_{I}(S^{n},I^{n}),
\end{align*}
with
\begin{align*}
	f_{S}(S,I) = -\beta\frac{S}{S+I}I+\gamma I \qquad\text{and}\qquad f_{I}(S,I) = \beta\frac{S}{S+I}I-\gamma I.
\end{align*}

 We show the evolution in time of $S(t)$, $I(t)$ and their sum $S(t)+I(t)$ as $\alpha_{1}, \alpha_{2}$ changes: in Figures \ref{fig:dato1} we considered a case in which $\beta>\gamma$ and in  \ref{fig:dato5} $\gamma<\beta$. Note  in plots (a) and (b) that the steepness of the solutions $S(t)$ and $I(t)$ in the long run are related to the size of $\alpha_1$ and $\alpha_2$, respectively: this can be interpreted as a time delay effect of the Caputo fractional operator. 
An interesting phenomenon is also the lack of monotonicity of the solutions, in contrast with ordinary SIS models. 

% and the initial data $S_{0}$ and $I_{0}$ change. We observe that the susceptible (resp.\ infected) population increases (resp.\ decreases) faster as $\alpha_{1}$ (resp.\ $\alpha_{2}$) grows, see plots (a) and (b).
% 
  The sum of the two classes, see plots (c), shows that $N(t)=S(t)+I(t)$ is in general not monotone. For instance, in Figure \ref{fig:dato1}, $N(t)$ first decreases and then increases when $\alpha_{1}>\alpha_{2}$, while it first increases and then decreases when $\alpha_{1}<\alpha_{2}$ (a symmetrical behavior emerges in Figure \ref{fig:dato5}).  In case $\alpha_{1}=\alpha_{2}$ then the sum is constant and we recover the theory developed in \cite{balzotti2020FF, hassouna2018CSF}. A rigorous study of the symmetries emerging in the simulations and, in particular the intersections of all the functions $N(t)$ at the same time, is still under investigation. 
%  Finally, we observe that the qualitative behavior of the solution to the fractional model \eqref{modello3} is not much affected by the initial data.
}

\begin{figure}[h!]
\centering
	\subfloat[][$S(t)$]{
	\includegraphics[width=0.31\columnwidth]{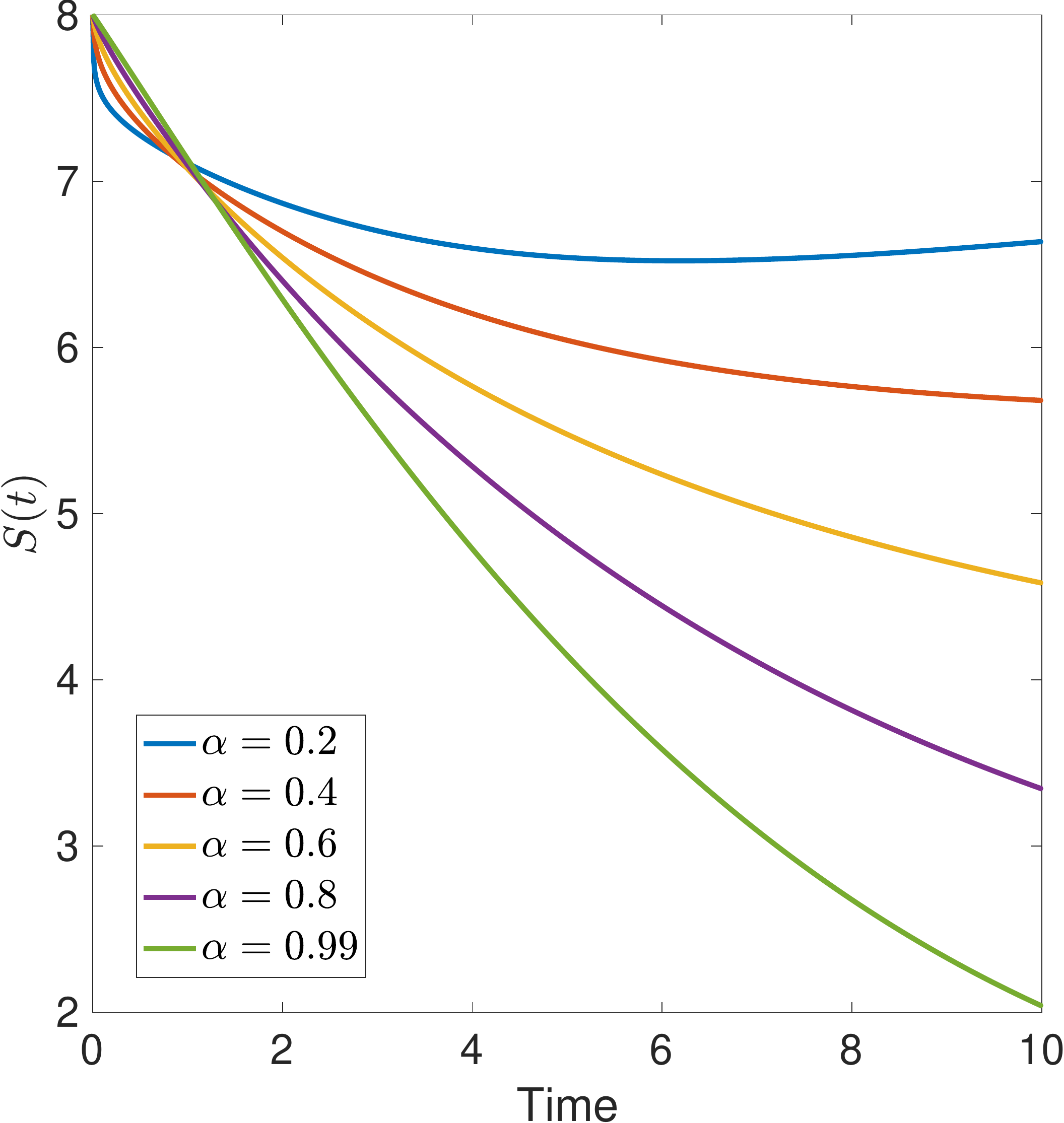}
	}\,
	\subfloat[][$I(t)$]{
	\includegraphics[width=0.31\columnwidth]{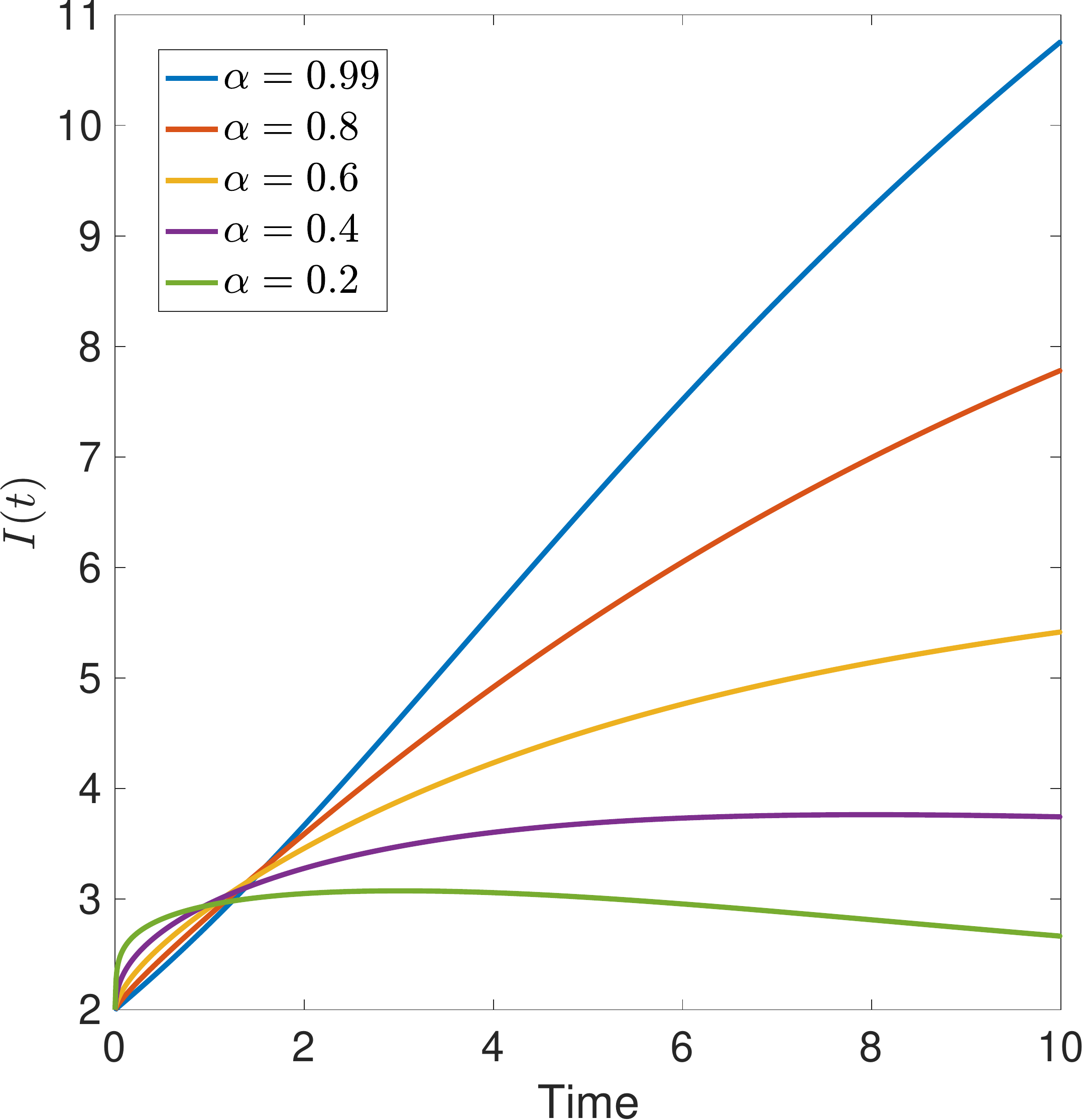}
	}\,
	\subfloat[][$S(t)+I(t)$]{
	\includegraphics[width=0.31\columnwidth]{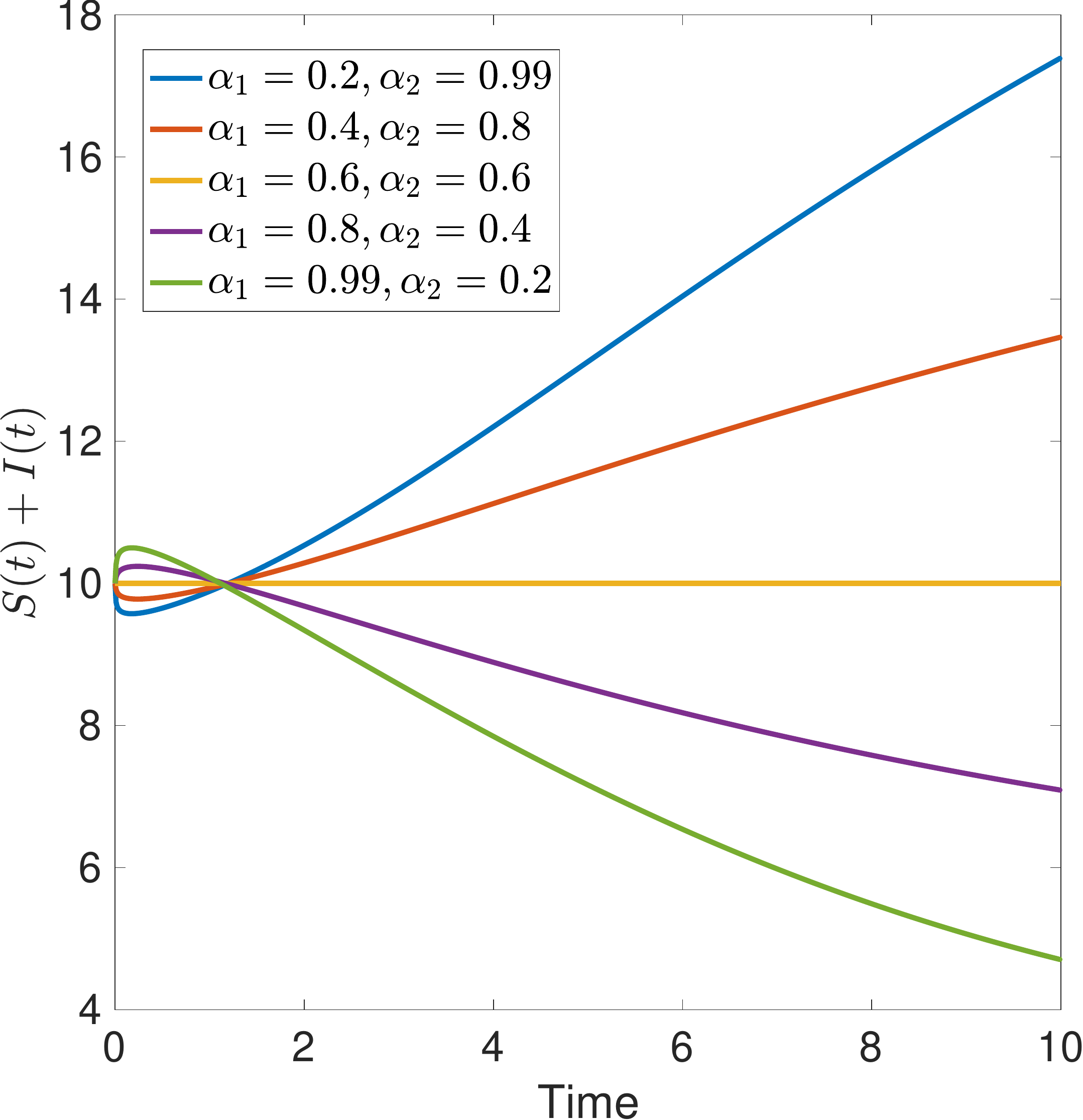}
	}		
\caption{Numerical solutions to \eqref{modello3} with $\beta=0.7$, $\gamma=0.2$, $I_{0}=2$ and $S_{0}=8$.}
\label{fig:dato1}
\end{figure}

%\begin{figure}[h!]
%\centering
%	\subfloat[][$S(t)$]{
%	\includegraphics[width=0.31\columnwidth]{grafici/SIcap_sus_ES_4.pdf}
%	}\,
%	\subfloat[][$I(t)$]{
%	\includegraphics[width=0.31\columnwidth]{grafici/SIcap_inf_ES_4.pdf}
%	}\,
%	\subfloat[][$S(t)+I(t)$]{
%	\includegraphics[width=0.31\columnwidth]{grafici/SIcap_som_ES_4.pdf}
%	}		
%\caption{Model \eqref{modello3} with $\beta=0.7$, $\gamma=0.6$, $I_{0}=4$ and $S_{0}=6$.}
%\label{fig:dato2}
%\end{figure}
%
%\begin{figure}[h!]
%\centering
%	\subfloat[][$S(t)$]{
%	\includegraphics[width=0.31\columnwidth]{grafici/SIcap_sus_ES_6.pdf}
%	}\,
%	\subfloat[][$I(t)$]{
%	\includegraphics[width=0.31\columnwidth]{grafici/SIcap_inf_ES_6.pdf}
%	}\,
%	\subfloat[][$S(t)+I(t)$]{
%	\includegraphics[width=0.31\columnwidth]{grafici/SIcap_som_ES_6.pdf}
%	}		
%\caption{Model \eqref{modello3} with $\beta=0.7$, $\gamma=0.6$, $I_{0}=6$ and $S_{0}=4$.}
%\label{fig:dato3}
%\end{figure}
%
%\begin{figure}[hpbt!]
%\centering
%	\subfloat[][$S(t)$]{
%	\includegraphics[width=0.31\columnwidth]{}
%	}\,
%	\subfloat[][$I(t)$]{
%	\includegraphics[width=0.31\columnwidth]{}
%	}\,
%	\subfloat[][$S(t)+I(t)$]{
%	\includegraphics[width=0.31\columnwidth]{}
%	}		
%\caption{Model \eqref{modello3} with $\beta=0.7$, $\gamma=0.6$, $I_{0}=8$ and $S_{0}=2$.}
%\label{fig:dato4}
%\end{figure}

%Finally, in Figure we considered a case in which the reproduction number $R$ is less than $1$. As one may expect, in this case the number of 
\begin{figure}[h!]
\centering
	\subfloat[][$S(t)$]{
	\includegraphics[width=0.31\columnwidth]{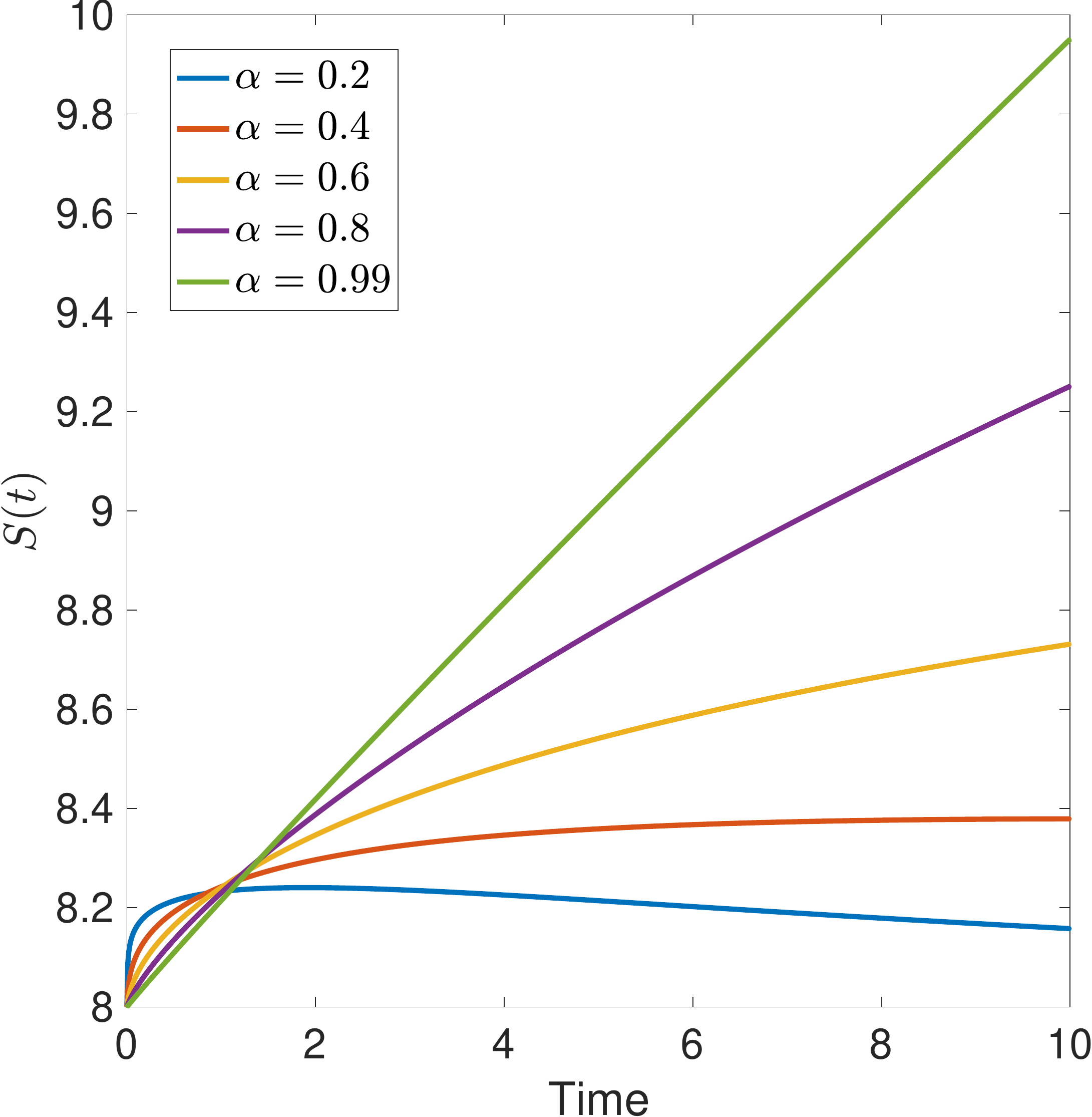}
	}\,
	\subfloat[][$I(t)$]{
	\includegraphics[width=0.31\columnwidth]{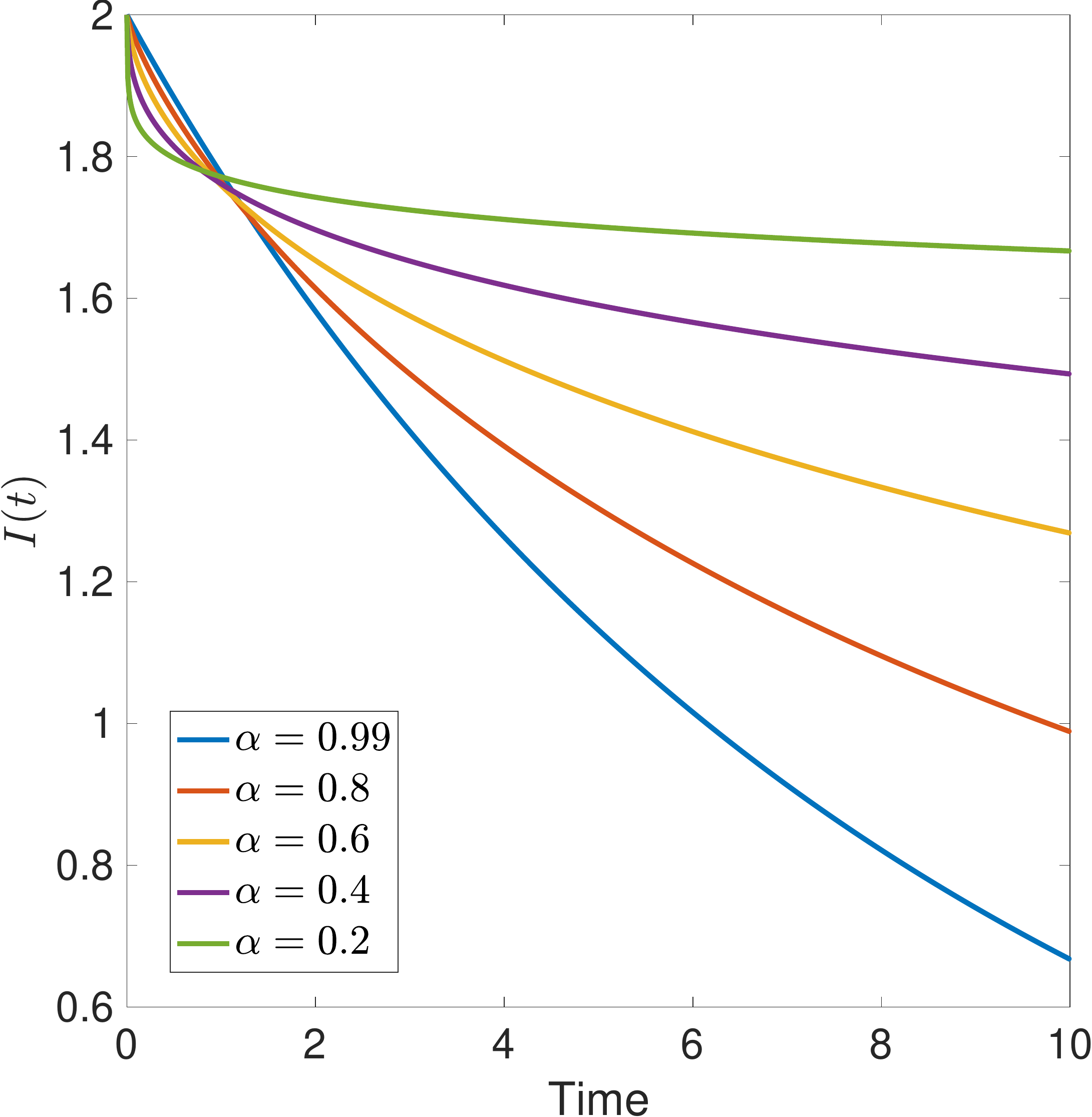}
	}\,
	\subfloat[][$S(t)+I(t)$]{
	\includegraphics[width=0.31\columnwidth]{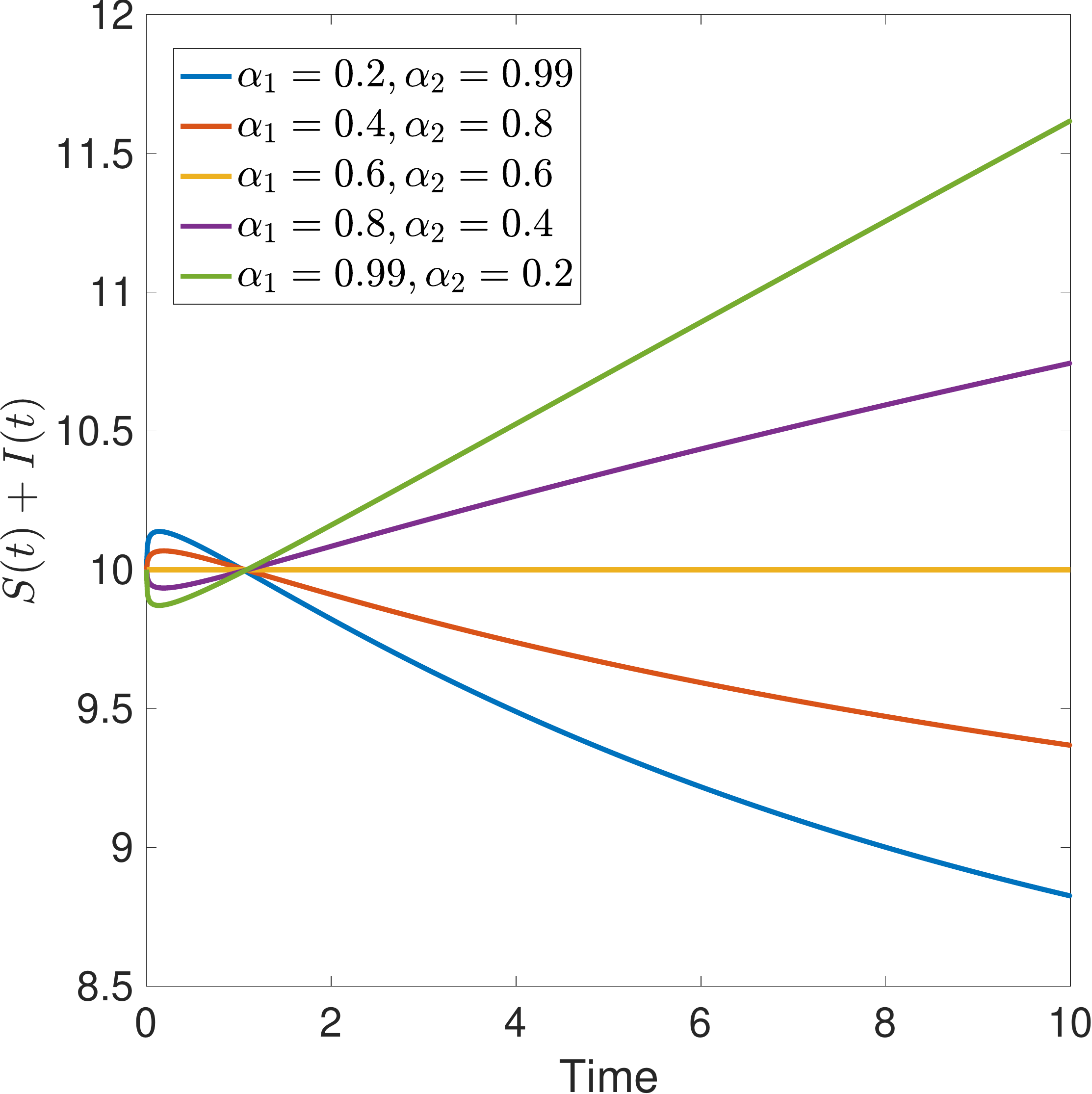}
	}		
\caption{Numerical solutions to \eqref{modello3} with $\beta=0.1$, $\gamma=0.2$, $I_{0}=8$ and $S_{0}=2$.}
\label{fig:dato5}
\end{figure}

\section{Caputo-Fabrizio fractional epidemic models}\label{s3}
In this section we are concerned with the SIS model using Caputo-Fabrizio type fractional derivatives. We refer to \cite{higazy2020AEJ, khan2021AIMS, moore2019ADE, ullah2020DCDSS} for some examples of epidemic models based on the Caputo-Fabrizio fractional operator.

More precisely, here we study the  the fractionary SIS system \eqref{SIS-CF}, which we recall to be
\begin{equation}\label{SIS}
\begin{cases}
^{\text{CF}}D^\alpha_t S= -\left(\frac{\beta}{S+I} S-\gamma\right)I\\
I'= \left(\frac{\beta}{S+I} S-\gamma\right)I\\
S(0)=S_0\\
I(0)=I_0.
\end{cases}
\end{equation}
where $\beta,\gamma>0$, $S_0,I_0\geq 0$, $I_0+S_0>0$. Also recall that the Caputo-Fabrizio operator of order $\alpha\in[0,1]$ for a function $u\in H^1((a,b))$, $a<b$ is 
$$^{\text{CF}}D^\alpha_t u(t):=\frac{M(\alpha)}{1-\alpha}\int_a^t u'(\tau) e^{-\frac{\alpha}{1-\alpha}(t-\tau)}d\tau.$$
where $M(\alpha)$ is a non-negative scaling factor satisfying $M(0)=M(1)=1$.
The indentity
\begin{equation}\label{dD}
\frac{d}{dt} \,^{\text{CF}}D^\alpha_t u(t)=\frac{M(\alpha)}{1-\alpha}u'(t)-\frac{\alpha}{1-\alpha}\,^{\text{CF}}D^\alpha_tu(t)\quad \forall t\in(a,b),\alpha\in[0,1).
\end{equation}
is crucial to reconduct \eqref{SIS} to a system of ordinary differential equations, which is the first step of our investigation. 
\medskip
We make the following key assumption, relating the order of derivation with the system parameters:
\begin{align}\label{alphagamma}
\gamma<\frac{\alpha}{1-\alpha}, \\
M(\alpha)\geq \alpha \quad \forall \alpha\in[0,1].\label{alphaMalpha}
\end{align}
Note that, for any $\gamma>0$ the  condition \eqref{alphagamma} is satisfied by choosing $\alpha$ sufficiently close to $1$ whereas \eqref{alphaMalpha} trivially holds choosing  $M(\alpha)\equiv 1$, which is a setting earlier explored and motivated in \cite{nieto}. 
\subsection{Solutions and equilibria}\label{s31}
As anticipated above, we begin by rewriting   \eqref{SIS} as a system of ordinary equations for $\alpha\in[0,1)$. 
\begin{Theorem}\label{charcf} 
Let  $\beta,\gamma>0$, $S_0,I_0\geq 0$, $I_0+S_0>0$ and 
assume the conditions \eqref{alphagamma} and \eqref{alphaMalpha}. Let 
$$B_\alpha:=\frac{1}{2}(\alpha + M(\alpha)+  (1 - \alpha)( \beta- \gamma)),\qquad C_\alpha:=M(\alpha)(\alpha-(1-\alpha)\gamma)$$
and
\begin{align*}
P_\alpha(S_0,I_0)&:=M(\alpha)S_0+\alpha I_0+(1-\alpha)\left(\beta-\gamma-\frac{\beta}{S_0+I_0} I_0\right)I_0.
\end{align*}
Also define the  function $g_\alpha:\RR\to\RR$
 $$g_\alpha(x):=\frac{-B_\alpha x+P_\alpha(S_0,I_0)/2 + \sqrt{(B_\alpha x-P_\alpha(S_0,I_0)/2 )^2-
	C_\alpha x^2+M(\alpha)P_\alpha(S_0,I_0) x)}}{ M(\alpha)}.$$

Then any couple non-negative absolutely functions continuous $(S(t),I(t))$ is a solution of fractional system \eqref{SIS} if and only if 
%and selecting the only possibly non-negative solution, we obtain for all $t\geq0$
\begin{equation}\label{algebraic2}
\begin{split}S(t)&=g_\alpha(I(t))\qquad \forall t\geq 0
%&=\frac{1}{ M(\alpha)}\left(-B_\alpha I(t)+P_\alpha(S_0,I_0)/2 + \sqrt{(B_\alpha^2-C_\alpha) I^2(t)+(M(\alpha)-B_\alpha)P_\alpha(S_0,I_0) I(t)+P^2_\alpha(I_0)/4}\right)
\end{split}
%\frac{(\bar N_\alpha+\gamma I(t))(1-\alpha)-\alpha I(t)}{M(\alpha)+(1-\alpha)\frac{\beta}{N}I(t)}\quad \forall t>0.
\end{equation}
and
$I(t)$ solves 
\begin{equation}\label{sisI}
\begin{cases}
I'=\bigg(\beta-\gamma-\dfrac{\beta I}{g_\alpha(I)+I}\bigg) I\\
I(0)=I_0.
\end{cases}
\end{equation}
\end{Theorem}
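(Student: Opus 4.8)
The crucial tool is the differentiation identity~\eqref{dD}, which localizes the Caputo-Fabrizio equation, turning it into an ordinary one. The plan is: for the necessity part, to extract from a solution of~\eqref{SIS} an algebraic first integral linking $S$ and $I$, to solve it for $S$ (thereby obtaining the admissible branch $g_\alpha$), and then to read off~\eqref{sisI} from the second scalar equation; for the sufficiency part, to run the same chain of computations in reverse, recovering the first Caputo-Fabrizio equation through~\eqref{dD}. Throughout I abbreviate $P_\alpha:=P_\alpha(S_0,I_0)$.

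For necessity I would first sum the two equations of~\eqref{SIS} to obtain ${}^{\text{CF}}D^\alpha_t S=-I'$, then differentiate in $t$ and apply~\eqref{dD} to get, after multiplying by $1-\alpha$,
\[
	M(\alpha)\,S' + \alpha\,I' + (1-\alpha)\,I'' = 0;
\]
the regularity needed here ($I\in C^1$, $I'$ absolutely continuous, $I''\in L^1$) follows from the second equation together with the continuity of $S,I$. Integrating on $[0,t]$ and using $I'(0)=\big(\beta-\gamma-\tfrac{\beta I_0}{S_0+I_0}\big)I_0$, the constant of integration becomes $P_\alpha$ and one obtains the first integral
\[
	M(\alpha)\,S(t) + \alpha\,I(t) + (1-\alpha)\,I'(t) = P_\alpha, \qquad t\ge 0.
\]
Substituting $I'(t)=\big(\beta-\gamma-\tfrac{\beta I(t)}{S(t)+I(t)}\big)I(t)$ and multiplying by $S(t)+I(t)$ turns this into the quadratic relation
\[
	M(\alpha)\,S^2 + (2B_\alpha I - P_\alpha)\,S + \frac{C_\alpha}{M(\alpha)}\,I^2 - P_\alpha\,I = 0,
\]
whose larger root is exactly $g_\alpha(I)$. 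To see that $g_\alpha$ is well defined, one checks that the radicand $(B_\alpha I - P_\alpha/2)^2 - C_\alpha I^2 + M(\alpha)P_\alpha I$ is a positive-definite quadratic in $I$: this reduces to the algebraic identity $M(\alpha)^2 - 2M(\alpha)B_\alpha + C_\alpha = -M(\alpha)(1-\alpha)\beta$, which gives $B_\alpha^2 - C_\alpha = (B_\alpha-M(\alpha))^2 + M(\alpha)(1-\alpha)\beta > 0$ and makes its discriminant negative (using $P_\alpha>0$); in particular the two roots of the quadratic never coincide. Since $S_0$ solves the quadratic at $t=0$ and
\[
	2M(\alpha)S_0 + 2B_\alpha I_0 - P_\alpha = M(\alpha)(S_0+I_0) + (1-\alpha)\frac{\beta I_0^2}{S_0+I_0} > 0,
\]
$S_0$ is the larger root, i.e.\ $S_0=g_\alpha(I_0)$; as $S(t)$ is continuous and the two branches stay apart, $S(t)\equiv g_\alpha(I(t))$, which is~\eqref{algebraic2}. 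Feeding $S=g_\alpha(I)$ into the second equation of~\eqref{SIS} then yields~\eqref{sisI}.

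For sufficiency, assume $S=g_\alpha(I)$ with $I$ solving~\eqref{sisI}. By the root computation above $S(0)=g_\alpha(I_0)=S_0$, so the initial data match and the second equation of~\eqref{SIS} is precisely~\eqref{sisI}. For the first equation, since $g_\alpha(I)$ is a root of the quadratic and $S+I>0$, one may undo the multiplication by $S+I$ and recover $M(\alpha)S+\alpha I+(1-\alpha)\big(\beta-\gamma-\tfrac{\beta I}{S+I}\big)I=P_\alpha$, that is, using~\eqref{sisI}, $M(\alpha)S+\alpha I+(1-\alpha)I'=P_\alpha$; differentiating gives $\frac{M(\alpha)}{1-\alpha}S'=-\frac{\alpha}{1-\alpha}I'-I''$. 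Then, with $w:={}^{\text{CF}}D^\alpha_t S+I'$, the identity~\eqref{dD} yields
\[
	w' = \frac{M(\alpha)}{1-\alpha}\,S' - \frac{\alpha}{1-\alpha}\,{}^{\text{CF}}D^\alpha_t S + I'' = -\frac{\alpha}{1-\alpha}\,w,
\]
so $w(t)=w(0)\,e^{-\frac{\alpha}{1-\alpha}t}$. Since ${}^{\text{CF}}D^\alpha_t S(0)=0$ one has $w(0)=I'(0)$, so ${}^{\text{CF}}D^\alpha_t S\equiv-I'$ --- hence the first equation of~\eqref{SIS} --- holds exactly when $I'(0)=0$, the compatibility condition that~\eqref{SIS} itself forces at $t=0$ via ${}^{\text{CF}}D^\alpha_t S(0)=0$; on the admissible data the equivalence is then complete.

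The main obstacle, I expect, will not be the algebra (the identity relating $B_\alpha,C_\alpha,M(\alpha)$, the positivity of $P_\alpha$ and of $C_\alpha$ --- exactly where $\gamma<\tfrac{\alpha}{1-\alpha}$ and $M(\alpha)\ge\alpha$ are used --- and the branch selection are all elementary) but rather (a) the regularity bootstrap needed to justify differentiating the first integral, and (b) checking that the solution of~\eqref{sisI} keeps the trajectory in the admissible region $\{S+I>0\}$, equivalently $g_\alpha(I(t))+I(t)>0$ and $g_\alpha(I(t))\ge 0$, so that the passage between~\eqref{algebraic2} and the quadratic is legitimate; (b), together with the initial-layer matching at $t=0$, is the delicate point, the rest being computation.
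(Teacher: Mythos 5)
Your proposal follows essentially the same route as the paper's proof: the identity \eqref{dD} is used to trade the Caputo--Fabrizio equation for an ordinary one, the quantity $M(\alpha)S+\alpha I+(1-\alpha)I'$ is recognized as a first integral equal to $P_\alpha(S_0,I_0)$ (the paper's conserved function $P(t)$), and the resulting quadratic in $S$ is solved with the branch fixed at the initial datum. Two points of divergence are worth recording. First, your branch selection is more complete than the paper's: where the paper simply ``selects the only possibly non-negative solution,'' you verify that $S_0$ is the larger root at $t=0$ and that the two roots never coincide (strictly negative discriminant, which needs $P_\alpha>0$ --- itself a consequence of \eqref{alphagamma}), so continuity forces $S(t)=g_\alpha(I(t))$ for all $t$; this is the cleaner argument. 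Second, your sufficiency analysis via $w:={}^{\text{CF}}D^\alpha_t S+I'$ and $w'=-\tfrac{\alpha}{1-\alpha}w$ exposes a real issue that the paper's proof passes over: differentiating the first equation of \eqref{SIS} loses the information carried at $t=0$, and since ${}^{\text{CF}}D^\alpha_t S(0)=0$ the original system forces the compatibility condition $I'(0)=\bigl(\tfrac{\beta S_0}{S_0+I_0}-\gamma\bigr)I_0=0$. The paper asserts without comment that \eqref{SIS} is equivalent to the differentiated system \eqref{SISordinary}; in fact, for generic initial data the ``if'' direction only recovers the first equation of \eqref{SIS} up to the discrepancy $I'(0)\,e^{-\alpha t/(1-\alpha)}$, exactly as your computation shows. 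Your caveat restricting the equivalence to compatible data is the honest statement, and is a more careful treatment than the one in the paper; the remaining technical items you flag (regularity bootstrap for $I''$, invariance of $\{S+I>0\}$) are treated at essentially the same level of detail in the paper's proof.
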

\begin{proof}

% let us call $f(I,S):=\left(\frac{\beta}{N} S-\gamma\right)I$ and, with this notation
  Preliminarly remark that, by \eqref{dD} and by the first equation of \eqref{SIS}
%$$\frac{d}{dt}D^{\text{CF}}_\alpha S=-\left(\left(\frac{\beta}{N} S-\gamma\right)I\right)'.$$
\begin{align*}
\frac{d}{dt}D^{\text{CF}}_\alpha S&=\frac{M(\alpha)}{1-\alpha}S'-\frac{\alpha}{1-\alpha}D^{CF}_\alpha S\\
&=\frac{M(\alpha)}{1-\alpha}S'+ \frac{\alpha}{1-\alpha}\left(\frac{\beta}{S+I} S-\gamma\right)I.
\end{align*}
By differentiating both sides of the first equation of \eqref{SIS}, we deduce that \eqref{SIS} is equivalent to the ordinary system
\begin{equation}\label{SISordinary}
\begin{cases}
\frac{M(\alpha)}{1-\alpha}S'+ \frac{\alpha}{1-\alpha}\left(\frac{\beta}{S+I} S-\gamma\right)I= -\left(\left(\frac{\beta}{S+I} S-\gamma\right)I\right)'\\
I'= \left(\frac{\beta}{S+I} S-\gamma\right)I\\
S(0)=S_0\\
I(0)=I_0.
\end{cases}
\end{equation}

%
%Replacing the left handside of above equality by \eqref{dD} we get
%%and, on the right handside, $I'$ by the second equation of \eqref{SIS} we get
%$$\frac{M(\alpha)}{1-\alpha}\left(S'-\frac{\alpha}{1-\alpha}D^{CF}_\alpha S\right)=-\left(\left(\frac{\beta}{N} S-\gamma\right)I\right)'$$
%Using again the first equation of \eqref{SIS}
%$$\frac{M(\alpha)}{1-\alpha}\left(S'+\frac{\alpha}{1-\alpha}\left(\frac{\beta}{N} S-\gamma\right)I\right)=-\left(\left(\frac{\beta}{N} S-\gamma\right)I\right)'$$
%Replace the second equation of \eqref{SIS} in the left handside of above equality
%$$\frac{M(\alpha)}{1-\alpha}\left(S'+\frac{\alpha}{1-\alpha}I'\right)=-\left(\left(\frac{\beta}{N} S-\gamma\right)I\right)'$$
%i.e.,
%\begin{equation}\label{Node}
%\frac{M(\alpha)}{1-\alpha}\left( (1-\alpha) S+\alpha I\right)'=-(1-\alpha)\left(\left(\frac{\beta}{N} S-\gamma\right)I\right)'
%\end{equation}
%%therefore, setting 
%%$$N(t):= M(\alpha) S(t)+\frac{\alpha}{1-\alpha}I(t)$$
%%we deduce from \eqref{Node} that, setting $\bar N_\alpha:=M(\alpha) S_0+\frac{\alpha}{1-\alpha}I_0$, $N(t)$ satisfies
%By integrating in both sides we obtain 
Now define
\begin{align*}
P(t):=&M(\alpha)S(t)+\alpha I(t)+ (1-\alpha)\left(\frac{\beta}{S(t)+I(t)} S(t)-\gamma\right)I(t)\\
=&M(\alpha)S(t)+\alpha I(t)+ (1-\alpha)\left(\beta-\gamma-\frac{\beta}{S(t)+I(t)} I(t)\right)I(t)
\quad \forall t\geq 0
\end{align*}
and remark that \eqref{SISordinary} implies
$P'(t)=0$ and consequently $P(t)=P(0)=P_\alpha(S_0,I_0)$ for all $t\geq0$. 
%=\frac{M(\alpha)}{1-\alpha}N+\frac{2\alpha-1}{(1-\alpha)^2}I_0 +\left(\frac{\beta}{N} S_0-\gamma\right)I_0.$$
Then, if  $I(t)$ and $S(t)$ are solutions of \eqref{SISordinary} (or equivalently, of \eqref{SIS}) then 
\begin{equation}\label{algebraic}
M(\alpha)S(t)+\alpha I(t)+ (1-\alpha)\left(\beta-\gamma-\frac{\beta}{S(t)+I(t)} I(t)\right)I(t)=P_\alpha(S_0,I_0).\end{equation}
Solving above equation with respect to $S(t)$
%, setting
%
%$$B_\alpha:=\frac{1}{2}(\alpha + M(\alpha)+  (1 - \alpha)( \beta- \gamma))$$ 
%and
%$$C_\alpha:=M(\alpha)(\alpha-(1-\alpha)\gamma)$$
and selecting the only possibly non-negative solution, we obtain for all $t\geq0$
\begin{equation*}
\begin{split}S(t)=&\frac{1}{ M(\alpha)}\left(-B_\alpha I(t)+P_\alpha(S_0,I_0)/2 \right.\\
&\left.+ \sqrt{(B_\alpha I(t)-P_\alpha(S_0,I_0)/2 )^2-
	C_\alpha I^2(t)+M(\alpha)P_\alpha(S_0,I_0) I(t)}\right)\\
	=&g_\alpha(I(t)).
%&=\frac{1}{ M(\alpha)}\left(-B_\alpha I(t)+P_\alpha(S_0,I_0)/2 + \sqrt{(B_\alpha^2-C_\alpha) I^2(t)+(M(\alpha)-B_\alpha)P_\alpha(S_0,I_0) I(t)+P^2_\alpha(I_0)/4}\right)
\end{split}
%\frac{(\bar N_\alpha+\gamma I(t))(1-\alpha)-\alpha I(t)}{M(\alpha)+(1-\alpha)\frac{\beta}{N}I(t)}\quad \forall t>0.
\end{equation*}
Incidentally notice that if $\alpha=1$ then  $P_1(I_0)=I_0+S_0=:N$, $B_1=C_1=1$ we recover from above relation the classical identity $S(t)=N-I(t)$.
We check that $S(t)$, as a function $g_\alpha$ of $I(t)$, is well defined. To this end set note that  $g_\alpha(x)$ is defined in $\RR$, because 
\begin{align*}&(B_\alpha x-P_\alpha(S_0,I_0)/2 )^2-
	C_\alpha x^2+M(\alpha)P_\alpha(S_0,I_0) x=\\
	&(B_\alpha^2 -C_\alpha)x^2-(B_\alpha-M_\alpha)P_\alpha(S_0,I_0)x+P_\alpha^2(S_0,I_0)/4\geq 0
	\end{align*}
for all $x\in\RR$. More precisely, above inequality holds because the discriminant of above polynomial reads 
$$(B_\alpha-M_\alpha)^2P_\alpha^2(S_0,I_0)-(B_\alpha^2 -C_\alpha)P_\alpha^2(S_0,I_0)=-(1-\alpha)\beta M(\alpha)P_\alpha^2(S_0,I_0)<0$$
for all $\alpha\in[0,1)$, whereas we remarked above that if $\alpha=1$ then $g_\alpha(x)=N-x$.

Plugging the identity $S(t)=g_\alpha(I(t))$ in the second equation of \eqref{SIS} we obtain \eqref{SISordinary}.

Finally we check that $S(t)$ and $I(t)$ are non-negative. By \eqref{algebraic2}, $S(t)\geq 0$ if and only if $I(t)\in [0,P_\alpha(S_0,I_0)/(\alpha-(1-\alpha)\gamma)]$ for all $t\geq 0$. 
  Remark that this condition is satisfied for $t=0$, indeed we have
	\begin{align*}
	P_\alpha(S_0,I_0)&\geq \alpha I_0+(1-\alpha)(\beta-\gamma-\frac{\beta}{S_0+I_0}I_0)I_0\\
	&\geq (\alpha -(1-\alpha)\gamma) I_0.
	\end{align*}
Moreover if $I(t)=P_\alpha(S_0,I_0)/(\alpha-(1-\alpha)\gamma)$ then $I'(t)=-\gamma I(t)<0$ and, consequently $I(t)\leq P_\alpha(S_0,I_0)/(\alpha-(1-\alpha)\gamma)$. Finally, remark that $0$ is an equilibrium for \eqref{SISordinary} and that the velocity field $f_\alpha(I)=\bigg(\beta-\gamma-\dfrac{\beta I}{g_\alpha(I)+I}\bigg) I$ is locally Lipschitz continuous. Therefore, by the local uniqueness of the solutions of \eqref{SISordinary} if $I_0>0$ then $I(t)>0$ for all $t$. 
\end{proof}

%
%$$\left(\frac{\beta}{N} S(t)-\gamma\right)I(t)=\bar N_\alpha-\left(\frac{M(\alpha)}{1-\alpha}S(t)+\frac{M(\alpha)\alpha}{(1-\alpha)^2} I(t)\right)\quad \forall t>0$$
%Then \eqref{SISordinary} reduces to the non-linear Cauchy problem in $I$
\subsubsection{Equilibria}
We now characterize the equilibria and we study the asymptotic behavior of \eqref{SIS}.
\begin{Proposition}\label{pequilibria}
Assume conditions \eqref{alphagamma} and \eqref{alphaMalpha} and let $S_0,I_0\geq 0$. The equilibria of the system \eqref{sisI} are $0$ and 
$$E_\alpha(S_0,I_0):=\frac{(\beta - \gamma) P_\alpha(S_0,I_0)}{\alpha(\beta-\gamma)+M(\alpha)\gamma}.$$
Setting $R:=\beta/\gamma$, $E_\alpha(I_0)>0$ if and only if $R>1$. In this case
$$I(t)\to E_\alpha(S_0,I_0), \quad S(t)\to \frac{\gamma}{\beta-\gamma} E_\alpha(S_0,I_0)= \frac{\gamma P_\alpha(S_0,I_0)}{\alpha(\beta-\gamma)+M(\alpha)\gamma} \qquad \text{as }t\to+\infty.$$
Finally, if $E_\alpha(S_0,I_0)\leq0$, that is if $R\leq 1$, then 
$$I(t)\to 0, \quad S(t)\to P_\alpha(S_0,I_0) \qquad \text{as }t\to+\infty.$$
\end{Proposition}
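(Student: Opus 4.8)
The plan is to run everything through Theorem \ref{charcf}, which turns the problem of solving \eqref{SIS} into that of solving the scalar autonomous Cauchy problem \eqref{sisI} for $I(t)$ and then recovering $S(t)=g_\alpha(I(t))$ via \eqref{algebraic2}. From its proof I use without comment that the velocity field $f_\alpha(I):=\big(\beta-\gamma-\frac{\beta I}{g_\alpha(I)+I}\big)I$ is locally Lipschitz, that every solution is globally defined with $0\le I(t)\le P_\alpha(S_0,I_0)/(\alpha-(1-\alpha)\gamma)=:I_{\max}$, and that $I(t)>0$ for all $t$ when $I_0>0$. To locate the equilibria I set $f_\alpha(I)=0$: either $I=0$, or $\beta-\gamma-\frac{\beta I}{g_\alpha(I)+I}=0$, equivalently $g_\alpha(I)=\frac{\gamma}{\beta-\gamma}I$ when $\beta\ne\gamma$. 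Since $g_\alpha$ is, by construction, the branch solving \eqref{algebraic}, the identity $M(\alpha)g_\alpha(I)+\alpha I+(1-\alpha)\big(\beta-\gamma-\frac{\beta I}{g_\alpha(I)+I}\big)I=P_\alpha(S_0,I_0)$ holds for every $I$ in $[0,I_{\max}]$; at a nonzero equilibrium its last summand vanishes, leaving $M(\alpha)g_\alpha(I)+\alpha I=P_\alpha(S_0,I_0)$, so substituting $g_\alpha(I)=\frac{\gamma}{\beta-\gamma}I$ and solving gives the single value $I=E_\alpha(S_0,I_0)$; conversely $E_\alpha$ is an equilibrium, since the value $S=\frac{\gamma}{\beta-\gamma}E_\alpha$ solves \eqref{algebraic} at $I=E_\alpha$ and is the non-negative branch $g_\alpha(E_\alpha)$ when $R>1$. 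Thus the equilibria of \eqref{sisI} are exactly $0$ and $E_\alpha(S_0,I_0)$ (with $E_\alpha=0$ when $\beta=\gamma$).

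Next I would settle the sign of $E_\alpha(S_0,I_0)$. As observed in the proof of Theorem \ref{charcf}, $P_\alpha(S_0,I_0)\ge(\alpha-(1-\alpha)\gamma)I_0\ge0$, and in fact $P_\alpha(S_0,I_0)>0$ because $S_0+I_0>0$ while $M(\alpha)\ge\alpha>0$ and $\alpha-(1-\alpha)\gamma>0$ by \eqref{alphagamma} and \eqref{alphaMalpha}. Moreover $M(\alpha)\ge\alpha$ forces $\alpha(\beta-\gamma)+M(\alpha)\gamma\ge\alpha(\beta-\gamma)+\alpha\gamma=\alpha\beta>0$. Hence $E_\alpha(S_0,I_0)$ has the same sign as $\beta-\gamma$, so it is positive exactly when $R>1$ and nonpositive exactly when $R\le1$.

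For the asymptotics I rely on the phase-line analysis of the scalar autonomous equation \eqref{sisI}: since its right-hand side is locally Lipschitz, every solution is monotone (the derivative cannot change sign without the trajectory crossing an equilibrium, which uniqueness forbids) and, being confined to $[0,I_{\max}]$, converges as $t\to+\infty$ to an equilibrium in $[0,I_{\max}]$. Suppose $R>1$ and $I_0>0$. As $I\to0^+$ one has $f_\alpha(I)/I=\beta-\gamma-\frac{\beta I}{g_\alpha(I)+I}\to\beta-\gamma>0$ (using $g_\alpha(0)=P_\alpha(S_0,I_0)/M(\alpha)>0$), so $f_\alpha(I)>0$ for all sufficiently small $I>0$ and no trajectory with $I_0>0$ can tend to $0$; since $E_\alpha$ is the only other equilibrium and an elementary inequality (using $\beta>\gamma$, $\alpha<1$, $M(\alpha)\ge0$) gives $0<E_\alpha<I_{\max}$, it follows that $I(t)\to E_\alpha$. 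Letting $t\to+\infty$ in $S(t)=g_\alpha(I(t))$ and invoking the equilibrium relation $g_\alpha(E_\alpha)=\frac{\gamma}{\beta-\gamma}E_\alpha$ then gives $S(t)\to\frac{\gamma}{\beta-\gamma}E_\alpha=\frac{\gamma P_\alpha(S_0,I_0)}{\alpha(\beta-\gamma)+M(\alpha)\gamma}$; the case $I_0=0$ is degenerate, with $I\equiv0$. If instead $R\le1$, then for every $I\in(0,I_{\max}]$ we have $\beta-\gamma\le0$ and $\frac{\beta I}{g_\alpha(I)+I}>0$, hence $f_\alpha(I)<0$; so $I(t)$ decreases to the only equilibrium of \eqref{sisI} in $[0,I_{\max}]$, namely $0$, and $S(t)=g_\alpha(I(t))\to g_\alpha(0)=P_\alpha(S_0,I_0)/M(\alpha)$, which equals $P_\alpha(S_0,I_0)$ under the normalization $M(\alpha)\equiv1$.

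I expect the main obstacle to lie not in any single computation but in the coordination between the invariant interval $[0,I_{\max}]$ — which is available precisely because $S=g_\alpha(I)\ge0$ only there — and the location of the equilibria: the statement forces me to check that, when $R>1$, $E_\alpha$ sits strictly inside $[0,I_{\max}]$ and that $0$ is repelling from the right, since this is exactly what fixes the basin of attraction and hence the announced limits; it is also the only point at which \eqref{alphagamma} and \eqref{alphaMalpha} are genuinely used.
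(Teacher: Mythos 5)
Your proposal is correct, and it reaches the conclusion by a genuinely different (and more explicit) route than the paper. The paper's proof is very compressed: it asserts that the equilibria follow ``by a direct computation'', invokes the invariance of the non-negative half-line from Theorem \ref{charcf}, and then claims that $V(x):=(x-x_R)^2$ (with $x_R$ the relevant equilibrium) is a Lyapunov function, from which global asymptotic stability follows. You instead (i) derive $E_\alpha$ cleanly from the conserved quantity: at a nonzero equilibrium the $(1-\alpha)$-term of \eqref{algebraic} vanishes, leaving $M(\alpha)g_\alpha(I)+\alpha I=P_\alpha$ together with $g_\alpha(I)=\frac{\gamma}{\beta-\gamma}I$, which pins down $I=E_\alpha$ — this is a nicer derivation than brute-force root-finding on $g_\alpha$; and (ii) replace the Lyapunov argument by the phase-line analysis of the scalar autonomous equation \eqref{sisI} on the compact invariant interval $[0,I_{\max}]$. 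The two are essentially equivalent in content — the Lyapunov derivative is $\dot V=2(x-x_R)f_\alpha(x)$, so verifying the paper's claim amounts to exactly the sign check of $f_\alpha$ that you carry out — but your version makes explicit the points the paper leaves implicit: that $0<E_\alpha<I_{\max}$ when $R>1$, that $0$ is repelling from the right, and that monotone bounded solutions of a locally Lipschitz scalar ODE converge to equilibria. One further point in your favour: your computation $S(t)\to g_\alpha(0)=P_\alpha(S_0,I_0)/M(\alpha)$ in the case $R\leq 1$ is the correct limit (it is consistent with Proposition \ref{pN}, which gives $N(t)\to P_\alpha/M(\alpha)$); the statement's ``$S(t)\to P_\alpha(S_0,I_0)$'' silently assumes the normalization $M(\alpha)\equiv 1$, as you observe.
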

\begin{proof}
	The first part of the claim follows by a direct computation, using in particular  \eqref{alphaMalpha}  and the fact that $\beta,\gamma>0$ implies $R>0$. Let $x_R=E_\alpha(S_0,I_0)$ if $R>1$ and $x_R=0$ if $R\leq 0$. We proved that in Theorem \ref{charcf} that $I(t)$ is non-negative, and consequently $[0,+\infty)$ is an invariant set for the dynamics \eqref{SISordinary}. Moreover, by a direct computation one can check that the function $V(x):=(x-x_R)^2$ is a Lyapunov function for \eqref{SISordinary} in $[0,+\infty)$ and, consequently, $x_R$ is a globally asymptocally stable equilibrium for \eqref{SISordinary} and this concludes the proof. \end{proof} 
	
Note that, as in the classical case, the qualitative properties of the system strongly depend on the reproduction number $R=\beta/\gamma$. The value $E_\alpha(S_0,I_0)$ can be viewed as a fractional generalization of the endemic equilibrium of classical SIS models, which is a stable equilibrium when $R>1$. 

\medskip
The next result deals with the monotonicity and the asymptotic behaviour of the function $N(t):=S(t)+I(t)$, which is not constant unless we are in the ordinary case $\alpha=1$. As we show below, the asymptotic behaviour of $N(t)$ can be used for inverse problems, i.e., the goal to reconstruct the fractional order $\alpha$ of \eqref{SIS} from the observed data on the long range. 

\begin{Proposition}\label{pN}
	Assume $S_0,I_0> 0$ and $\alpha\in[0,1)$. If $R>1$ and if $I_0\in(0,E(S_0,I_0))$ then $N(t):=I(t)+S(t)$ is a strictly increasing function converging to 
$$N_\alpha(S_0,I_0):=\frac{\beta}{\beta-\gamma}E_\alpha(S_0,I_0)=\frac{\beta P_\alpha(S_0,I_0)}{\alpha(\beta-\gamma)+M(\alpha)\gamma}$$
as $t\to+\infty$.
If otherwise either $R> 1$ and $I_0>E_\alpha(S_0,I_0)$ or $R\leq 1$,  then $N(t)$ is  strictly decreasing and tends to $ P_\alpha(S_0,I_0)/M(\alpha)$ as $t\to +\infty$.
\end{Proposition}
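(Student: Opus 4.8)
The plan is to reduce the whole statement to the scalar dynamics of $I(t)$ already settled in Proposition \ref{pequilibria}, through the algebraic link $S(t)=g_\alpha(I(t))$ of Theorem \ref{charcf}. Put $\phi(x):=g_\alpha(x)+x$, so that $N(t)=\phi(I(t))$. Using the first integral $M(\alpha)S+\alpha I+(1-\alpha)I'\equiv P_\alpha(S_0,I_0)$ — which is exactly the identity $P'(t)=0$ from the proof of Theorem \ref{charcf}, rewritten with $I'=(\tfrac{\beta S}{S+I}-\gamma)I$ — together with $S=N-I$, one checks that $N$ obeys
\[
M(\alpha)N^2-\bigl(P_\alpha(S_0,I_0)+\mu I\bigr)N-(1-\alpha)\beta I^2=0,\qquad \mu:=M(\alpha)-\alpha-(1-\alpha)(\beta-\gamma),
\]
so that $\phi(x)$ is the larger root of $t\mapsto M(\alpha)t^2-(P_\alpha(S_0,I_0)+\mu x)t-(1-\alpha)\beta x^2$, i.e.
\[
\phi(x)=\frac{(P_\alpha(S_0,I_0)+\mu x)+\sqrt{(P_\alpha(S_0,I_0)+\mu x)^2+4M(\alpha)(1-\alpha)\beta x^2}}{2M(\alpha)}.
\]
In particular $\phi\in C^1([0,+\infty))$, $\phi(0)=P_\alpha(S_0,I_0)/M(\alpha)$, and $\phi(E_\alpha(S_0,I_0))=N_\alpha(S_0,I_0)$, the last equality being verified by substituting $E_\alpha(S_0,I_0)$ and $N_\alpha(S_0,I_0)=\tfrac{\beta}{\beta-\gamma}E_\alpha(S_0,I_0)$ into the quadratic identity.

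Next I would record the behaviour of $I$ from Proposition \ref{pequilibria} and its proof: when $R>1$ the point $E_\alpha(S_0,I_0)$ is the unique positive zero of the velocity $I\mapsto(\beta-\gamma-\tfrac{\beta I}{g_\alpha(I)+I})I$ and is globally asymptotically stable on $(0,+\infty)$, hence $I(\cdot)$ is strictly increasing to $E_\alpha(S_0,I_0)$ when $I_0\in(0,E_\alpha(S_0,I_0))$ and strictly decreasing to it when $I_0>E_\alpha(S_0,I_0)$; when $R\le1$ the velocity is strictly negative for every $I>0$, so $I(\cdot)$ is strictly decreasing to $0$. Since $N(t)=\phi(I(t))$ gives $N'(t)=\phi'(I(t))\,I'(t)$, the convergence assertions for $N$ follow from continuity of $\phi$ and the identification of $\lim_t I(t)$ above, namely $N(t)\to N_\alpha(S_0,I_0)=\phi(E_\alpha(S_0,I_0))$ if $R>1$ and $N(t)\to P_\alpha(S_0,I_0)/M(\alpha)=\phi(0)$ if $R\le1$. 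What remains is to show $\phi'>0$ on the subinterval traced by $I$ along the trajectory (endpoints $I_0$ and $\lim_t I(t)$); then $N'(t)$ has the sign of $I'(t)$ and the stated strict monotonicity follows.

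To control $\phi'$, differentiating the quadratic identity yields $\phi'(x)=\dfrac{\mu\,\phi(x)+2(1-\alpha)\beta x}{\sqrt{(P_\alpha(S_0,I_0)+\mu x)^2+4M(\alpha)(1-\alpha)\beta x^2}}$, so $\phi'(x)$ and $\mu\,\phi(x)+2(1-\alpha)\beta x$ have the same sign. If $R\le1$ then $\beta\le\gamma$, hence $\mu\ge M(\alpha)-\alpha\ge0$ by \eqref{alphaMalpha}, so $\phi'\ge0$ on all of $[0,+\infty)$ and the strictly decreasing case is complete. If $R>1$ the sign of $\mu$ is not a priori controlled; but at the equilibrium one computes $\mu\,\phi(E_\alpha(S_0,I_0))+2(1-\alpha)\beta E_\alpha(S_0,I_0)=\tfrac{\beta E_\alpha(S_0,I_0)}{\beta-\gamma}\bigl(M(\alpha)-\alpha+(1-\alpha)(\beta-\gamma)\bigr)>0$, using \eqref{alphaMalpha} and $\alpha<1<R$. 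The plan in this case is to propagate this positivity to the whole interval between $I_0$ and $E_\alpha(S_0,I_0)$, exploiting that $I(\cdot)$ is monotone and that $[0,+\infty)$ is forward invariant for \eqref{sisI}, so that $\phi'(I(t))>0$ for all $t\ge0$; combined with the sign of $I'(t)$ from the previous step this gives $N'(t)$ strictly of the required sign.

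The step I expect to be the genuine obstacle is precisely this last propagation when $R>1$ and $\mu<0$: one must show that the sign condition $\mu\,\phi(I)+2(1-\alpha)\beta I>0$ — equivalently, the ratio bound $S(t)/I(t)<\dfrac{M(\alpha)-\alpha+(1-\alpha)(\beta+\gamma)}{(1-\alpha)(\beta-\gamma)-(M(\alpha)-\alpha)}$ — is preserved along the flow of \eqref{sisI}. This is where the standing hypotheses \eqref{alphagamma} and \eqref{alphaMalpha}, and the a priori localization $0\le I(t)\le P_\alpha(S_0,I_0)/(\alpha-(1-\alpha)\gamma)$ obtained in Theorem \ref{charcf} (which keeps $(S,I)$ in the positive quadrant), all have to be brought to bear; this is the portion of the argument I would carry out in full detail, the remaining steps being the routine composition $N=\phi\circ I$ and the passage to the limit.
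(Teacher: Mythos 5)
Your reduction is sound and, up to notation, identical in substance to the paper's: your $\phi=g_\alpha+\mathrm{id}$ satisfies $\phi'=1+g_\alpha'$, the quadratic identity for $N$ is a correct repackaging of the conservation law $P(t)\equiv P_\alpha(S_0,I_0)$, and the identities $\phi(0)=P_\alpha(S_0,I_0)/M(\alpha)$, $\phi(E_\alpha(S_0,I_0))=N_\alpha(S_0,I_0)$ together with $N'=\phi'(I)\,I'$ and Proposition \ref{pequilibria} correctly deliver the limits and reduce everything to the sign of $\phi'$ along the range of $I(t)$. Your treatment of $R\le 1$ is complete and in fact cleaner than the paper's: $\mu=M(\alpha)-\alpha-(1-\alpha)(\beta-\gamma)\ge 0$ by \eqref{alphaMalpha} forces $\mu\phi(x)+2(1-\alpha)\beta x>0$ for all $x>0$, with no pointwise evaluation needed.

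The genuine gap is exactly where you flag it: for $R>1$ you verify $\phi'>0$ only at the single point $E_\alpha(S_0,I_0)$ and leave the ``propagation'' as a plan. The device the paper uses to close this is one you never invoke: \emph{convexity} of $g_\alpha$ (equation \eqref{gconvex}; equivalently $\phi''\ge 0$, which also follows from your formula, since $\phi$ is an affine function plus the square root of a positive-definite quadratic). Convexity makes $\phi'$ non-decreasing, so positivity of $\phi'$ at the \emph{left} endpoint of the interval swept by $I(t)$ propagates rightward. With that observation your computation $\phi'(E_\alpha(S_0,I_0))>0$ immediately finishes the sub-case $I_0>E_\alpha(S_0,I_0)$, where $I(t)\in(E_\alpha(S_0,I_0),I_0]$. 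But it cannot finish the sub-case $I_0\in(0,E_\alpha(S_0,I_0))$, where $I(t)\in[I_0,E_\alpha(S_0,I_0))$ lies to the \emph{left} of the point you checked; there one needs $\phi'(I_0)>0$, i.e.\ $\mu(S_0+I_0)+2(1-\alpha)\beta I_0>0$ in your notation, which the paper asserts through a separate direct computation and which does not follow from \eqref{alphagamma}--\eqref{alphaMalpha} alone (e.g.\ $M\equiv 1$, $\alpha=1/2$, $\gamma=0.1$, $\beta=10$, $S_0=100$, $I_0=1$ gives $\mu(S_0+I_0)+2(1-\alpha)\beta I_0<0$). So your instinct that this is the delicate point is correct, but the proposal as written does not prove the increasing case, and no routine invariance argument of the kind you sketch will do so without an additional hypothesis or computation at $x=I_0$.
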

\begin{proof}
We preliminary remark that $g_\alpha(x)$ is a  convex function, indeed 
\begin{equation}\label{gconvex}
g_\alpha''(x)=\frac{P_\alpha^2(S_0,I_0)(1-\alpha)\beta}{4 M(\alpha)\sqrt{(B_\alpha x-P_\alpha(S_0,I_0)/2)^2-C_\alpha x^2+M(\alpha)P_\alpha(S_0,I_0)}}\geq 0 \quad \forall x\in\RR.
\end{equation}
Moreover, by a direct computation, 
\begin{equation*}
1+g'_\alpha(I_0)=1+\frac{2 (1-\alpha) \beta  M(\alpha) (I_0+S_0)^3 P_\alpha(S_0,I_0)^2}{\left| I_0 (I_0+S_0)^2+(1-\alpha)
   I_0^2 \beta\right| ^3}> 0\end{equation*}
 and this, together with the convexity of $g_\alpha$, implies 
\begin{equation}\label{ginitial}
1+g'_\alpha(x)>0 \quad \forall x\geq I_0.
\end{equation}
Also remark that, using the assumption \eqref{alphaMalpha}, i.e., $M(\alpha)\geq \alpha$, we obtain 
\begin{align*}
1+g'_\alpha(E_\alpha(S_0,I_0))=&1+ \frac{M(\alpha) \left((1-\alpha)(\beta - \gamma)\gamma -\alpha \beta \right)}{(1-\alpha) (\beta -\gamma )^2+\beta  M(\alpha)} \\
\geq& 1+ \frac{M(\alpha) \left((1-\alpha)(\beta - \gamma)\gamma - \beta \right)}{(1-\alpha) (\beta -\gamma )^2+\beta  M(\alpha)} \\
=& \frac{(1-\alpha)(\beta-\gamma)(M(\alpha)\gamma+\beta-\gamma)}{(1-\alpha) (\beta -\gamma )^2+\beta  M(\alpha)} 
   \end{align*}
therefore 
\begin{equation}\label{ge}
\beta>\gamma\quad \Rightarrow\quad1+g'_\alpha(x)>0 \quad \forall x\geq E_0(S_0,I_0) .
\end{equation}

We conclude this preliminary study on $g_\alpha$ by noticing that
\begin{equation*}
1+g'_\alpha(0)=(1-\alpha)(1-\beta+\gamma)\end{equation*}
therefore 
\begin{equation}\label{g0}
\beta\leq\gamma\quad \Rightarrow\quad1+g'_\alpha(x)>0 \quad \forall x\geq 0 .
\end{equation}

Now,  by Theorem \ref{charcf} 
\begin{equation}\label{N}N'(t)=I'(t)+S'(t)=I'(t)(1+g'_\alpha(I(t)).\end{equation}
Assume $R>1 \text{ and } I_0\in(0,E(S_0,I_0))$. Then 
\begin{equation}\label{rel}
I'(t)=\left(\beta-\gamma-\frac{\beta I(t)}{I(t)+g_\alpha(I(t))}\right)I(t)>0 \qquad \forall t\geq 0\end{equation}
In particular $I(t)\geq I_0$ for all $t\geq 0$ and, in view of \eqref{ginitial} and of \eqref{N}, we deduce $N'(t)>0$ for all $t>0$, hence $N(t)$ is strictly increasing.

 Assume now that either $R\leq 1 \text{ or } I_0\geq E(S_0,I_0)$.
 
  First we discuss the case in which $R>1$ and $I_0> E(S_0,I_0)$. Then $I'(t)<0$ and, since $E_\alpha(S_0,I_0)$ is a (globally asymptotically stable) equilibrium by Proposition \ref{pequilibria}, then $I(t)>E_\alpha(S_0,I_0)$ for all $t\geq 0$. Then, in view of \eqref{ge} and of \eqref{N}, we deduce $N'(t)<0$ for all $t\geq 0$.

Finally, if $R\leq 1$ then $I'(t)<0$. Since $I(t)> 0$ for all $t>0$ then, in view of \eqref{g0} and of \eqref{N}, we deduce $N'(t)<0$ for all $t\geq 0$ and this concludes the proof. \end{proof}

From above result, we can estimate the fractional order $\alpha$ from the asymptotic behaviour of the total population $N(t)$. Indeed, if $R>1$ and $N(t)\to N_\infty$ then the corresponding fractional order $\alpha$ is the solution of the equation
$$\frac{\beta P_\alpha(S_0,I_0)}{\alpha(\beta-\gamma)+M(\alpha)\gamma}=N_\infty.$$
Assuming as in \cite{nieto} $M(\alpha)\equiv 1$, and setting $N_0:=I_0+S_0$, above equation reduces to the explicit formula

$$\alpha=1-\frac{(N_\infty-N_0)N_0\beta}{\beta I_0 S_0 -\beta N_0 (I_0 + \gamma I_0 - N_\infty) - \gamma N_0 N_\infty}.$$
Note in particular that $\alpha=1$ if and only if $N_\infty=N_0$, confirming the fact that, in the proposed mixed fractional model, $N(t)$ is constant if and only if $\alpha=1$. 
%which, also reads
%\begin{equation}\label{fraclogistic}\begin{cases}
%I'=\left(\beta\dfrac{\bar N_\alpha(I_0)(1-\alpha)}{N}-\gamma-\dfrac{\beta}{N}I(\alpha -\gamma(1-\alpha))\right)\dfrac{I}{M(\alpha)+(1-\alpha)\frac{\beta}{N}I(t)}\\
%I(0)=I_0.
%\end{cases}\end{equation}

%\cate
{
\subsection{Numerical simulations}\label{s32}
The numerical discretization of \eqref{SIS} is based on the results of Theorem \ref{charcf}. Let us consider again the numerical grid introduced in Section \ref{sec:CCnum}. To compute the discrete evolution in time of $I$ and $S$ we first solve system \eqref{sisI} by a proper ODE solver and then we discretize \eqref{algebraic2}. Specifically, we use the MATLAB tool \texttt{ode23t} to compute $I^{n+1}$ and then, following \eqref{algebraic2}, we obtain $S^{n+1}=g_{\alpha}(I^{n+1})$.

\begin{figure}[h!]
\subfloat[][$S(t)$]{
\includegraphics[width=0.3\columnwidth]{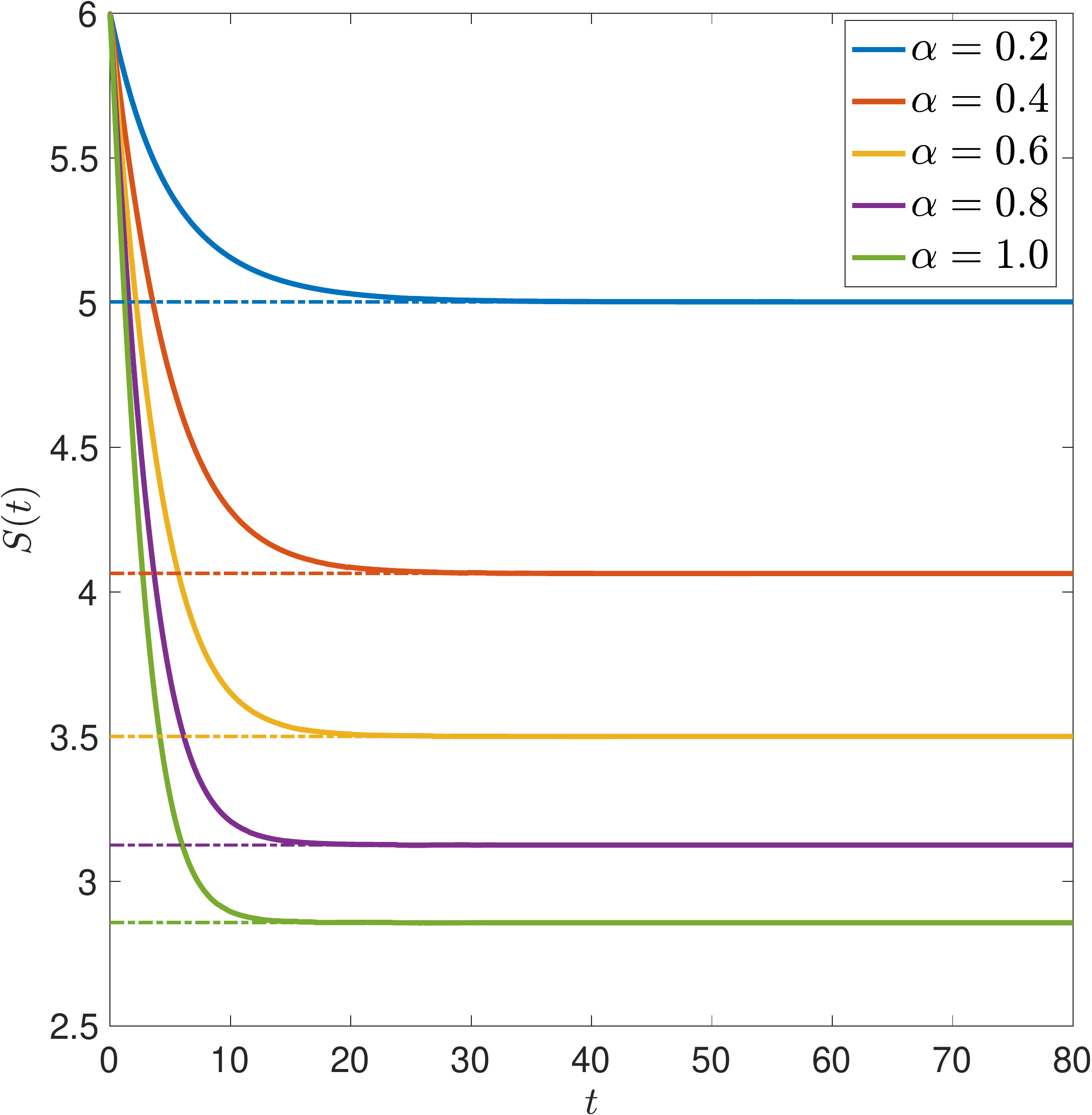}
}
\quad
\subfloat[][$I(t)$]{
\includegraphics[width=0.3\columnwidth]{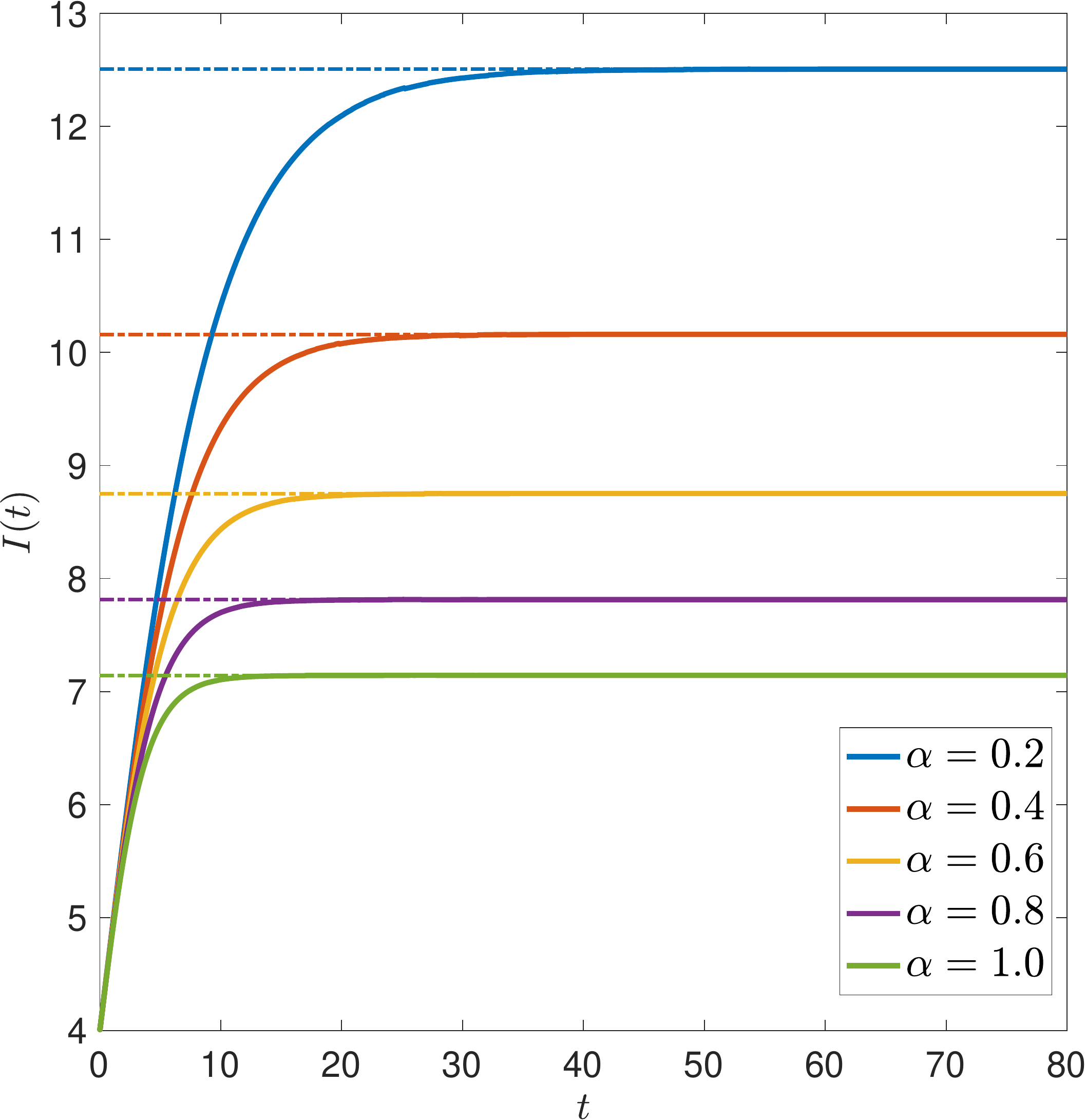}
}
\quad
\subfloat[][$S(t)+I(t)$]{
\includegraphics[width=0.3\columnwidth]{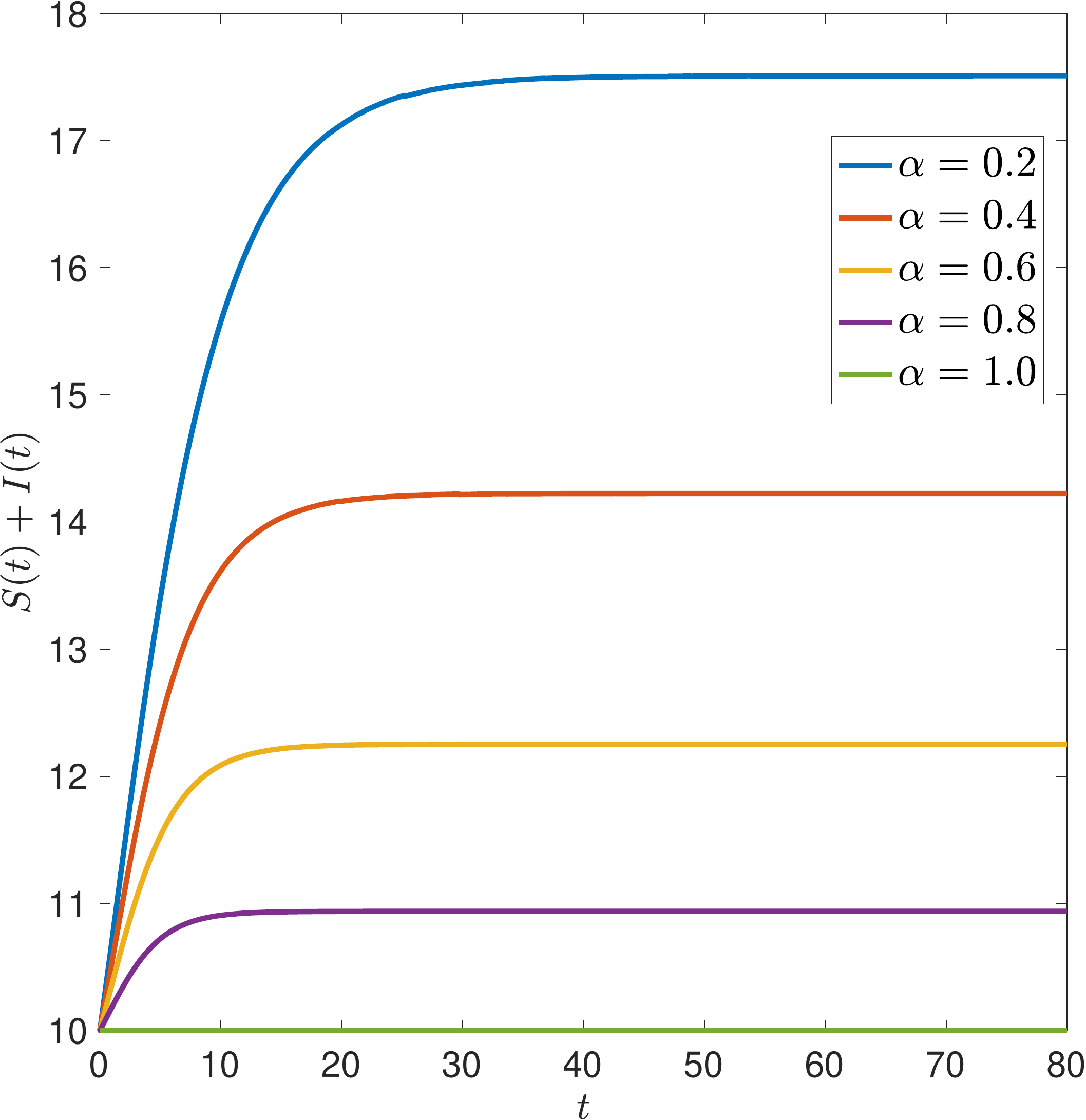}
}
\caption{Numerical solutions to \eqref{SIS} for  $\beta=0.7$, $\gamma=0.2$, $S_{0}=6$, $I_{0}=4$, and $M(\alpha)\equiv1$ as $\alpha$ changes.}
\label{fig:CF1}
\end{figure}

\begin{figure}[h!]
\subfloat[][$S(t)$]{
\includegraphics[width=0.3\columnwidth]{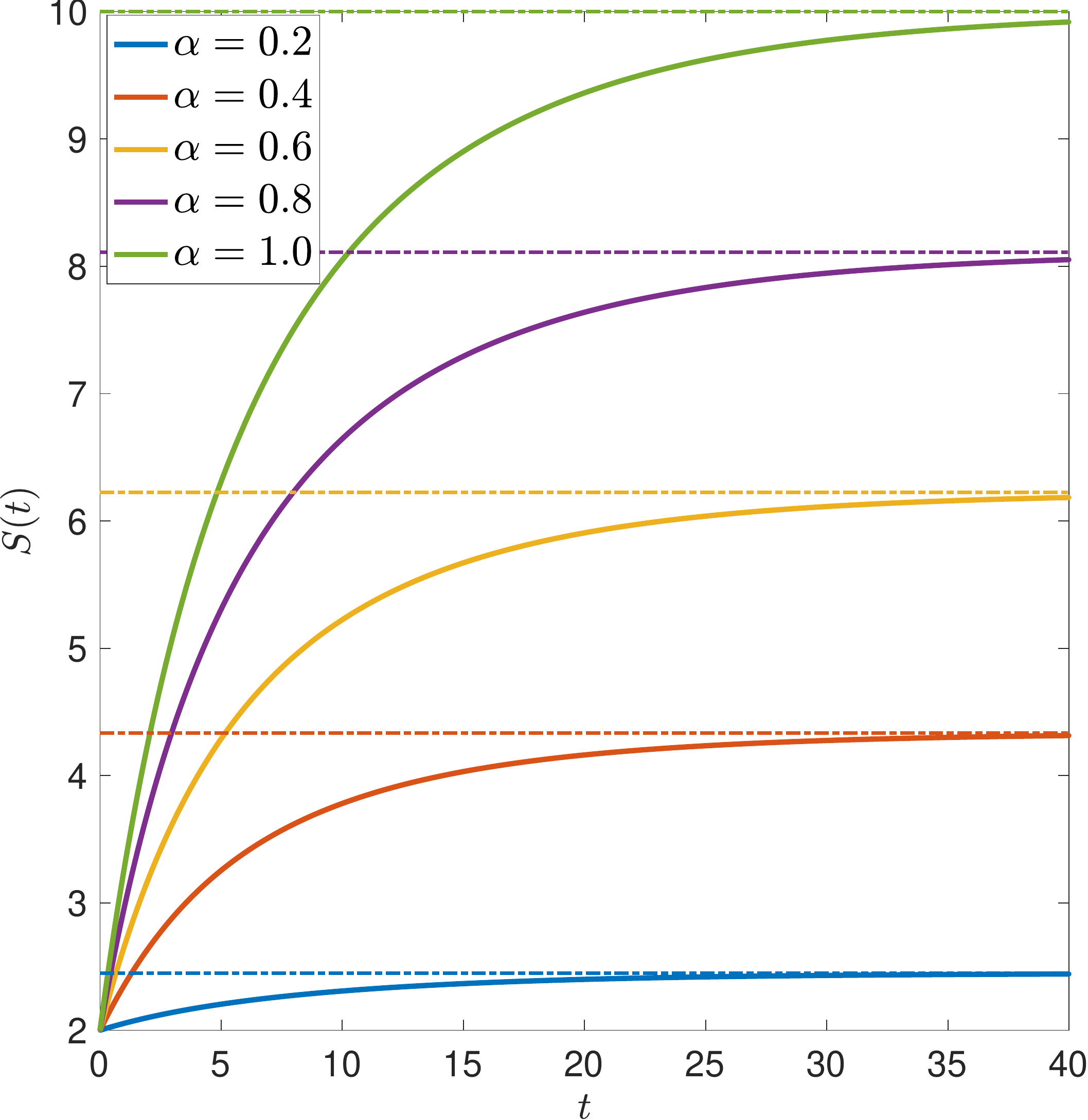}
}
\quad
\subfloat[][$I(t)$]{
\includegraphics[width=0.3\columnwidth]{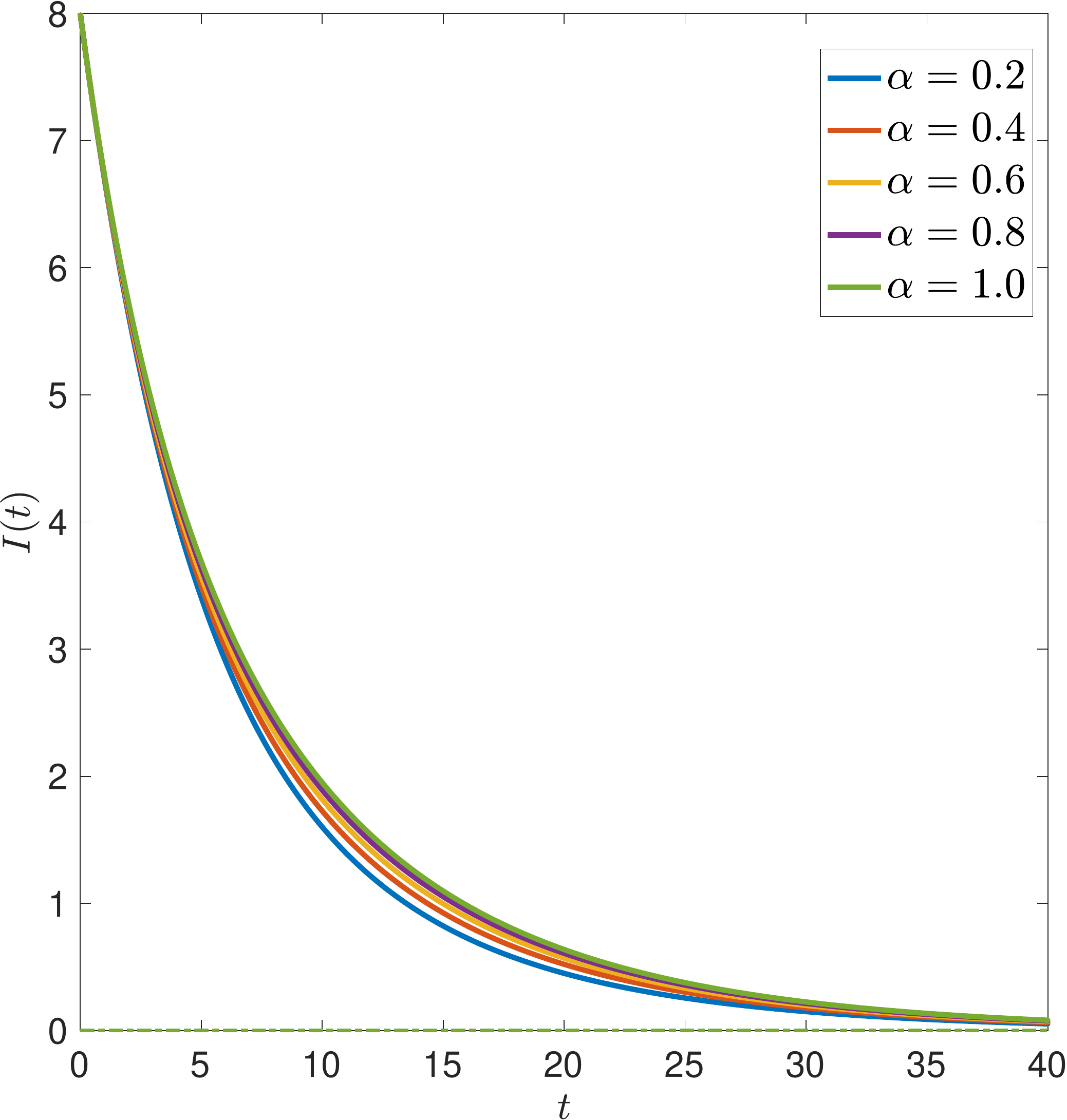}
}
\quad
\subfloat[][$S(t)+I(t)$]{
\includegraphics[width=0.3\columnwidth]{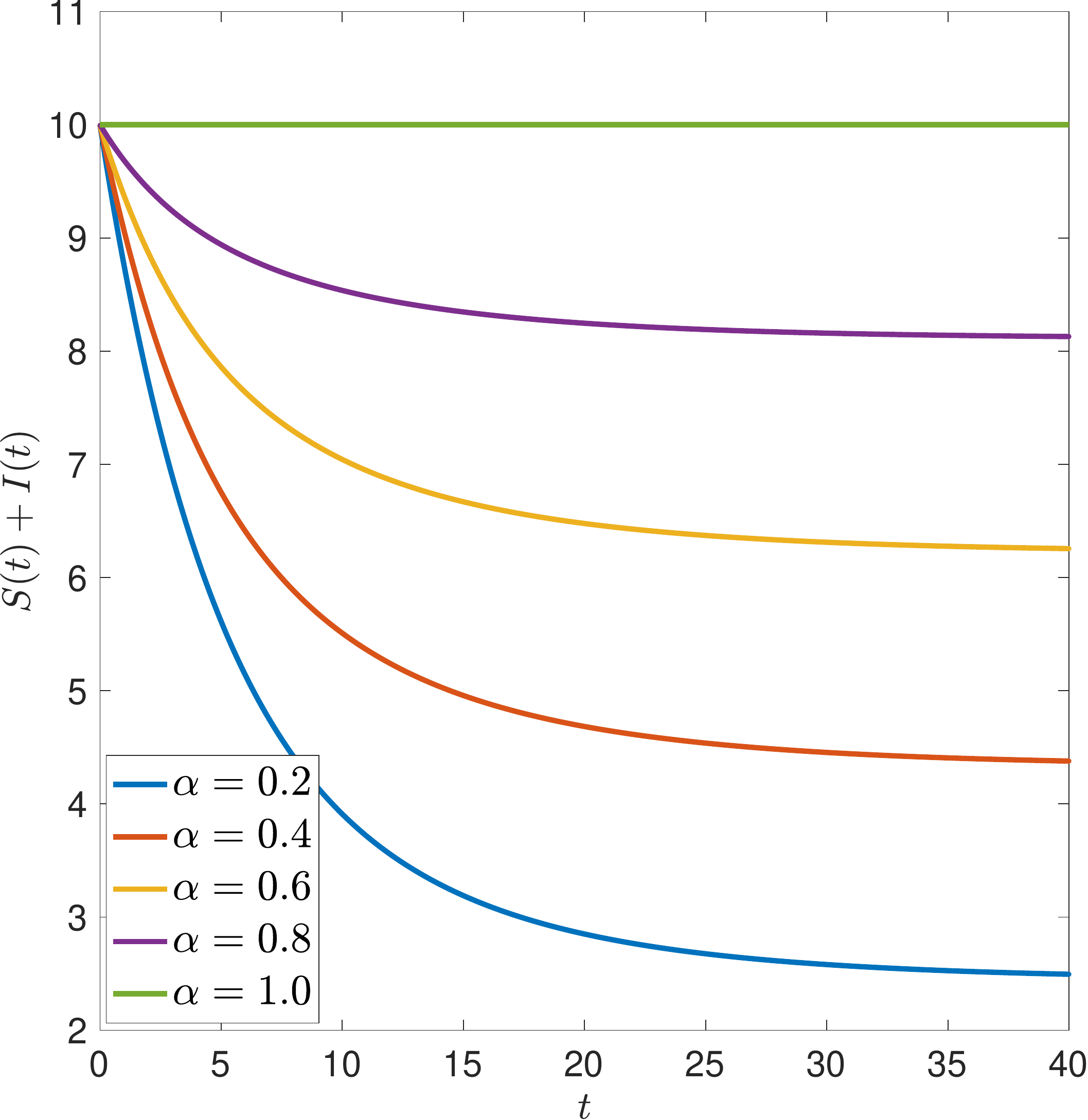}
}
\caption{Numerical solutions to \eqref{SIS} with   $\beta=0.1$, $\gamma=0.2$, $S_{0}=6$, $I_{0}=4$, and $M(\alpha)\equiv1$ as $\alpha$ changes.}
\label{fig:CF2}
\end{figure}

In Figures \ref{fig:CF1} and \ref{fig:CF2} we show the results obtained as $\alpha$ changes in $\{0.2,0.4,0.6,0.8,1\}$. The parameters $S_{0}$, $I_{0}$, $M(\alpha)$ and $\gamma$ are the same in all simulations, fixed as $S_{0}=6$, $I_{0}=4$, $M(\alpha)\equiv1$ and $\gamma=0.2$. The parameter $\beta$, instead, is set to $\beta=0.7$ in Figure \ref{fig:CF1}, which implies $R>1$, while $\beta=0.1$ in Figure \ref{fig:CF2}, and thus $R<1$. The dashed lines represent the equilibria estimated in Proposition \ref{pequilibria}. Both $S$ and $I$ monotonically converge to their equilibria. Note that the time delay induced by the fractional order emerges in the fact that the smallest is $\alpha$, the slowest is the convergence of the related solutions to equilibria. Finally remark  that for $\alpha=1$ the sum $N(t)=S(t)+I(t)$ is constant, since \eqref{SIS} coincides with the classical SIS model, where if $\alpha<1$ then the monotonicity of $N(t)$ is in agreement with Theorem \ref{pN}.

\begin{figure}[h!]
\subfloat[][$S(t)$]{
\includegraphics[width=0.3\columnwidth]{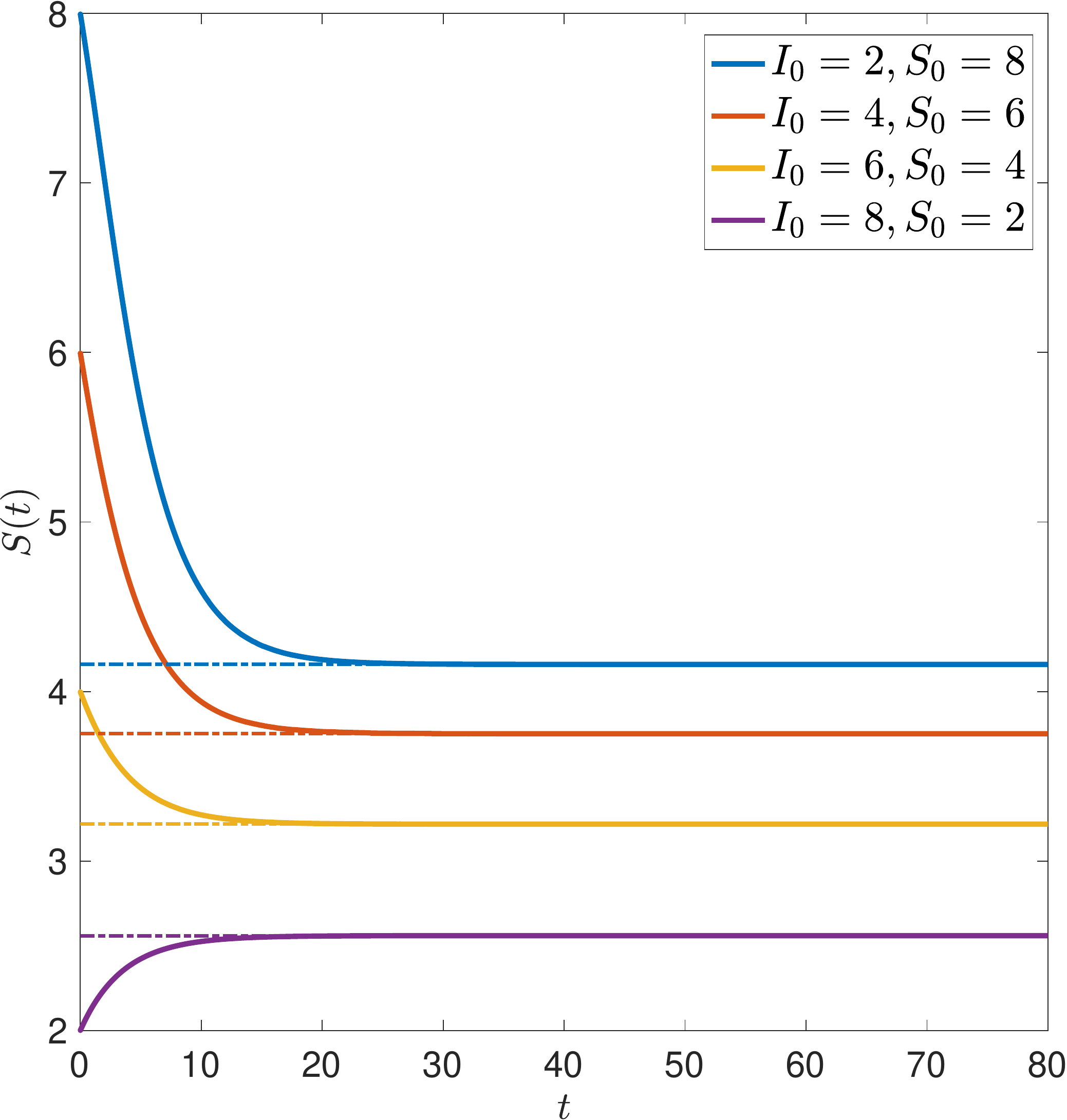}
}
\quad
\subfloat[][$I(t)$]{
\includegraphics[width=0.3\columnwidth]{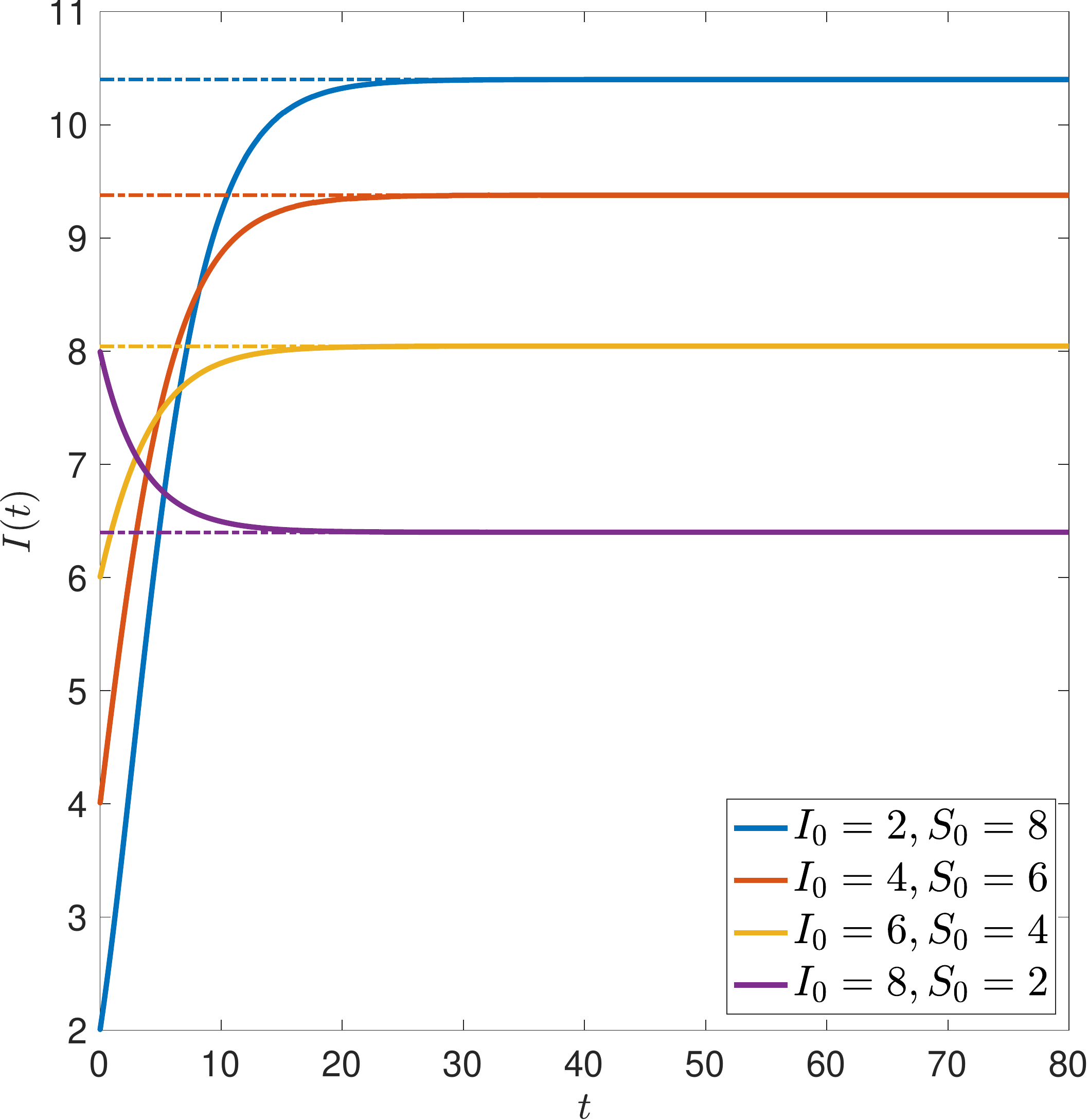}
}
\quad
\subfloat[][$S(t)+I(t)$]{
\includegraphics[width=0.3\columnwidth]{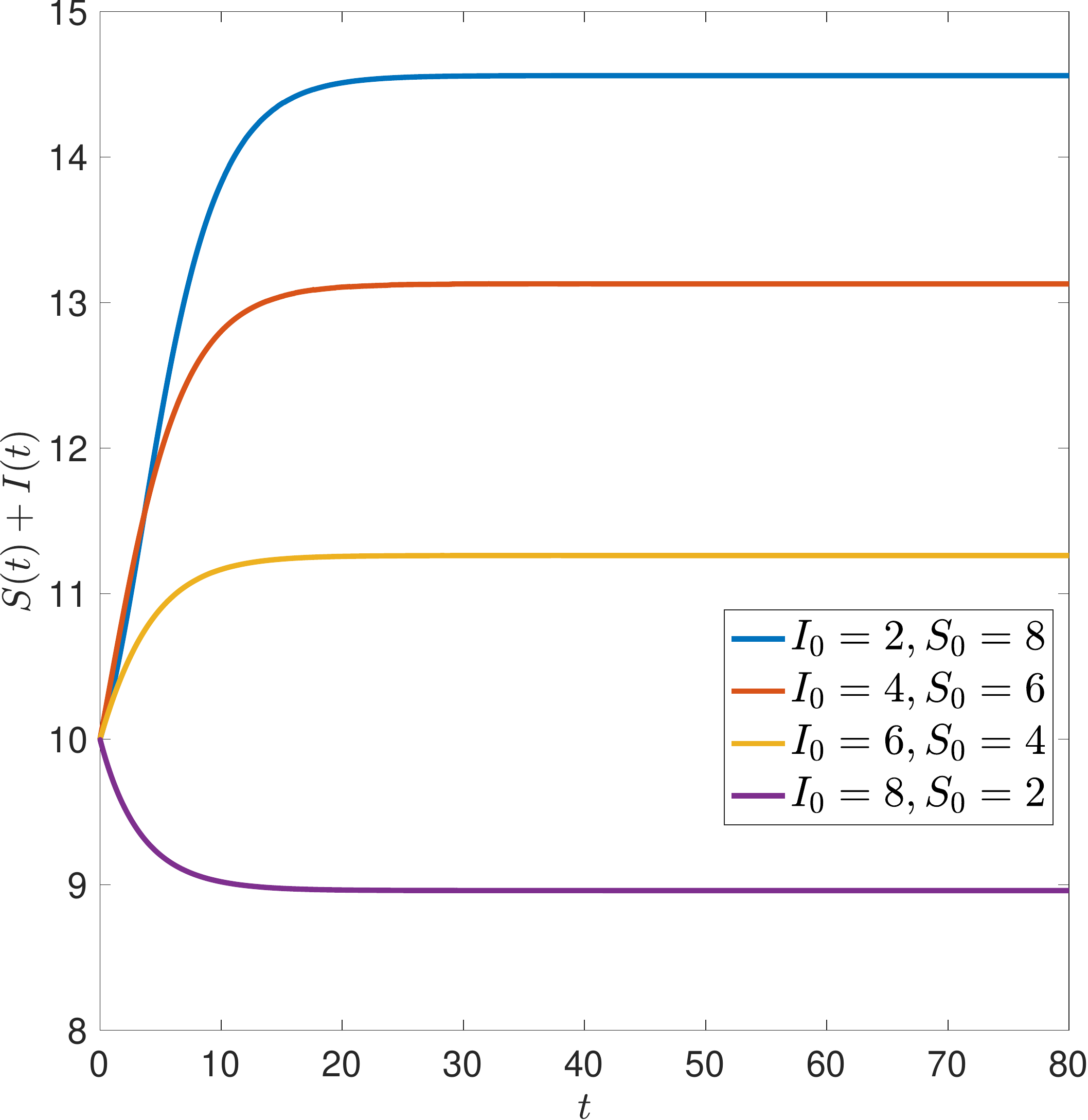}
}
\caption{Numerical solutions  to \eqref{SIS} for $\alpha=0.5$, $\beta=0.7$, $\gamma=0.2$ and $M(\alpha)\equiv1$ as $I_{0}$ and $S_{0}$ change.}
\label{fig:CF3}
\end{figure}
%\end{document}
%%
\begin{figure}[h!]
\subfloat[][$S(t)$]{
\includegraphics[width=0.3\columnwidth]{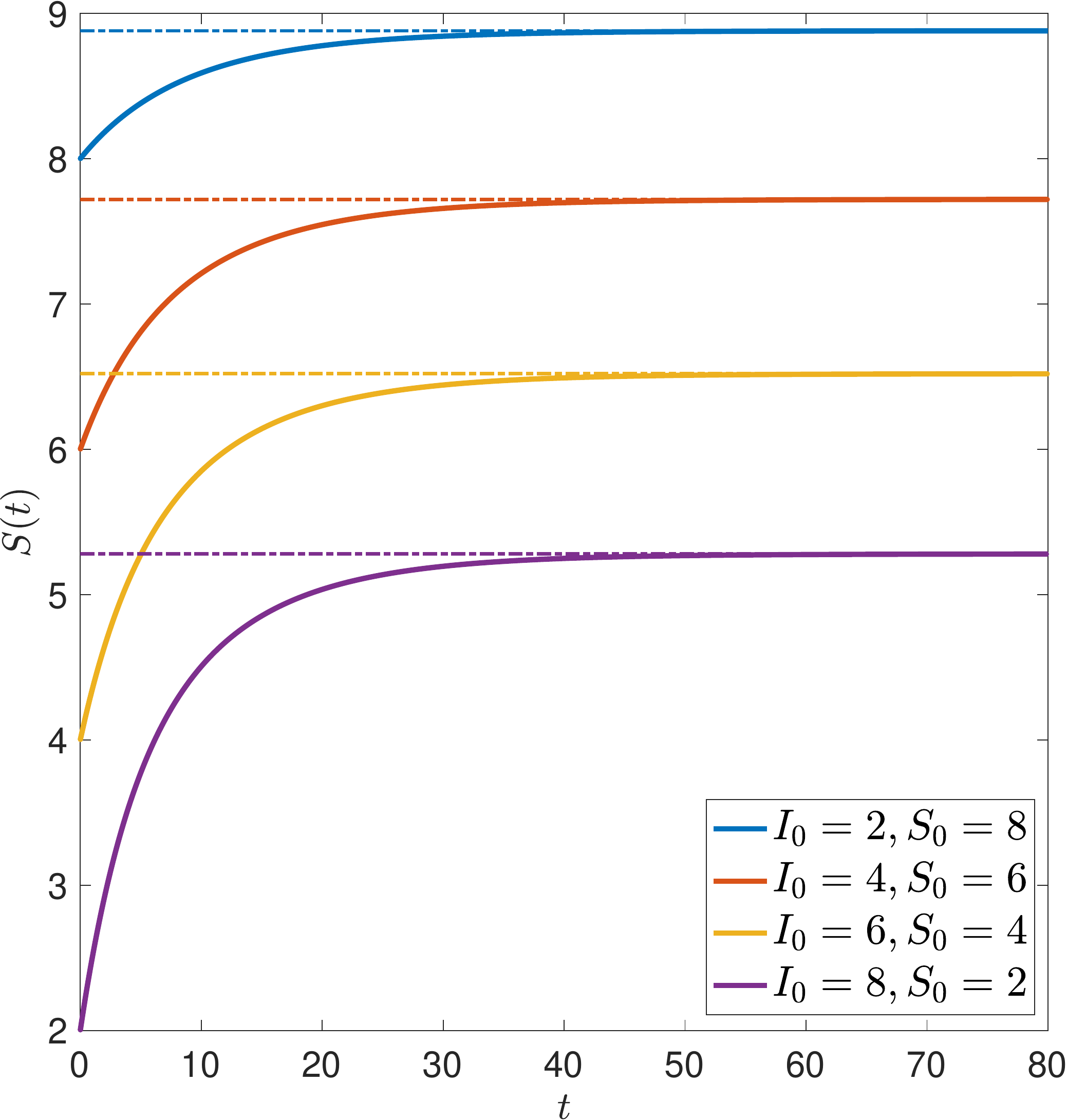}
}
\quad
\subfloat[][$I(t)$]{
\includegraphics[width=0.3\columnwidth]{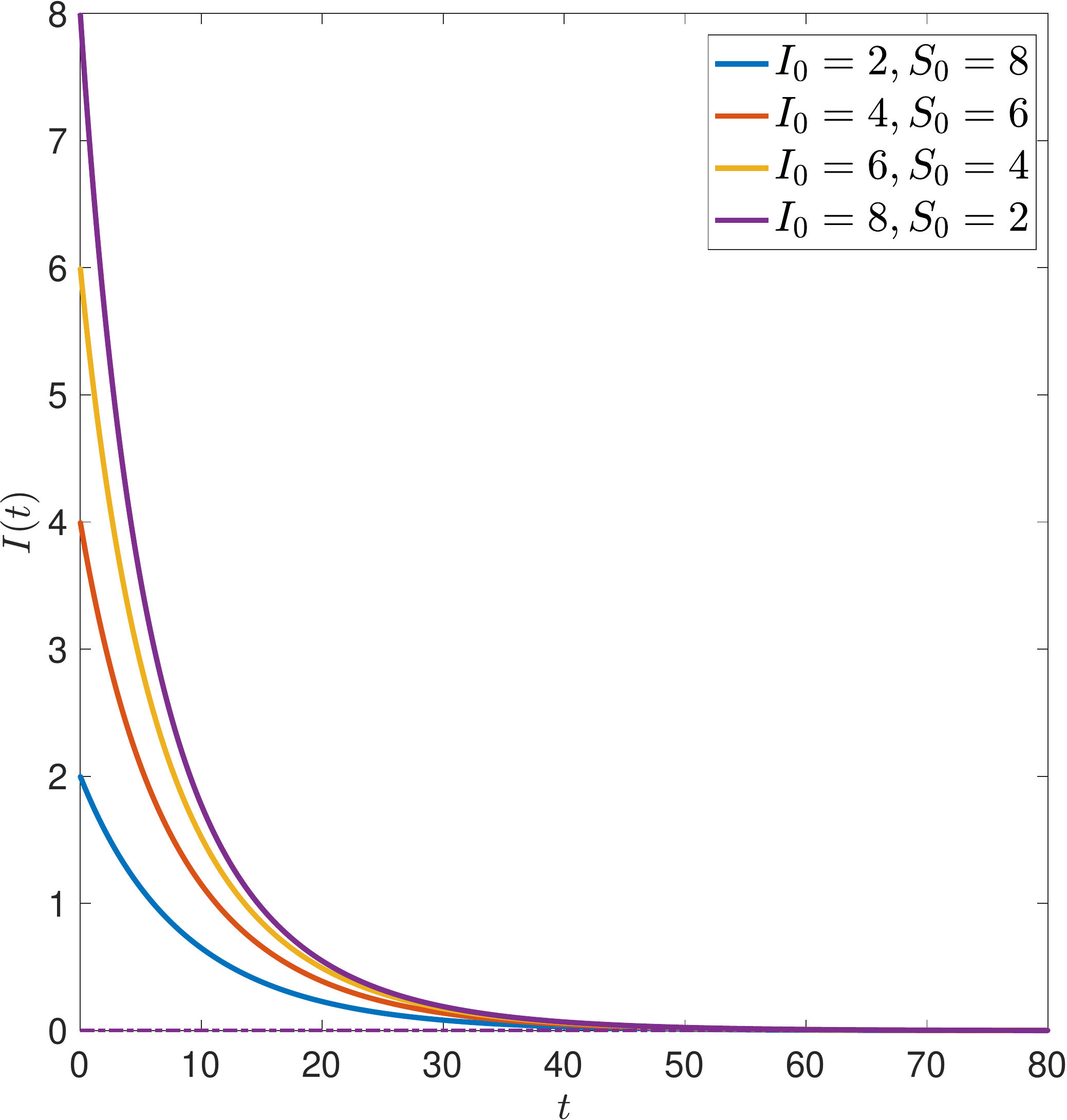}
}
\quad
\subfloat[][$S(t)+I(t)$]{
\includegraphics[width=0.3\columnwidth]{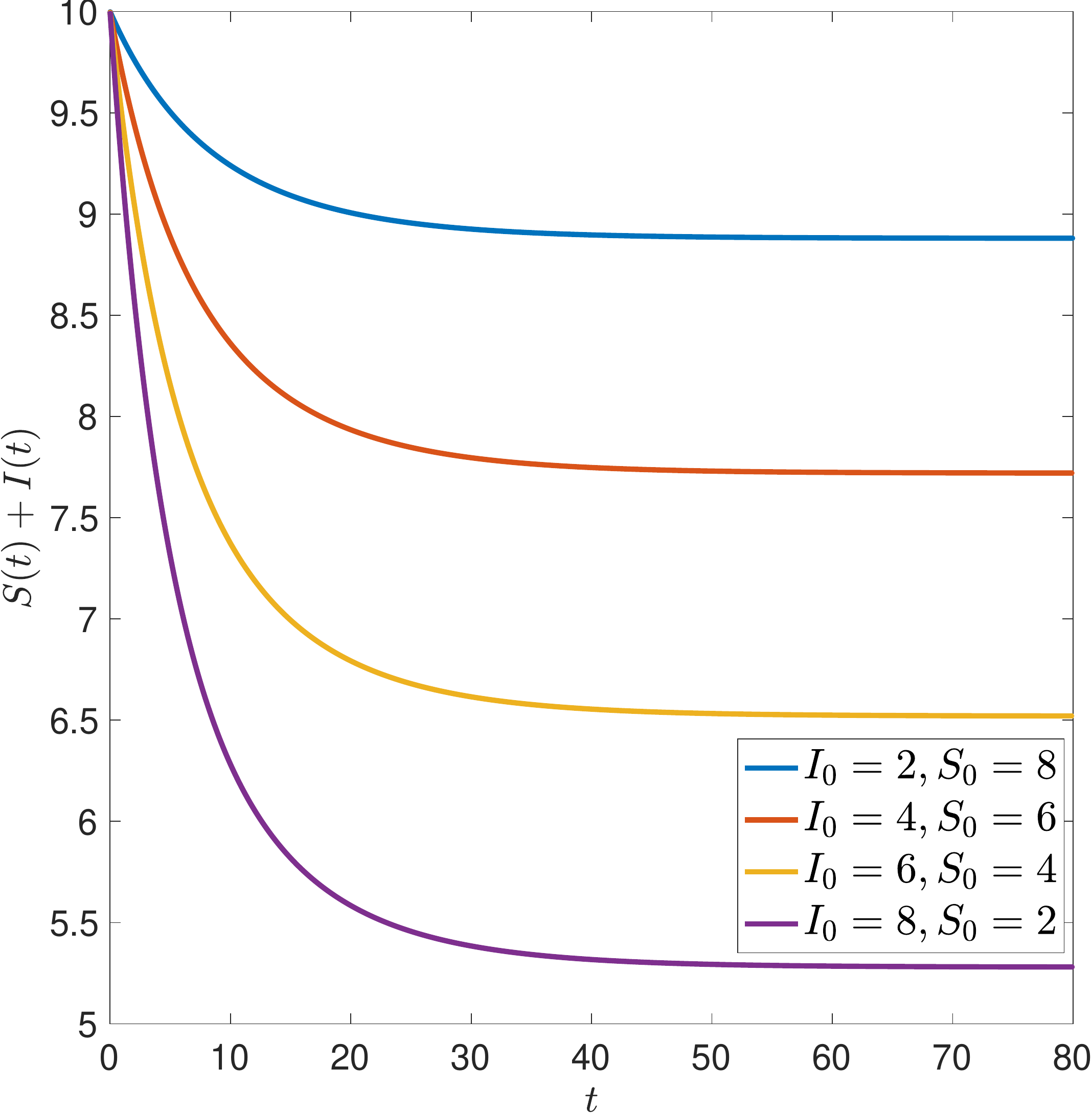}
}
\caption{Numerical solutions to \eqref{SIS} for $\alpha=0.5$, $\beta=0.1$, $\gamma=0.2$ and $M(\alpha)\equiv1$ as $I_{0}$ and $S_{0}$ change.}
\label{fig:CF4}
\end{figure}

In Figures \ref{fig:CF3} and \ref{fig:CF4} we fix $\alpha=0.5$ and we set $S_0=10-I_0$ and let vary the initial data $I_{0}$ in $\{2,4,6,8\}$. We note that the qualitative behavior of the solutions is in agreement with the theory developed here and it is not much affected by initial data.

}

\subsubsection{Comparison between Caputo SIS model and Caputo-Fabrizio SIS model}

\begin{figure}[h!]
\subfloat[][$S(t),\,\alpha=\alpha_{1}=0.2$]{
\includegraphics[width=0.3\columnwidth]{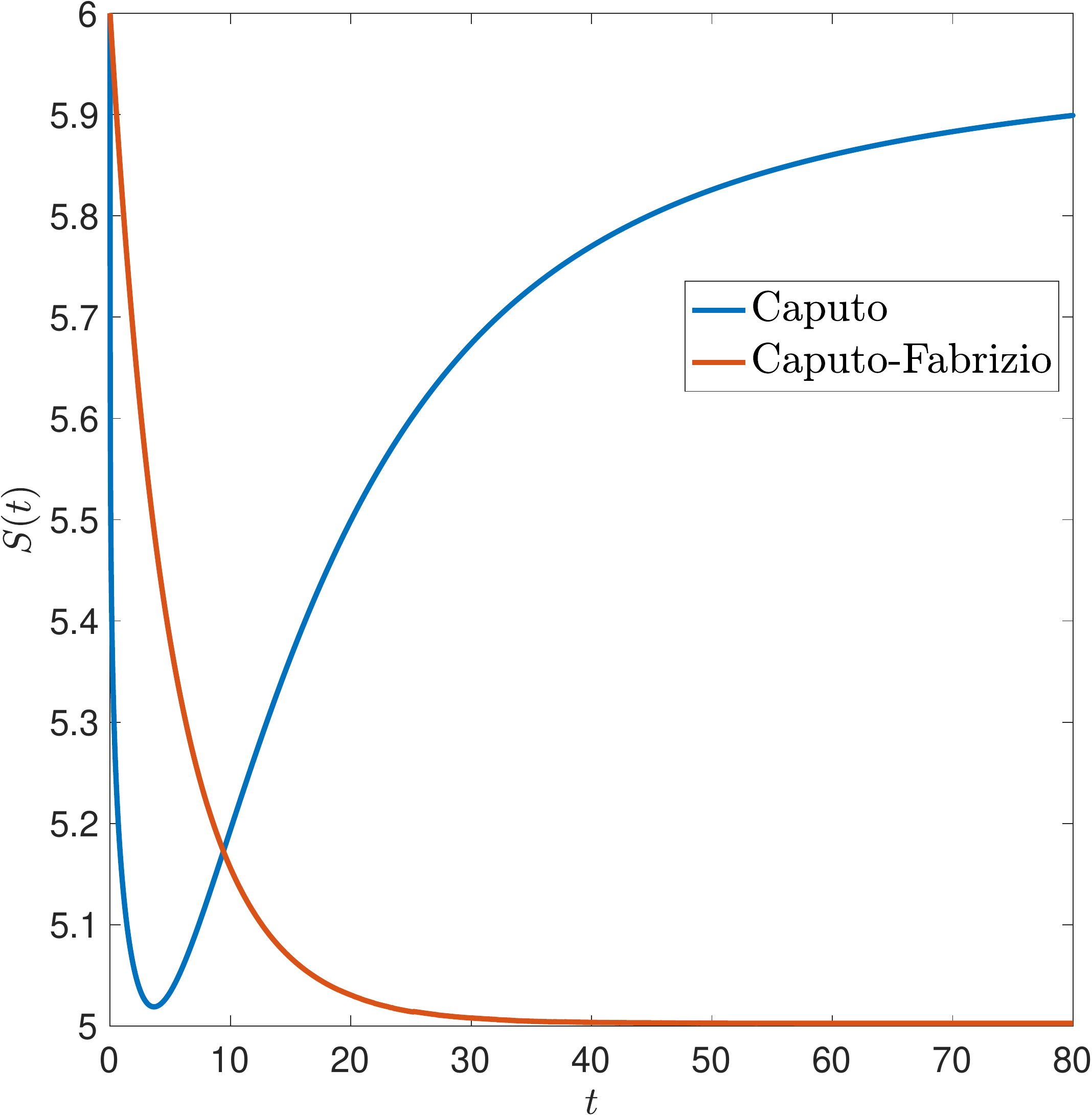}
}
\quad
\subfloat[][$I(t),\, \alpha_{2}=1$]{
\includegraphics[width=0.3\columnwidth]{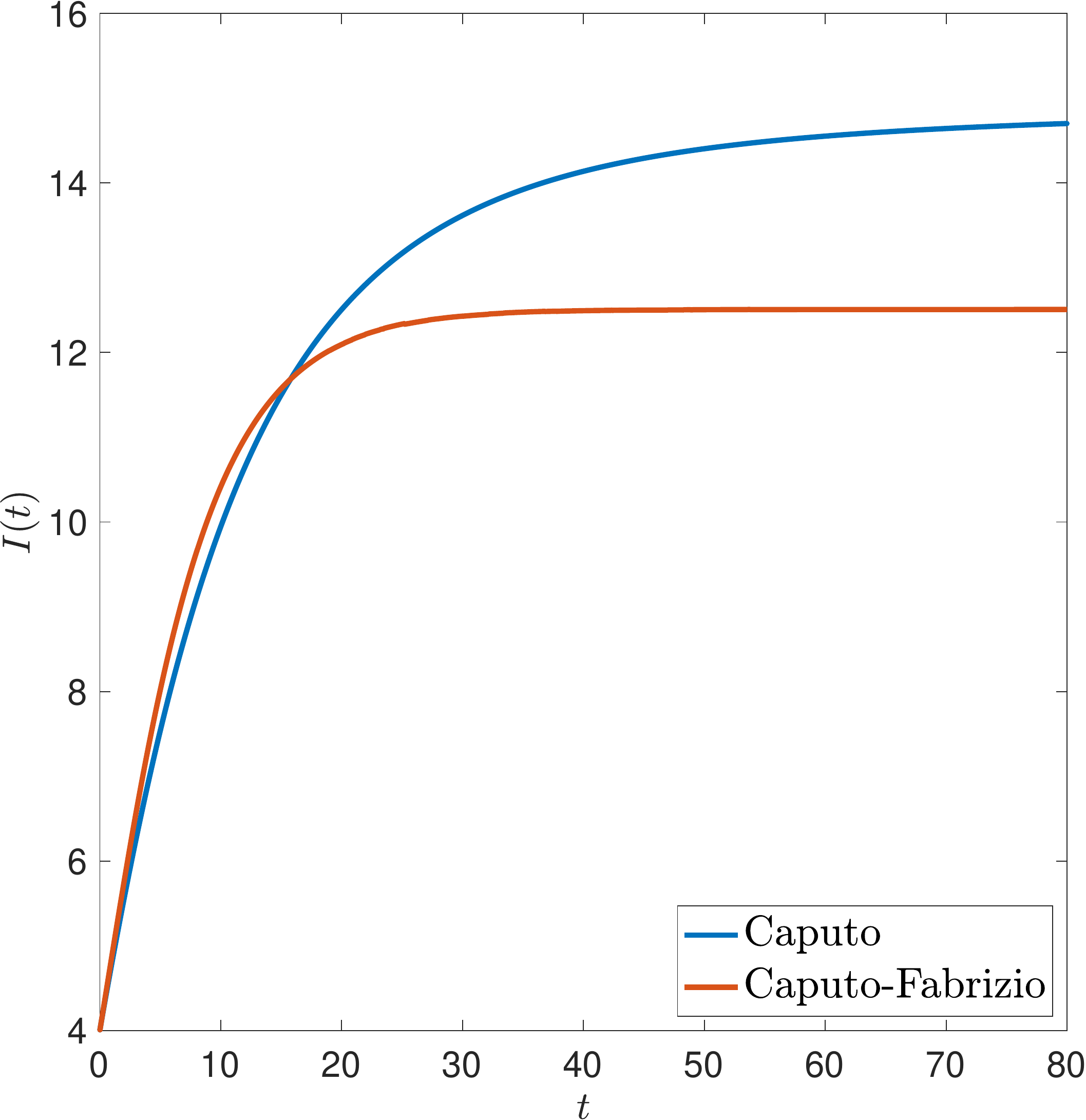}
}
\quad
\subfloat[][$S(t)+I(t)$]{
\includegraphics[width=0.3\columnwidth]{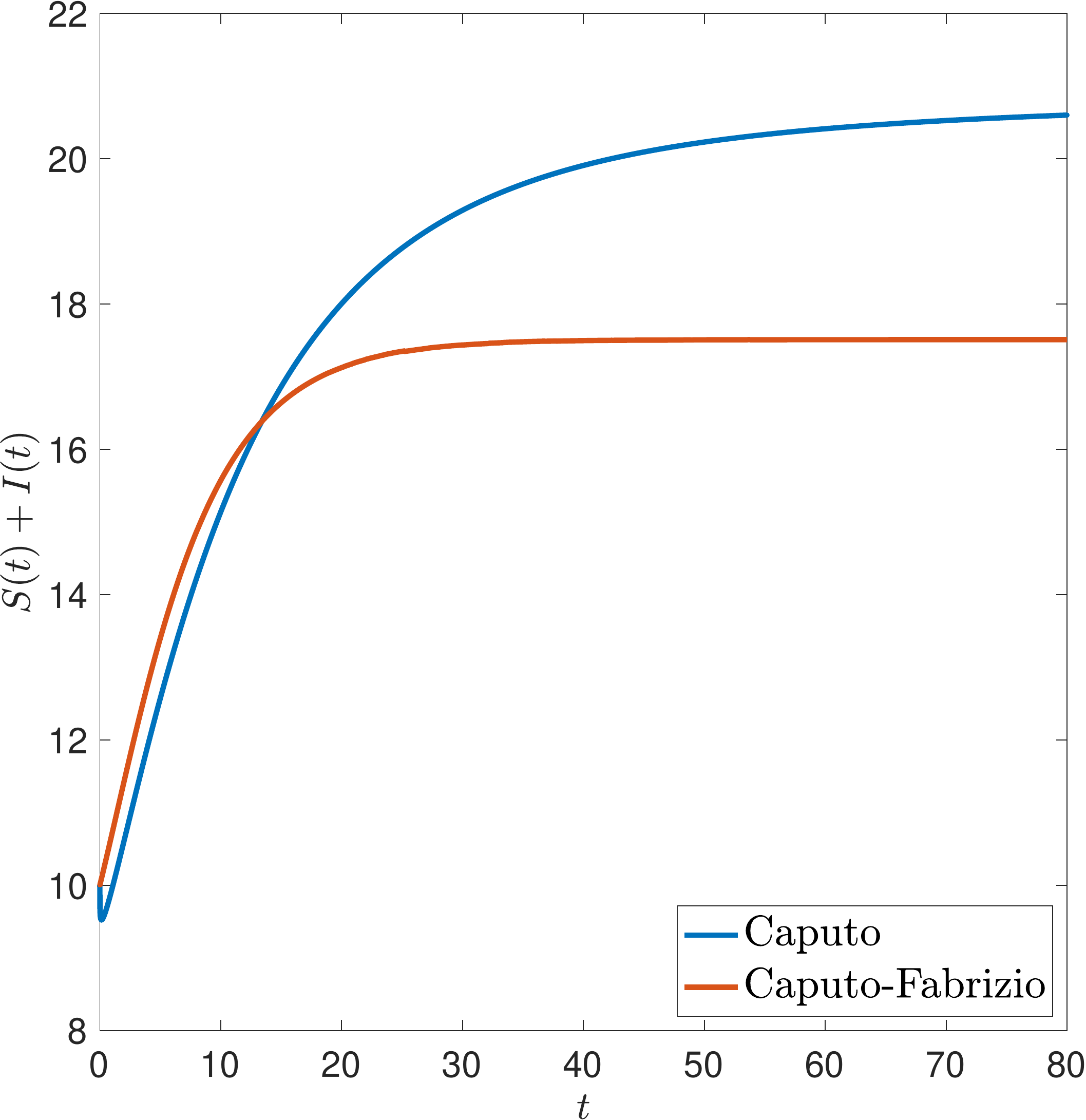}
}\\
\subfloat[][$S(t),\, \alpha=\alpha_{1}=0.5$]{
\includegraphics[width=0.3\columnwidth]{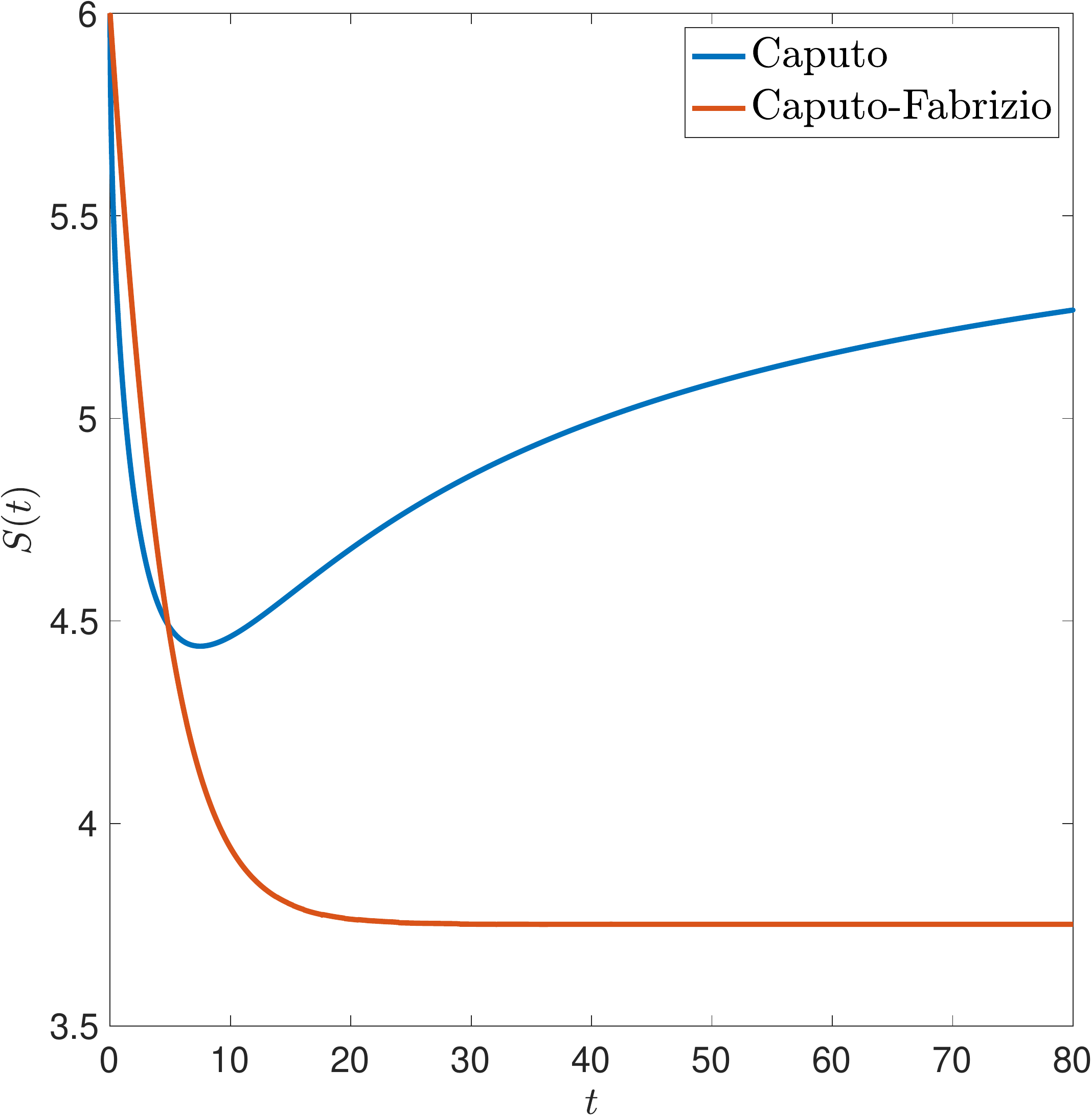}
}
\quad
\subfloat[][$I(t),\, \alpha_{2}=1$]{
\includegraphics[width=0.3\columnwidth]{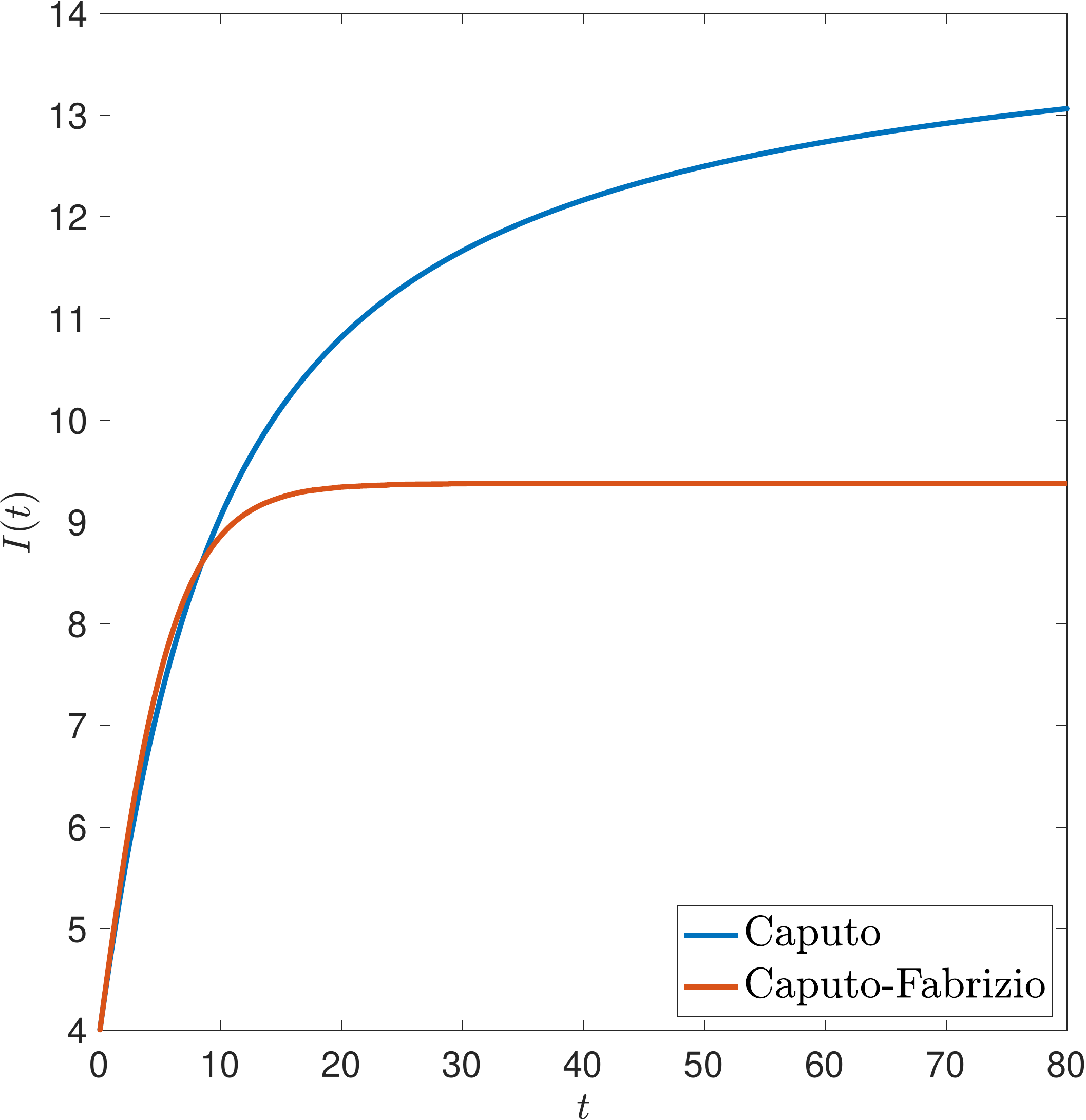}
}
\quad
\subfloat[][$S(t)+I(t)$]{
\includegraphics[width=0.3\columnwidth]{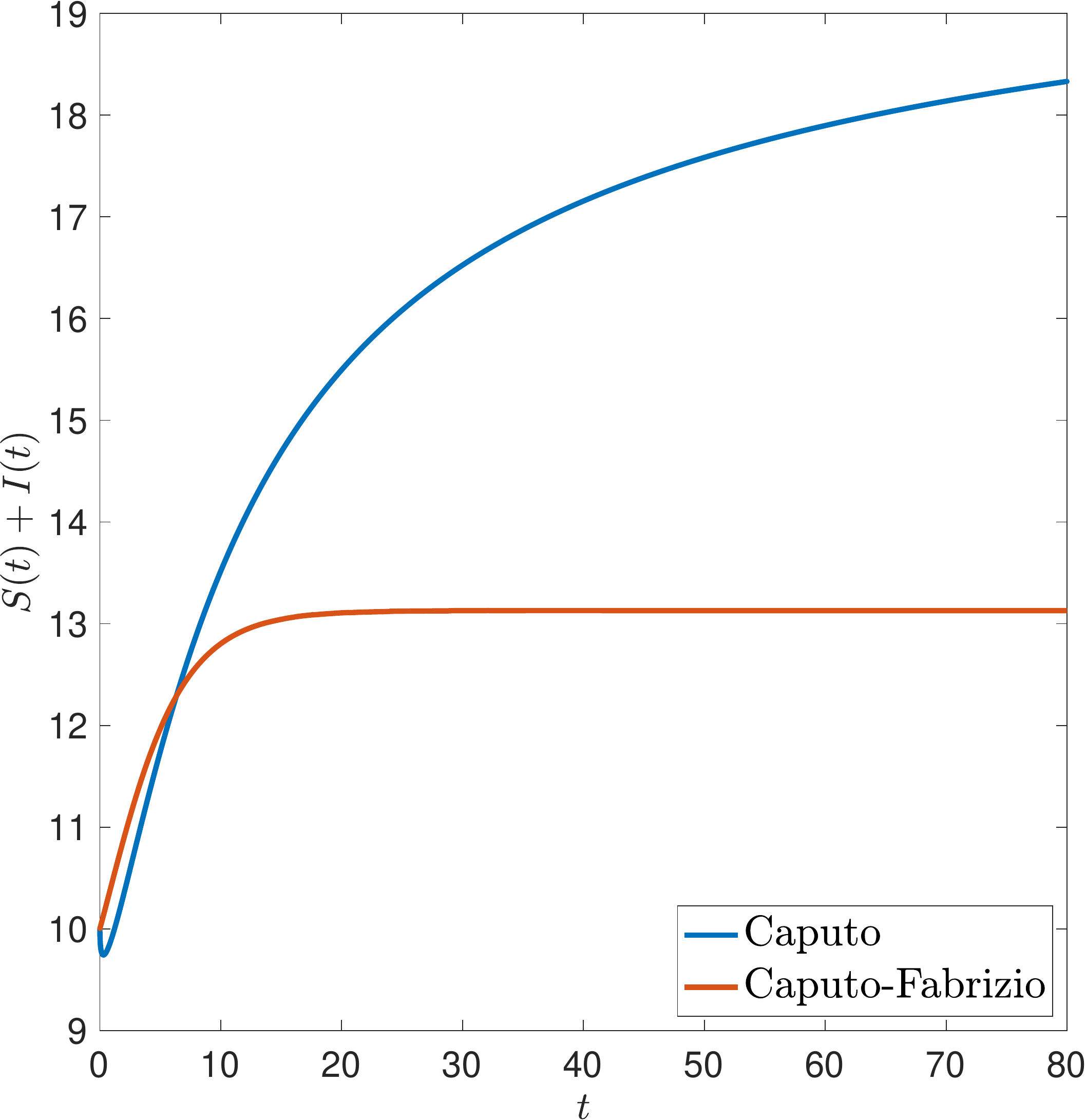}
}\\
\subfloat[][$S(t), \alpha=\alpha_{1}=0.8$]{
\includegraphics[width=0.3\columnwidth]{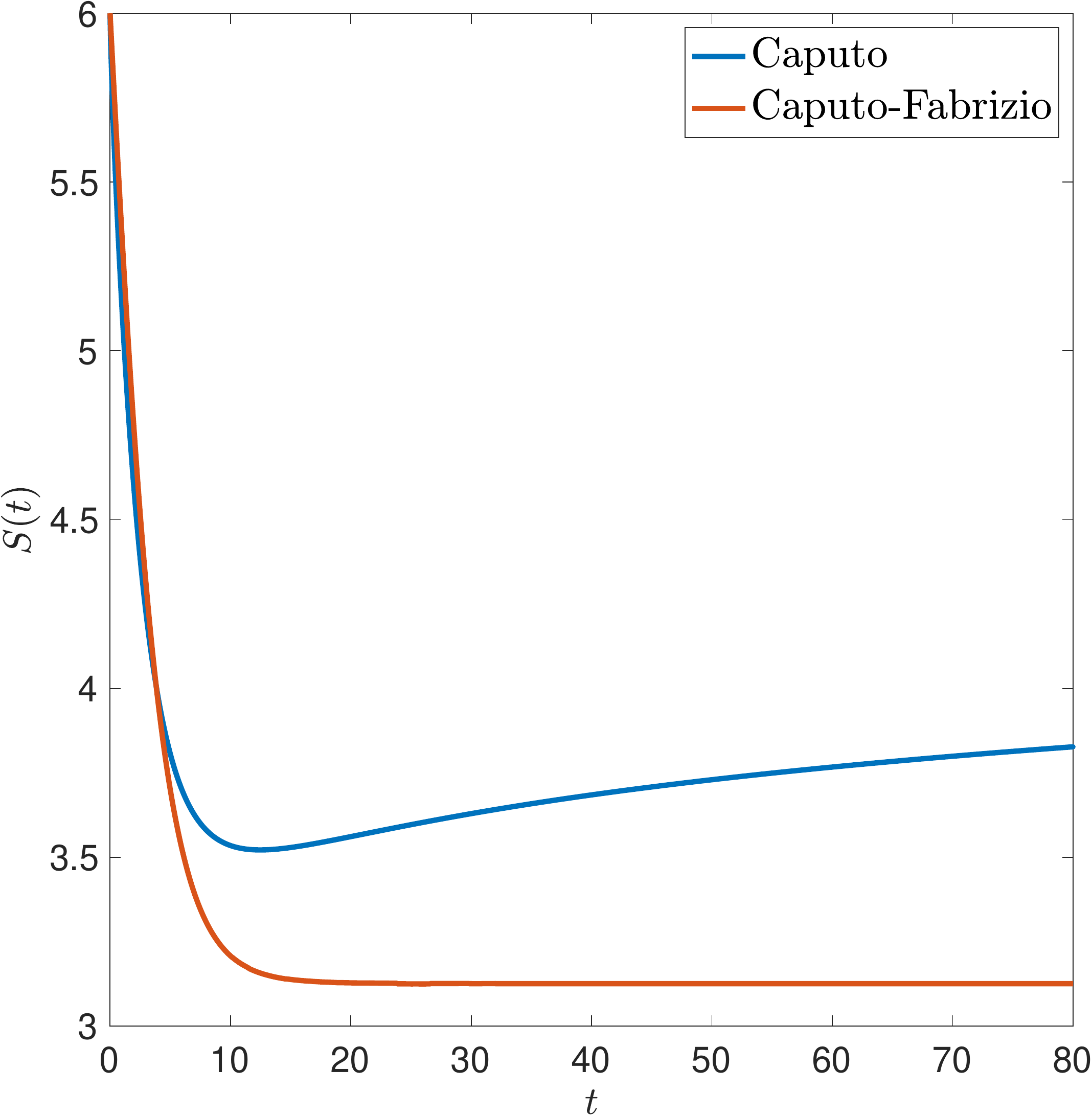}
}
\quad
\subfloat[][$I(t),\, \alpha_{2}=1$]{
\includegraphics[width=0.3\columnwidth]{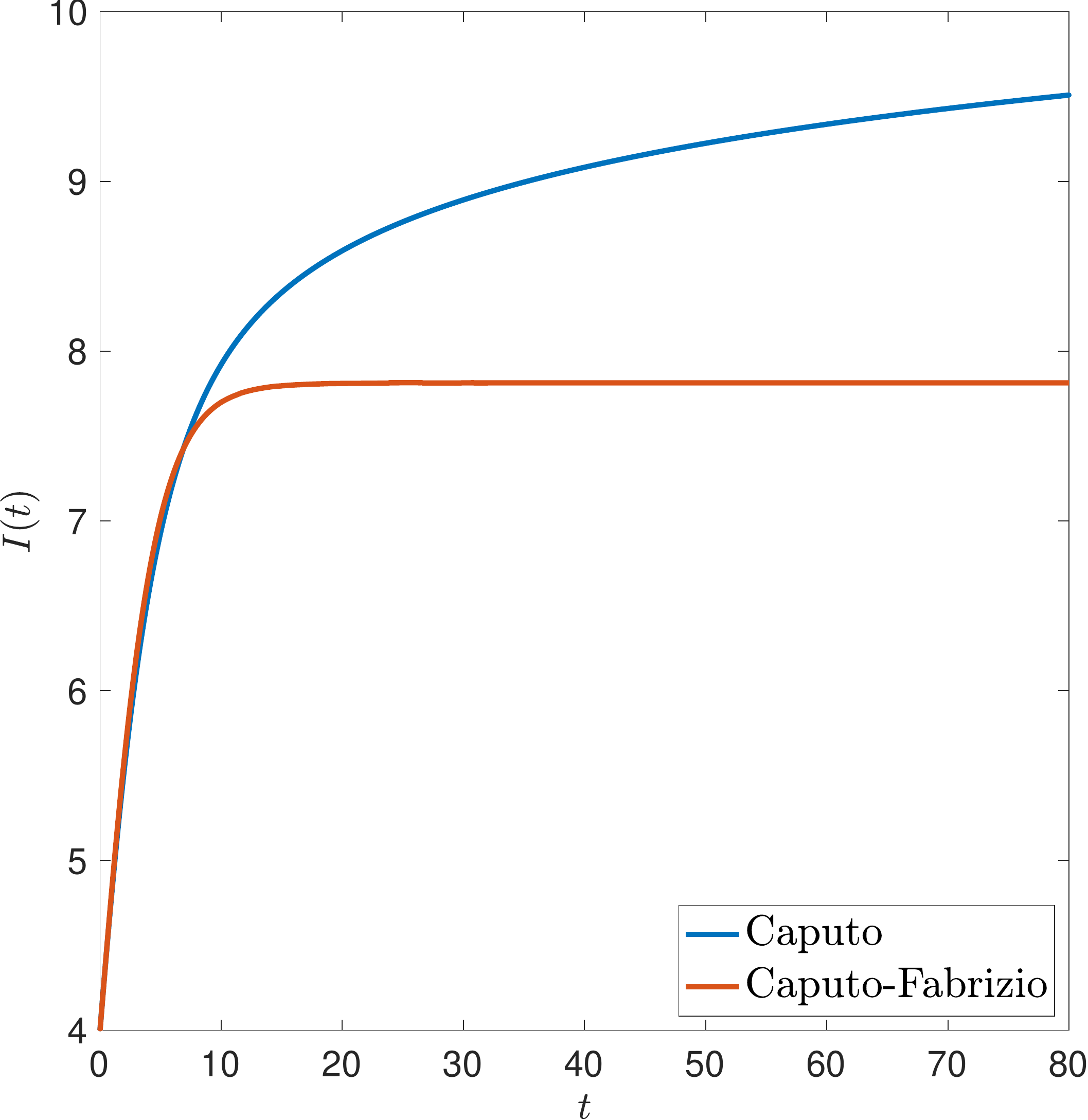}
}
\quad
\subfloat[][$S(t)+I(t)$]{
\includegraphics[width=0.3\columnwidth]{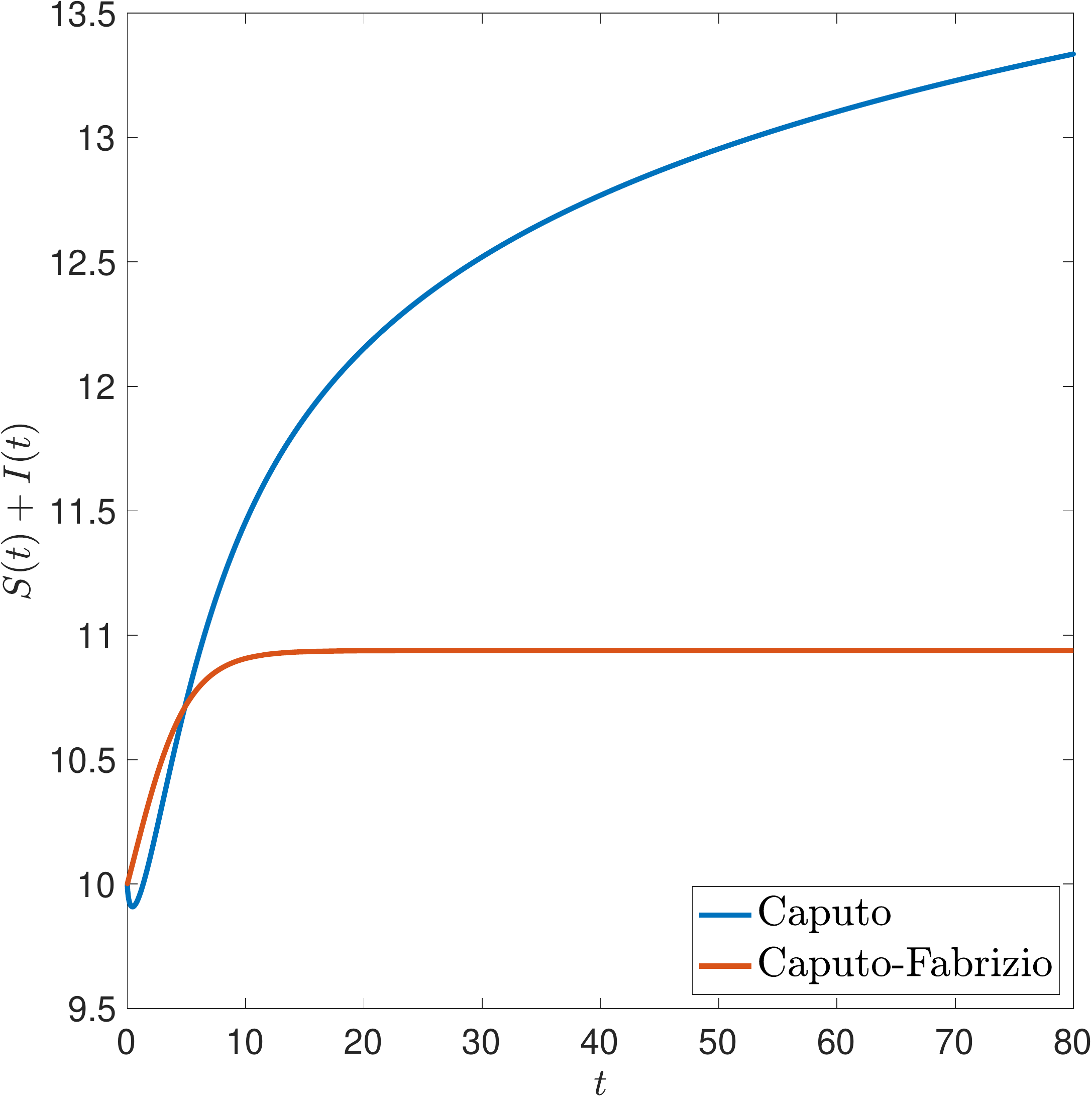}
}
\caption{Numerical solutions to \eqref{modello3} and \eqref{SIS} for $\alpha=\alpha_{1}=0.2$ (top), $\alpha=\alpha_{1}=0.5$ (center) and $\alpha=\alpha_{1}=0.8$ (bottom) with $\alpha_{2}=1$, $\beta=0.7$, $\gamma=0.2$,  $S_{0}=6$, $I_{0}=4$ and $M(\alpha)\equiv1$.}
\label{fig:CFCC1}
\end{figure}

\begin{figure}[h!]
\subfloat[][$S(t), \,\alpha=\alpha_{1}=0.2$]{
\includegraphics[width=0.3\columnwidth]{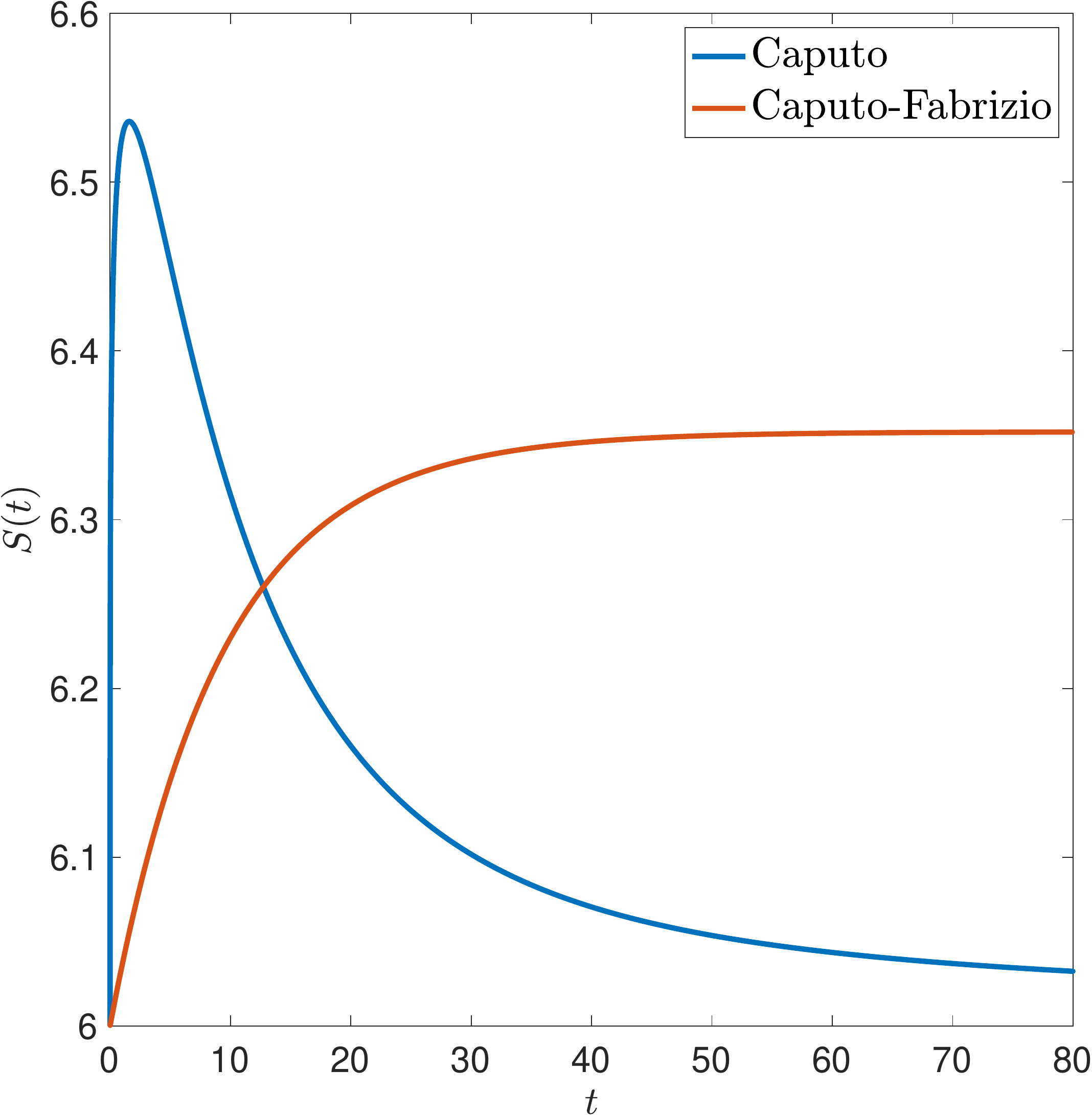}
}
\quad
\subfloat[][$I(t),\, \alpha_{2}=1$]{
\includegraphics[width=0.3\columnwidth]{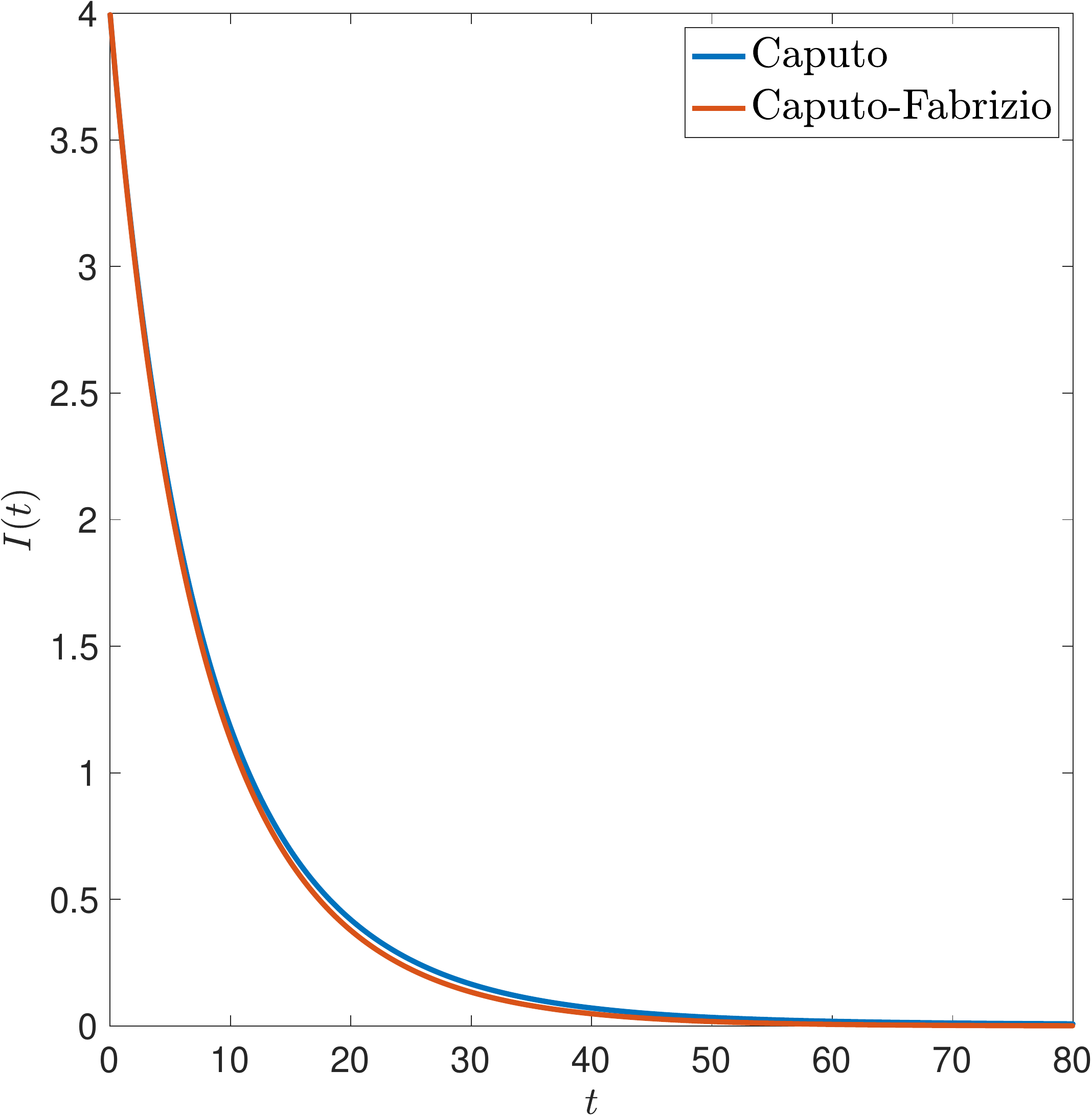}
}
\quad
\subfloat[][$S(t)+I(t)$]{
\includegraphics[width=0.3\columnwidth]{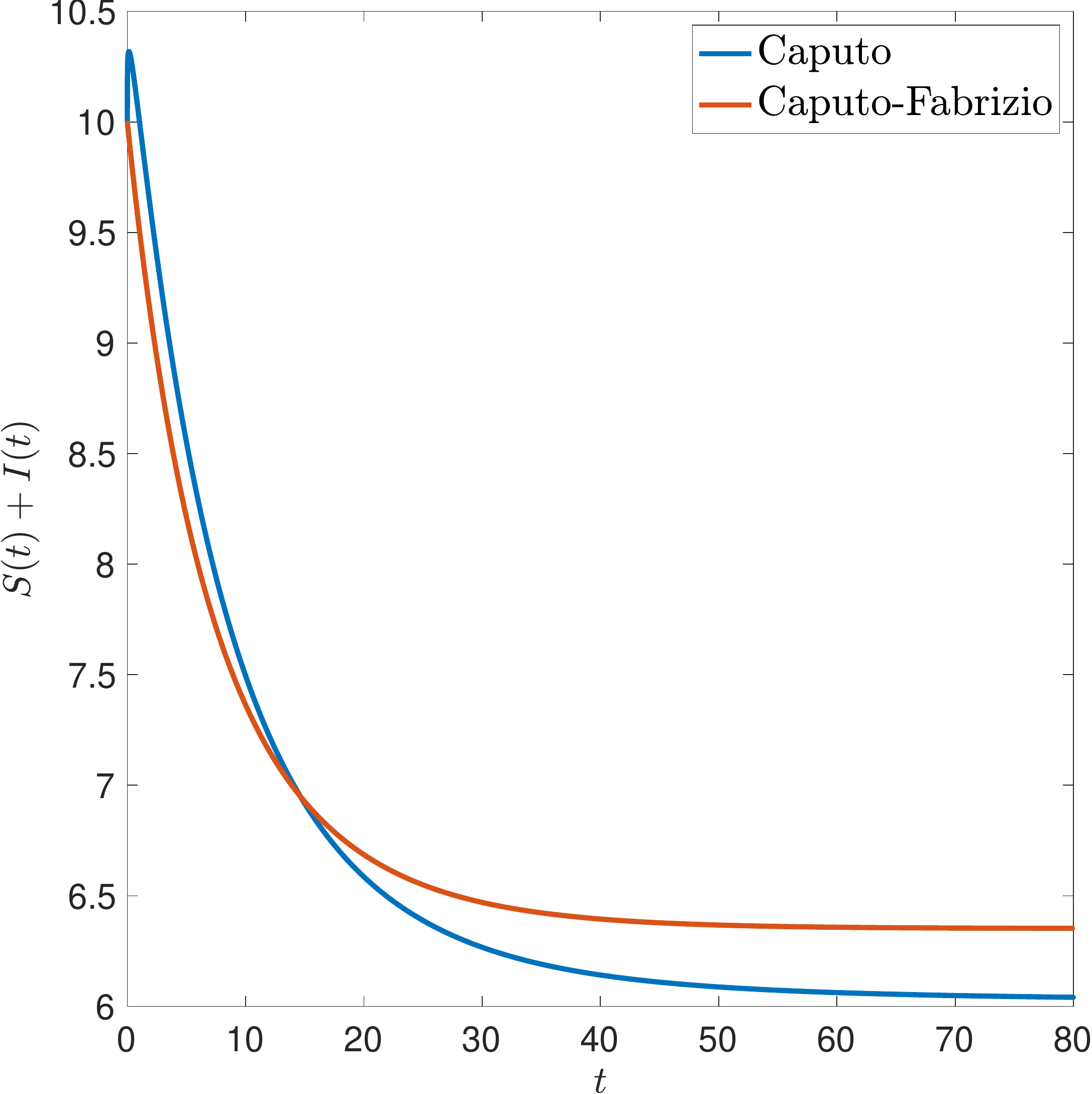}
}\\
\subfloat[][$S(t), \,\alpha=\alpha_{1}=0.5$]{
\includegraphics[width=0.3\columnwidth]{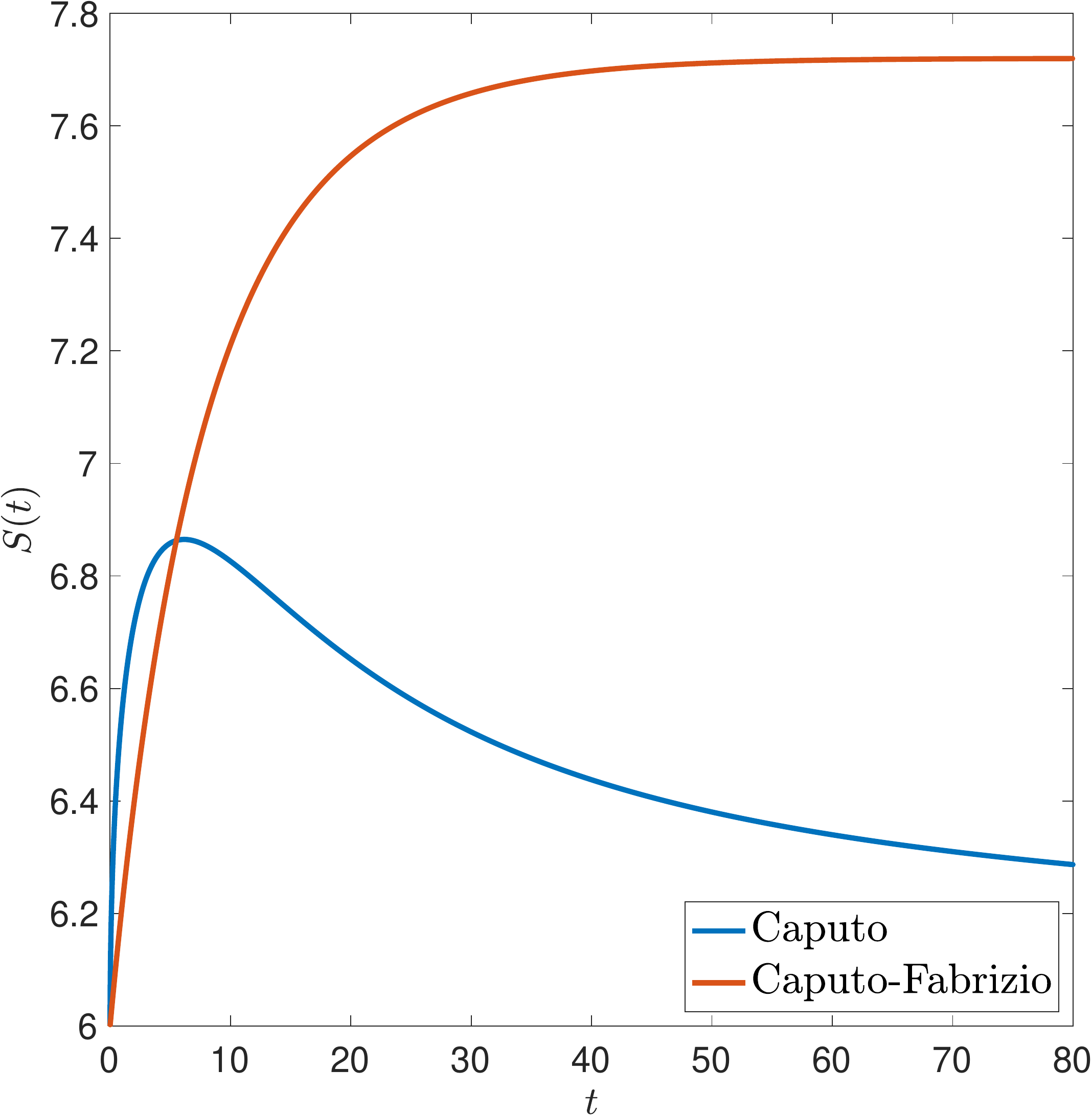}
}
\quad
\subfloat[][$I(t),\, \alpha_{2}=1$]{
\includegraphics[width=0.3\columnwidth]{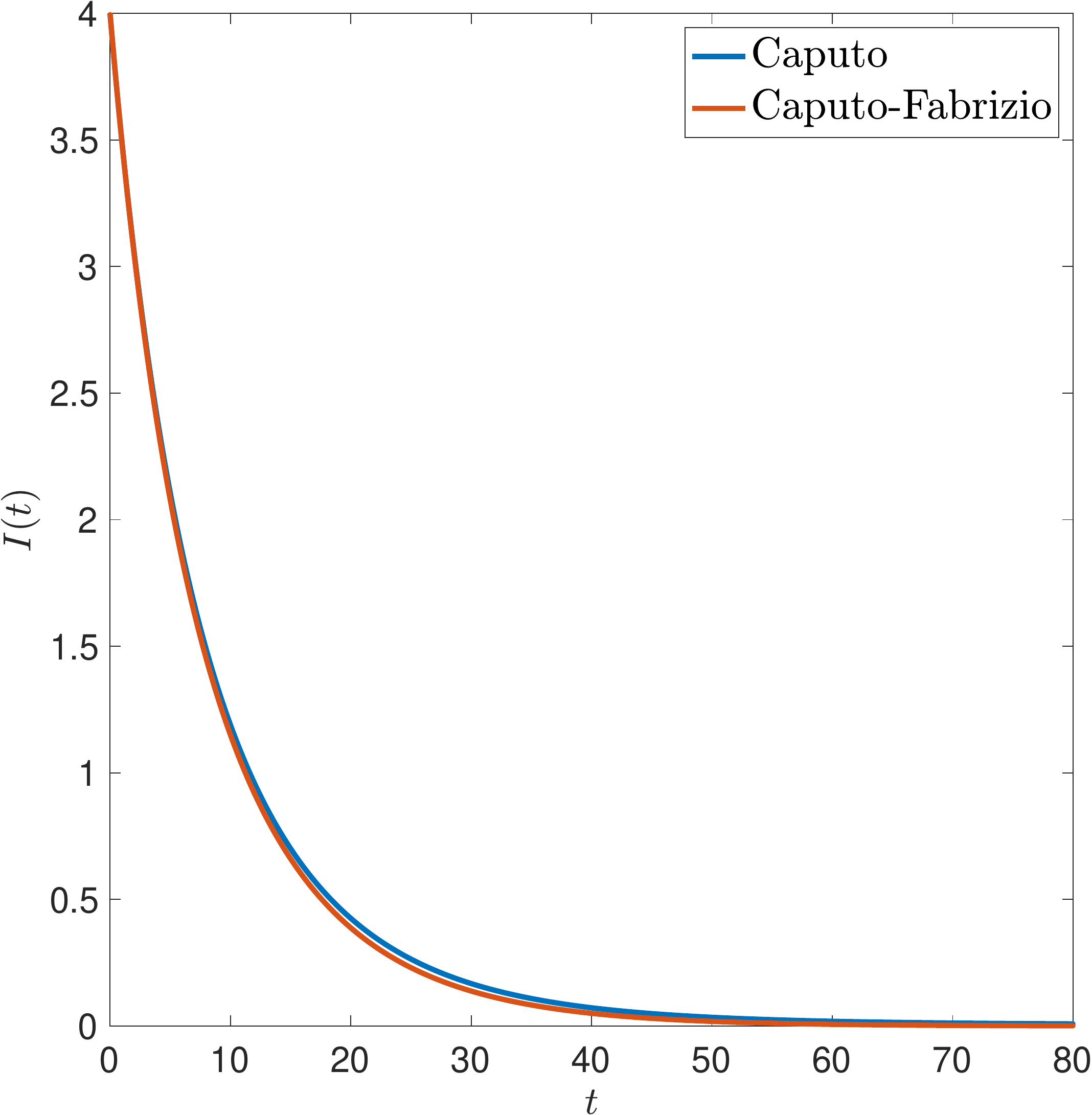}
}
\quad
\subfloat[][$S(t)+I(t)$]{
\includegraphics[width=0.3\columnwidth]{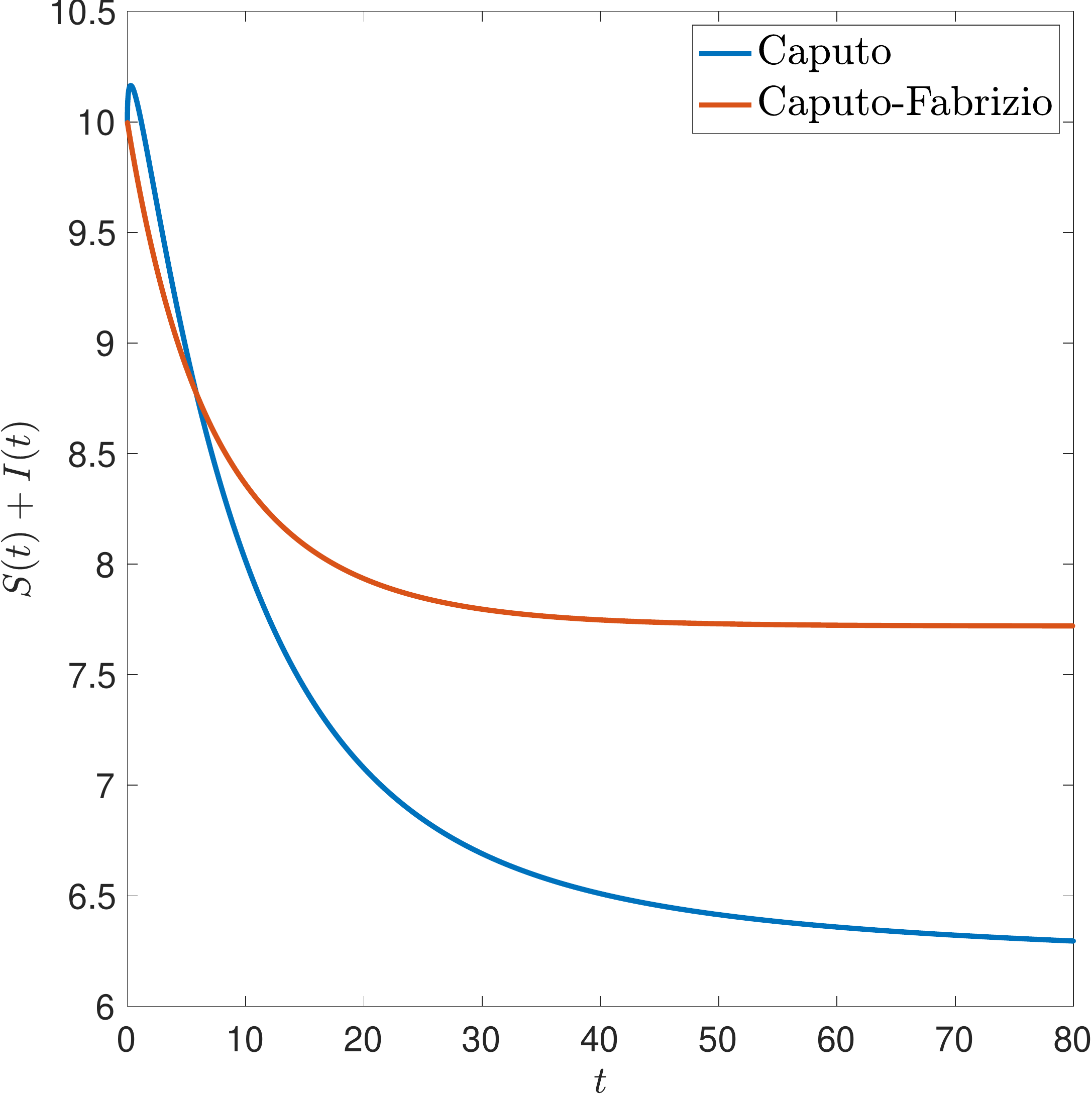}
}\\
\subfloat[][$S(t),\,\alpha=\alpha_{1}=0.8$]{
\includegraphics[width=0.3\columnwidth]{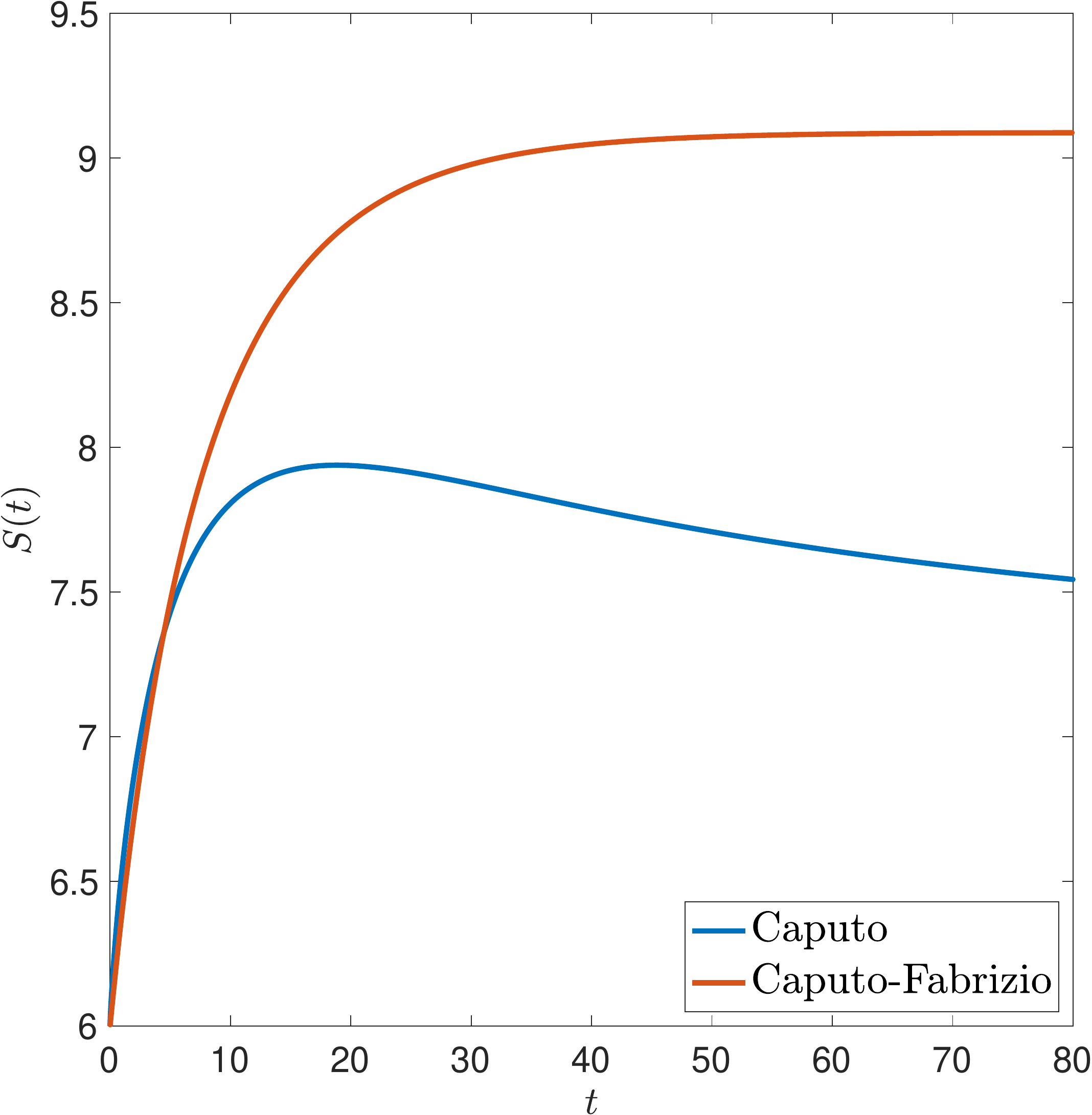}
}
\quad
\subfloat[][$I(t),\, \alpha_{2}=1$]{
\includegraphics[width=0.3\columnwidth]{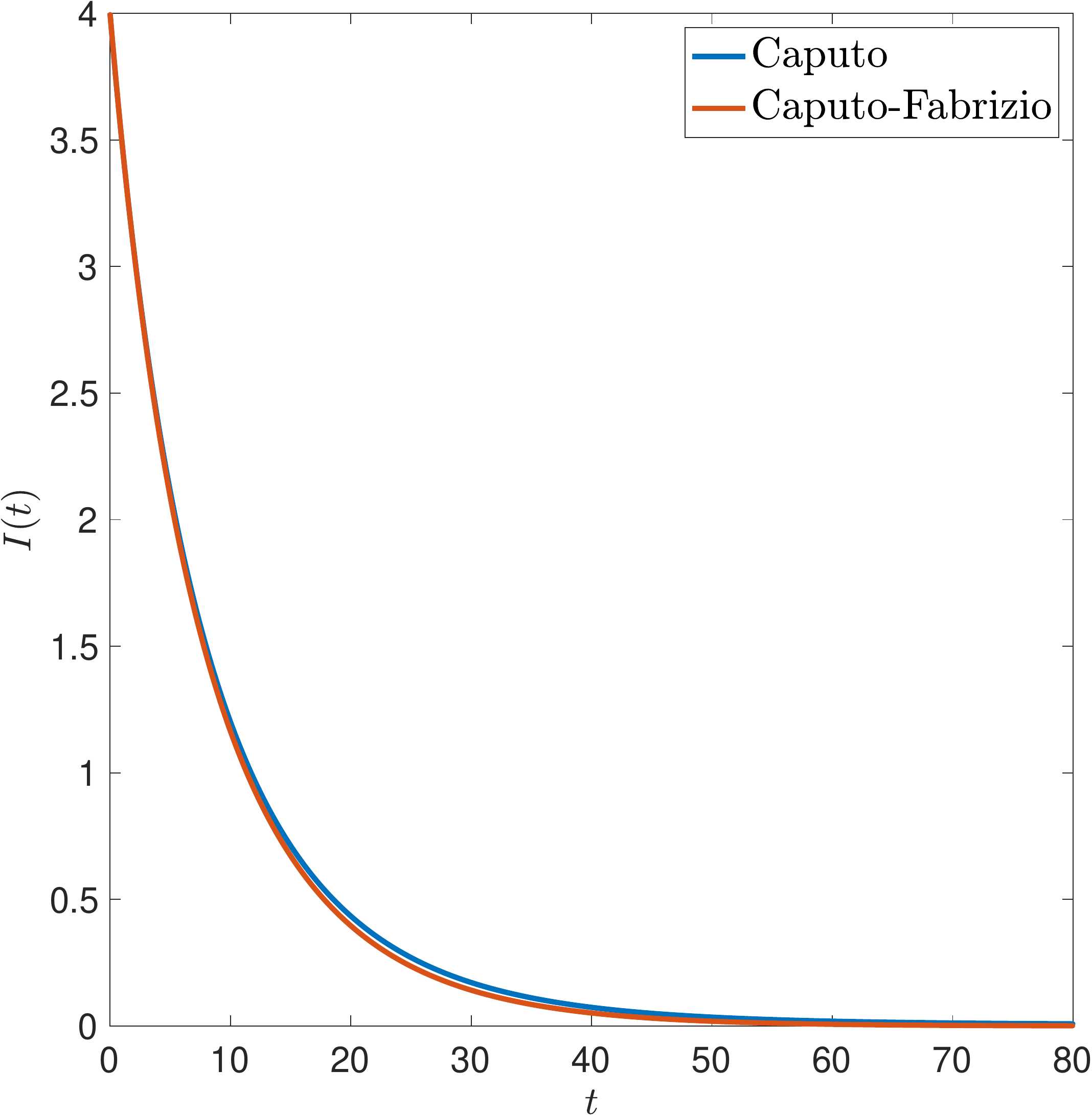}
}
\quad
\subfloat[][$S(t)+I(t)$]{
\includegraphics[width=0.3\columnwidth]{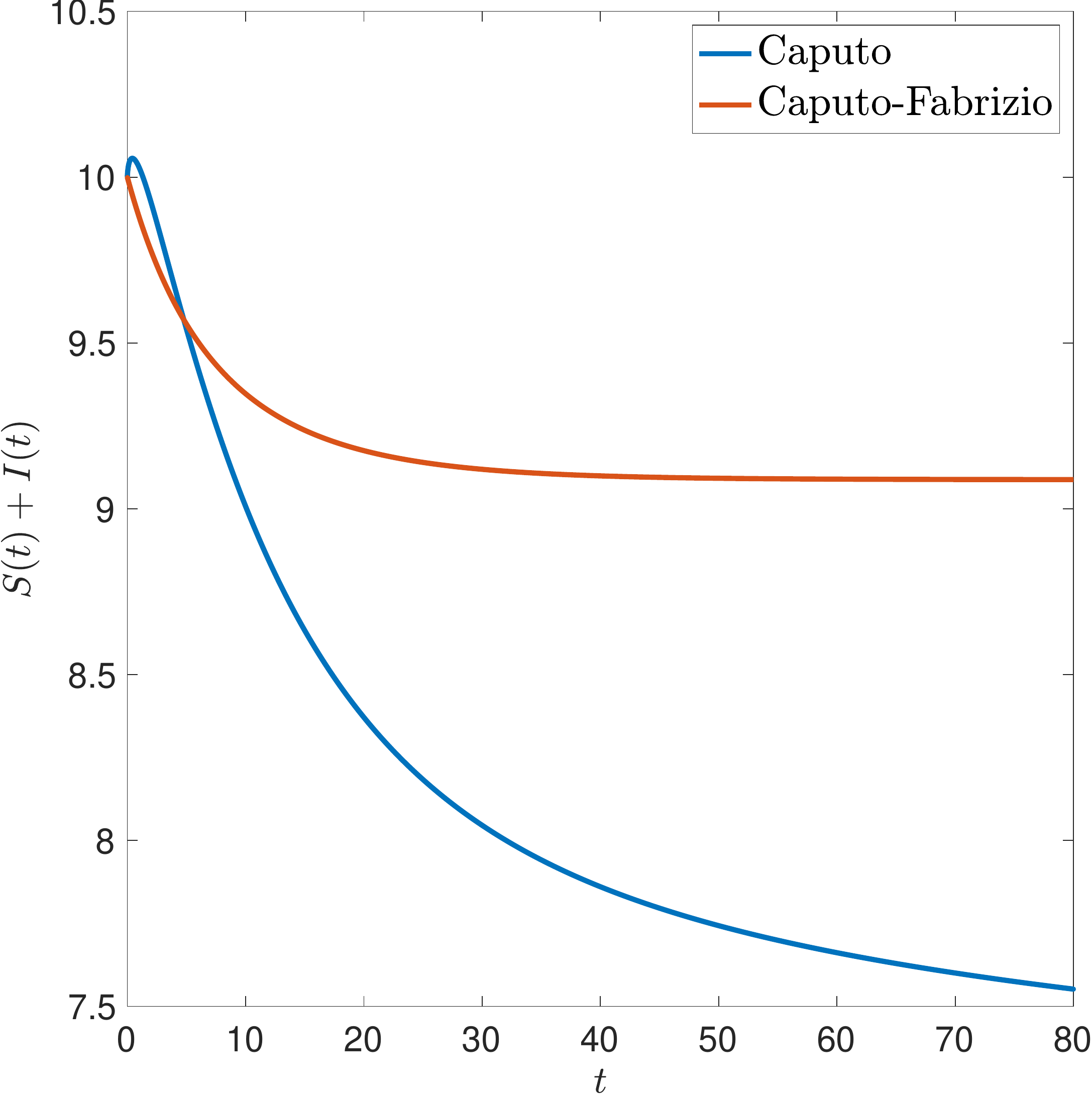}
}
\caption{Numerical solutions   to \eqref{modello3} and \eqref{SIS} for $\alpha_{1}=0.2$ (top), $\alpha_{1}=0.5$ (center) and $\alpha_{1}=0.8$ (bottom) with $\alpha_{2}=1$, $\beta=0.1$, $\gamma=0.2$, $S_{0}=6$, $I_{0}=4$ and $M(\alpha)\equiv1$.}
\label{fig:CFCC2}
\end{figure}

We conclude this section with some tests pointing out the effect of the choice of a particular fractional operator on SIS models. We directly compare the numerical solution to the system with Caputo fractional SIS model \eqref{modello3} and Caputo-Fabrizio fractional SIS model \eqref{SIS}. To this end we fix the intial data $S_0=6$ and $I_0=4$. We consider the set the fractional orders $\alpha_1$ and $\alpha_2$ in \eqref{modello3}, the fractional order $\alpha$ in \eqref{SIS} and we set $\alpha_1=\alpha$, $\alpha_2=1$, letting $\alpha$ vary in the set $\{0.2,0.5,0.8\}$. Figure \ref{fig:CFCC1} depicts the results in a case in which $\gamma<\beta$, in particular $\gamma=0.2$ and $\beta=0.7$, whereas in Figure \ref{fig:CFCC2} we test a case in which $\beta<\gamma$, in particular $\gamma=0.2$ and  $\beta=0.1$. Note that the evolution of  susceptible population $S(t)$ is mostly affected by the change of differential operator and this effect is augmented by choosing small fractional orders. Finally, note that the Caputo-Fabrizio operator preserves the monotonicity properties of the ordinary SIS case, while the Caputo operator used in \eqref{modello3} yields a more complex structure.

\vskip2cm
\section{Conclusions}\label{s4}
We explored the effects of fractional differential operators on  SIS epidemic models. The main novelty of the present paper consists in  letting the susceptible and the infected population evolve with different orders of fractional differential operators. We presented two fractional SIS models with mixed fractional orders. One of them involves  the Caputo fractional derivative, characterized by a singular, power law kernel. The other proposed model relies on  the recently introduced Caputo-Fabrizio differential operator, which has a non-singular, exponential kernel. For both models  we established existence results for the solutions and we conducted a qualitative analysis by means of numerical simulations. In the case of the Caputo-Fabrizio operator, under the assumption that the fractional behavior is restricted to the susceptible population whereas the infected population evolves according to an ordinary differential equation, we were able to move some further step forward in the analysis of the system. Indeed we characterized the equilibria, noticed their strong dependence on the reproduction number according to classical theory,  and proposed a method for inverse problems. Finally, we numerically, directly compared the proposed Caputo and Caputo-Fabrizio SIS models, in order to let emerge the effects of each particular differential operator on the system.  
 
 We believe that  the possibility of tuning the memory effects in a single compartment of the population (susceptible/infected) by means of ad hoc fractional orders, may provide finer and effective tools in data fitting and mathematical modeling of epidemic dynamics.
 
  Further possible extensions of the present work include the extension of the proposed methods to other epidemic models, for instance the SIR model, and to controlled, mixed fractional epidemic dynamics.

\bibliographystyle{alpha}
\bibliography{cfmisto}

\newcommand{\etalchar}[1]{$^{#1}$}
\begin{thebibliography}{LWLT19}

\bibitem[BDL20]{balzotti2020FF}
Caterina Balzotti, Mirko D’Ovidio, and Paola Loreti.
\newblock Fractional sis epidemic models.
\newblock {\em Fractal Fract.}, 4(3):44, 2020.

\bibitem[Cap08]{Caputo}
Michele Caputo.
\newblock Linear models of dissipation whose {$Q$} is almost frequency
  independent. {II}.
\newblock {\em Fract. Calc. Appl. Anal.}, 11(1):4--14, 2008.
\newblock Reprinted from Geophys. J. R. Astr. Soc. {{\bf{1}}3} (1967), no. 5,
  529--539.

\bibitem[CF15]{CapFab}
Michele Caputo and Mauro Fabrizio.
\newblock A new definition of fractional derivative without singular kernel.
\newblock {\em Progr. Fract. Differ. Appl}, 1(2):1--13, 2015.

\bibitem[CL55]{ODE}
Earl~A Coddington and Norman Levinson.
\newblock {\em Theory of ordinary differential equations}.
\newblock Tata McGraw-Hill Education, 1955.

\bibitem[CLYL21]{chen2021AMM}
Yuli Chen, Fawang Liu, Qiang Yu, and Tianzeng Li.
\newblock Review of fractional epidemic models.
\newblock {\em Appl. Math. Model.}, 97:281--307, 2021.

\bibitem[DL18]{DovLor18}
Mirko D'Ovidio and Paola Loreti.
\newblock Solutions of fractional logistic equations by {E}uler's numbers.
\newblock {\em Phys. A}, 506:1081--1092, 2018.

\bibitem[ES13]{elsaka2013MSL}
HAA El-Saka.
\newblock The fractional-order sir and sirs epidemic models with variable
  population size.
\newblock {\em Math. Sci. Lett.}, 2(3):195, 2013.

\bibitem[GLM20]{giga2019AA}
Yoshikazu Giga, Qing Liu, and Hiroyoshi Mitake.
\newblock On a discrete scheme for time fractional fully nonlinear evolution
  equations.
\newblock {\em Asymptotic Analysis}, 120(1-2):151--162, 2020.

\bibitem[HA20]{higazy2020AEJ}
M~Higazy and Maryam~Ahmed Alyami.
\newblock New caputo-fabrizio fractional order seiasqeqhr model for covid-19
  epidemic transmission with genetic algorithm based control strategy.
\newblock {\em Alexandria Engineering Journal}, 59(6):4719--4736, 2020.

\bibitem[Het89]{Hethcote}
Herbert~W. Hethcote.
\newblock Three basic epidemiological models.
\newblock In {\em Applied mathematical ecology ({T}rieste, 1986)}, volume~18 of
  {\em Biomathematics}, pages 119--144. Springer, Berlin, 1989.

\bibitem[HOEK18]{hassouna2018CSF}
M.~Hassouna, A.~Ouhadan, and E.~H. El~Kinani.
\newblock On the solution of fractional order {SIS} epidemic model.
\newblock {\em Chaos Solitons Fractals}, 117:168--174, 2018.

\bibitem[KM27]{KerMcK}
William~Ogilvy Kermack and Anderson~G McKendrick.
\newblock A contribution to the mathematical theory of epidemics.
\newblock {\em P. R. Soc. Lond. A}, 115(772):700--721, 1927.

\bibitem[KSK{\etalchar{+}}21]{khan2021AIMS}
Sajjad~Ali Khan, Kamal Shah, Poom Kumam, Aly Seadawy, Gul Zaman, and Zahir
  Shah.
\newblock {Study of mathematical model of Hepatitis B under Caputo-Fabrizo
  derivative}.
\newblock {\em {AIMS MATHEMATICS}}, {6}({1}):{195--209}, {2021}.

\bibitem[LN15]{nieto}
Jorge Losada and Juan~J Nieto.
\newblock Properties of a new fractional derivative without singular kernel.
\newblock {\em Progr. Fract. Differ. Appl}, 1(2):87--92, 2015.

\bibitem[LWLT19]{li2019NA}
T.~Li, Y.~Wang, F.~Liu, and I.~Turner.
\newblock Novel parameter estimation techniques for a multi-term fractional
  dynamical epidemic model of dengue fever.
\newblock {\em Numer. Algorithms}, 82(4):1467--1495, 2019.

\bibitem[MSK19]{moore2019ADE}
Elvin~J. Moore, Sekson Sirisubtawee, and Sanoe Koonprasert.
\newblock A {C}aputo-{F}abrizio fractional differential equation model for
  {HIV}/{AIDS} with treatment compartment.
\newblock {\em Adv. Difference Equ.}, pages Paper No. 200, 20, 2019.

\bibitem[UKF{\etalchar{+}}20]{ullah2020DCDSS}
Saif Ullah, Muhammad~Altaf Khan, Muhammad Farooq, Zakia Hammouch, and Dumitru
  Baleanu.
\newblock A fractional model for the dynamics of tuberculosis infection using
  {C}aputo-{F}abrizio derivative.
\newblock {\em Discrete Contin. Dyn. Syst. Ser. S}, 13(3):975--993, 2020.

\bibitem[Ver38]{v1}
Pierre-Fran{\c{c}}ois Verhulst.
\newblock Notice sur la loi que la population suit dans son accroissement.
\newblock {\em Corresp. Math. Phys.}, 10:113--126, 1838.

\bibitem[Ver45]{v2}
Pierre-Fran{\c{c}}ios Verhulst.
\newblock Recherches math{\'e}matiques sur la loi d'accroissement de la
  population.
\newblock {\em Journal des {\'e}conomistes}, 12:276, 1845.

\bibitem[Ver47]{v3}
Pierre-Fran{\c{c}}ois Verhulst.
\newblock Deuxi{\`e}me m{\'e}moire sur la loi d'accroissement de la population.
\newblock {\em M{\'e}moires de l'acad{\'e}mie royale des sciences, des lettres
  et des beaux-arts de Belgique}, 20:1--32, 1847.

\end{thebibliography}
\end{document}